\newcommand{\bm}[1]{{\boldsymbol{#1}}}
\newcommand{\bmat}[1]{{\begin{bmatrix}#1\end{bmatrix}}}
\newcommand{\mat}[1]{\bm{#1}}
\theoremstyle{plain}
\newtheorem{thm}{Theorem}[section]
\newtheorem{lem}[thm]{Lemma}
\theoremstyle{definition}
\newtheorem{defn}[thm]{Definition}
\theoremstyle{remark}
\newtheorem{rmrk}[thm]{Remark}
\DeclareMathOperator{\supp}{supp}
\DeclareMathOperator{\Expe}{\mathbb{E}}
\begin{document}

\title{Correcting for unknown errors in sparse high-dimensional function approximation}



\author{Ben Adcock\footnote{Simon Fraser University, Burnaby, BC, Canada. e-mail: ben\_adcock@sfu.ca}, Anyi Bao\footnote{University of British Columbia, Vancouver, BC, Canada. e-mail: abao@math.ubc.ca},  and Simone Brugiapaglia\footnote{Simon Fraser University, Burnaby, BC, Canada. e-mail: simone\_brugiapaglia@sfu.ca }}




\maketitle

\begin{abstract}
We consider sparsity-based techniques for the approximation of high-dimensional functions from random pointwise evaluations. To date, almost all the works published in this field contain some \emph{a priori} assumptions about the error corrupting the samples that are hard to verify in practice. In this paper, we instead  focus on the scenario where the error is unknown. We study the performance of four sparsity-promoting optimization problems: weighted quadratically-constrained basis pursuit, weighted LASSO, weighted square-root LASSO, and  weighted LAD-LASSO. From the theoretical perspective, we prove uniform recovery guarantees for these decoders, deriving recipes for the optimal choice of the respective tuning parameters. On the numerical side, we compare them in the pure function approximation case and in applications to uncertainty quantification of ODEs and PDEs with random inputs.  Our main conclusion is that the lesser-known square-root LASSO is better suited for high-dimensional approximation than the other procedures in the case of bounded noise, since it avoids (both theoretically and numerically) the need for parameter tuning.
\end{abstract}


\section{Introduction}

\label{sec:intro}

Sparse regularization has been recently proved to be a useful tool for the approximation of functions defined over $d$-dimensional domains, due to its ability to lessen the curse of dimensionality when combined with compressed sensing principles. This approach is based on three main elements: sparse approximation of the function with respect to orthogonal polynomials, random pointwise sampling, and sparse recovery via a (weighted) $\ell^1$ minimization decoder. Combining these three ingredients, it is possible to construct approximations
 using a number of pointwise evaluations that depends only logarithmically on $d$ \cite{Adcock2017c,Adcock2017,Chkifa2017}. This feature is particularly appealing in uncertainty quantification applications, where the function to approximate is a quantity of interest of a parametric PDE   \cite{Peng2014,Yang2013}.

This paper concerns the effective treatment of errors in the data by sparse recovery procedures.  There are three primary sources of such error:
\begin{enumerate}
\item[(i)] \textit{Truncation error.} This occurs when infinite expansion in an orthonormal polynomial basis is replaced by a finite expansion. This is an intrinsic source of error in high-dimensional function approximation, which does not depend on the particular application considered. 

\item[(ii)] \textit{Discretization error.} In practice, in applications to parametric PDEs, function samples are computed using a numerical PDE solver.  This error arises due to the discretization in the numerical routine (e.g.\ introducing a finite element mesh or performing a Galerkin projection).

\item[(iii)] \textit{Numerical error.} Following (ii), this further source of error occurs when the discretized problem is solved numerically (e.g.\ via an iterative solver).
\end{enumerate} 

To date, almost all works in sparse regularization for high-dimensional approximation have assumed an \textit{a priori} bound for this error.  Specifically, if the vector of data is $\bm{y} = \bm{y}^{\mathrm{exact}} + \bm{e}$, where $\bm{y}^{\mathrm{exact}}$ is the error-free data and $\bm{e}$ is the vector of errors, then one assumes the bound
\begin{equation}
\label{knownbound}
\|\bm{e}\|_2 \leq \eta,
\end{equation}
for some known $\eta > 0$.  In this case, sparse regularization performed using the (weighted) quadratically-constrained basis pursuit decoder admits rigorous theoretical recovery guarantees \cite{Adcock2017c,Adcock2017,Chkifa2017,Rauhut2016,YanGuoXui_l1UQ}.  In practice, however, a bound of the form \eqref{knownbound} is usually \textit{unknown}, since the sources of error (i), (ii), and (iii) are function dependent.  Cross validation is a common empirical remedy to tune such a parameter \cite{Doostan2011,Jakeman2015,Yang2013}.  Yet this is computationally expensive, and lacks theoretical guarantees. We also observe that for certain model problems (in the context of parametric PDEs) theoretical estimates for $\eta$ are available. However, the presence of implicit constants can create numerical issues, as we will demonstrate in our numerical experiments.

In this paper, we study three different decoders for sparse recovery: weighted quadratically-constrained basis pursuit, weighted LASSO and weighted square-root LASSO.  In all cases, we consider their performance for \textit{unknown errors}: that is, in the absence of the bound \eqref{knownbound}.  These decoders are compared from both the theoretical and numerical viewpoints. 
On the theoretical side, we prove uniform and robust recovery error estimates for the proposed decoders. From the numerical viewpoint, we compare their recovery performances in the case of pure function approximation and in the case of parametric ODEs and PDEs.

One of our main messages is that the lesser-known weighted square-root LASSO is more suitable for sparse high-dimensional function approximation than the commonly-used quadratically-constrained basis pursuit decoder.  It exhibits recovery performances comparable (if not superior) to the others, but the optimal choice of its tuning parameter does not depend on the noise level in the data.  Therefore, it does not require any \emph{a priori} knowledge on the error corrupting the samples. This fact is well-known by statisticians; however, it does not seem to have been fully exploited by the high-dimensional function approximation and the compressed sensing communities.

To complete the paper, we also consider a fourth type of error.  In addition to (i), (ii), and (iii), in large-scale uncertainty quantification implementations it is increasingly common to encounter 
\begin{enumerate}
\item[(iv)] \textit{Corruption error.} This occurs when an unknown subset of the measurements are corrupted arbitrarily, for instance due to a node failure when computations are performed on a cluster. 
\end{enumerate}
This is a fundamentally different type of error to the others.  Corruption errors can be arbitrarily large, but only affect a (typically unknown) fraction of the measurements. They arise in modern complex computational frameworks consisting of many interconnected parallel components, which are subject to  faults. These faults may be hard to detect and heavily pollute the measurements \cite{Bridges2012}.  

In this setting, the vector of errors is of the form $ \bm{e} = \bm{e}^{\mathrm{bounded}} + \bm{e}^{\mathrm{sparse}}$, where $\bm{e}^{\mathrm{bounded}}$ is a combination of (i), (ii), or (iii) and has small norm, and $ \bm{e}^{\mathrm{sparse}}$ is of type (iv) and it is sparse with large norm. Methods that are robust to this type of errors are often called \emph{fault-tolerant} or \emph{resilient} and have already been considered in uncertainty quantification and in compressed sensing \cite{Adcock2017compressed,Laska2009,Shin2016,Wright2010}.  To correct for this type of error, we study a fourth decoder; the weighted LAD-LASSO.  Numerically, we show that this method can achieve quite striking performance; in our experiments, it can easily correct for over 10\% corrupted measurements.  Much like the square-root LASSO, we conclude that this should be the method of choice in the presence of errors (i), (ii), (iii), and (iv).



The paper is organized as follows. In \S\ref{sec:main_contr}, we describe the problem setting, defining the high-dimensional function approximation framework and the four decoders considered for weighted $\ell^1$ minimization; moreover, we present the main contributions of the paper, i.e.\ uniform recovery guarantees for the proposed decoders under unknown error corrupting the measurements. \S\ref{sec:lit} contains a review of the literature for each decoder. In \S\ref{sec:num}, we compare the four decoders from a numerical perspective in the case of pure function approximation and in applications to parametric PDEs and ODEs with random inputs. \S \ref{sec:theory} deals with the theoretical analysis; this is the mathematical core of the paper and it is the most technical part. Finally, in \S\ref{sec:concl} we give our conclusions and state some open problems.

\section{Main contributions}
\label{sec:main_contr}
We now present the main contributions of the paper. We define the sparse high-dimensional approximation methodology in \S\ref{sec:PB}. Then, in \S\ref{sec:lower} we introduce the concept of sparsity in lower sets and other related notions, which are necessary to understand the theoretical analysis. Finally, in \S\ref{sec:rec_err} we state the uniform robust recovery error estimates for the four decoders (proofs of these results are given in \S\ref{sec:theory}).

\subsection{Sparse high-dimensional approximation}
\label{sec:PB}

Let $d \in \mathbb{N} := \{1,2,3,\ldots\}$, with $d \gg 1$, and consider a high-dimensional complex-valued function 
$$
f:D  \to \mathbb{C}, \quad \text{with } D:= (-1,1)^d.
$$ 
We define $d$ probability measures $\nu^{(1)},\ldots,\nu^{(d)}$ on $(-1,1)$ and  the corresponding tensor product measure on $D$ as 
$$
\nu := \nu^{(1)} \otimes \cdots \otimes \nu^{(d)}.
$$
Moreover, we consider $d$ orthonormal bases $\{\phi_j^{(\ell)}\}_{j\in\mathbb{N}_0}$ of $L^2_{\nu^{(\ell)}}(-1,1)$ for $\ell \in [d]$, where $[d]:=\{1,\ldots,d\}$ and $\mathbb{N}_0:=\{0\}\cup \mathbb{N}$. Then, we build $\{\phi_{\bm{i}}\}_{\bm{i}\in\mathbb{N}^d_0}$ as the corresponding tensor product basis of $L^2_\nu(D)$:
\begin{equation}
\label{eq:defbasis}
\phi_{\bm{i}} := \phi_{i_1}^{(1)} \otimes \cdots \otimes \phi_{i_d}^{(d)}, \quad \forall \bm{i} \in \mathbb{N}_0^d.
\end{equation}
Assume $f \in L^2_\nu(D) \cap L^{\infty}(D)$ and consider its expansion  
\begin{equation}
\label{eq:fexpansion}
f =  \sum_{\bm{i} \in \mathbb{N}_0^d} x_{\bm{i}} \phi_{\bm{i}},
\end{equation}
where $x_{\bm{i}} = \langle f, \phi_{\bm{i}}\rangle_{L^2_\nu}$, for every $\bm{i}\in\mathbb{N}_0^d$. Throughout the paper, we define the sequence of coefficients
$$
\bm{x} := (x_{\bm{i}})_{\bm{i}\in\mathbb{N}_0^d} \in \ell^2(\mathbb{N}_0^d).
$$
where $\ell^p(\mathbb{N}_0^d)$ denotes the space of sequences with finite $\ell^p$ norm. Given a subset $\Lambda \subseteq \mathbb{N}_0^d$ of finite cardinality $|\Lambda| = n$, we obtain a finite approximation of $f$ by considering the truncated basis $\{\phi_{\bm{i}}\}_{\bm{i}\in \Lambda}$. The corresponding finite-dimensional vector of coefficients is denoted by $\bm{x}_\Lambda$. Assuming the set of multi-indices in $\Lambda$ is ordered as $\bm{i}_1, \ldots, \bm{i}_n$, we can also write
\begin{equation}
\label{eq:fLambda}
f_{\Lambda} := \sum_{j = 1}^n x_{\bm{i}_j} \phi_{\bm{i}_j} \quad \text{and} \quad \bm{x}_\Lambda := (x_{\bm{i}_j})_{j \in [n]},
\end{equation}
The vector $\bm{x}_\Lambda$ will be equivalently considered as an element of $\mathbb{C}^n$ or of $\ell^{2}(\mathbb{N}_0^d)$. In the latter case, the components $x_{\bm{i}}$ with $\bm{i} \notin\Lambda$ are implicitly assumed to be zero.

We aim at recovering a sparse approximation of $f$ with respect to the basis $\{\phi_{\bm{i}}\}_{\bm{i}\in\Lambda}$ from pointwise samples. Hence, we define the \emph{vector of samples} $\bm{y}\in\mathbb{C}^m$ and the \emph{design matrix} $\mat{A} \in \mathbb{C}^{m \times n}$ as\footnote{The factor $1/\sqrt{m}$ is needed in order to guarantee the restricted isometry property for the design matrix $\mat{A}$. See \S\ref{sec:theory}.}
\begin{equation}
\label{eq:defA}
y_i := \frac{1}{\sqrt{m}} (f(\bm{t}_{i})),
\quad 
A_{ij} := \frac{1}{\sqrt{m}}(\phi_{\bm{i}_j}(\bm{t}_i)), \quad \forall i \in [m], \; j \in[n],
\end{equation}
where, as in previous works in high-dimensional polynomial approximation \cite{Chkifa2017,Doostan2011,Yang2013}, the samples are distributed independently at random according to the orthogonality measure $\nu$, i.e.
\begin{equation}
\label{eq:rand_samples}
\bm{t}_1,\ldots,\bm{t}_m \stackrel{\text{i.i.d.}}{\sim} \nu(\bm{t}).
\end{equation}
Then, the truncated vector of coefficients $\bm{x}_\Lambda \in \mathbb{C}^n$ satisfies 
\begin{equation}
\label{eq:linsys}
\bm{y} = \mat{A} \bm{x}_{\Lambda} + \bm{e},
\end{equation}
where $\bm{e} \in \mathbb{C}^m$ is an unknown source of error corrupting the samples. The vector $\bm{e}$ contains (at the very least) the truncation error
\begin{equation}
\label{eq:deftrunc}
(e^{\text{trunc}}_\Lambda)_i
:=\frac{1}{\sqrt{m}} (f(\bm{t}_i) - f_\Lambda(\bm{t}_i)) 
= \frac{1}{\sqrt{m}}\sum_{\bm{i} \notin \Lambda} x_{\bm{i}} \phi_{\bm{i}}(\bm{t}_i), \quad \forall i \in [m].
\end{equation}
In general, $\bm{e} = \bm{e}^{\text{trunc}}_\Lambda + \bm{e}^{\text{misc}}$, where $\bm{e}^{\text{misc}}$  contains other sources of error, such as discretization error, numerical error, faults, or other forms of noise, all of which occur commonly in uncertainty quantification applications.  In this paper we shall consider the following two types of noise models:
\begin{enumerate}
\item[(a)] \textit{Bounded noise.}  Here $\| \bm{e} \|_2$ is assumed to be small and the goal is to recover $f$ up to this error.  This is the case for truncation and numerical errors (types (i), (ii), and (iii) in \S \ref{sec:intro}), for instance.
\item[(b)] \textit{Bounded + unbounded sparse noise.}  In this case, $\bm{e} = \bm{e}^{\text{bounded}} + \bm{e}^{\text{sparse}}$, where $\bm{e}^{\text{bounded}} $ is as in (a) and $\bm{e}^{\text{sparse}}$ has only a small number of nonzero entries, but which can be arbitrarily large and whose locations are unknown.  This is the case when corruption errors, due for instance to solver faults, are additionally present (type (iv) in \S \ref{sec:intro}).  The goal in this case is to correct for the corruption error, and to recover $f$ up to $\| \bm{e}^{\text{bounded}}\|_2$.
\end{enumerate}
Note that, since the function to be approximated is high-dimensional, the cardinality $n$ of the truncated index set $\Lambda$ is typically large. Therefore, we focus on the regime $m < n$, where \eqref{eq:linsys} is underdetermined.

Finally, we will consider two particular examples of orthonormal systems of  $L^2_\nu(D)$: the well-known tensorized Legendre and Chebyshev polynomials. 
In $d$ dimensions, these are orthogonal with respect to the tensor uniform and Chebyshev measures
$$
\text{d}\nu = \frac{1}{2^d} \text{d}\bm{t} \quad\text{(Legendre)}, \qquad 
\text{d}\nu = \prod_{\ell = 1}^d\frac{1}{\pi\sqrt{1-t_\ell^2}} \text{d}\bm{t} \quad \text{(Chebyshev)}.
$$
In each case, the samples $\bm{t}_1,\ldots,\bm{t}_m$ are randomly and independently distributed according to $\nu$.

\subsection{Decoders}
In order to recover an approximate solution $\hat{\bm{x}}_\Lambda = (\hat{x}_{\bm{i}_j})_{j \in[n]} \in \mathbb{C}^n$ from  \eqref{eq:linsys}, we use weighted $\ell^1$ minimization. Given a sequence of \emph{positive} weights $\bm{w}  = (w_{\bm{i}})_{\bm{i} \in \mathbb{N}_0^d}$, the corresponding weighted $\ell^p$ norm, referred to as the $\ell^p_{\bm{w}}$ norm, is defined as 
$$
\|\bm{x}\|_{p,\bm{w}} 
:= \bigg(\sum_{\bm{i} \in \mathbb{N}_0^d} 
w_{\bm{i}}^{2-p} |x_{\bm{i}}|^p\bigg)^{1/p},
$$
and $\ell^p_{\bm{w}}(\mathbb{N}_0^d)$ is the space of sequences having  finite $\ell_{\bm{w}}^p$ norm. 

Given weights $\bm{u}$ and errors of the form (i) and (ii), and (iii) (see \S \ref{sec:intro}), we compute $\hat{\bm{x}}_\Lambda$ with one of the following decoders: 
\\
\\
\emph{Weighted Quadratically-Constrained Basis Pursuit (WQCBP)}
\begin{equation}
\label{eq:WQCBP}
\hat{\bm{x}}_{\Lambda} 
= \arg \min_{\bm{z} \in \mathbb{C}^n} \|\bm{z}\|_{1,\bm{u}} \text{ s.t. } \|\mat{A} \bm{z} - \bm{y}\|_2 \leq \eta,
\end{equation}
\emph{Weighted LASSO (WLASSO)}
\begin{equation}
\label{eq:WLASSO}
\hat{\bm{x}}_{\Lambda} 
= \arg\min_{\bm{z} \in \mathbb{C}^n} \|\bm{z}\|_{1,\bm{u}} + \lambda \|\mat{A} \bm{z} - \bm{y}\|_2^2,
\end{equation}
\emph{Weighted Square-Root LASSO (WSR-LASSO)}
\begin{equation}
\label{eq:WSRLASSO}
\hat{\bm{x}}_{\Lambda} 
= \arg\min_{\bm{z} \in \mathbb{C}^n} \|\bm{z}\|_{1,\bm{u}} + \lambda \|\mat{A} \bm{z} - \bm{y}\|_2, 
\end{equation}
The above decoders are designed to deal specifically with bounded noise (model (a) in \S \ref{sec:PB}).  
Conversely, if the measurements additionally contain corruption errors (model (b) in \S \ref{sec:PB}), in which case $\bm{e}$ has a small number of large entries, then we instead consider the following decoder: 
\\
\\
\emph{Weighted LAD-LASSO (WLAD-LASSO)}
\begin{equation}
\label{eq:WLADLASSO}
\hat{\bm{x}}_{\Lambda} 
= \arg\min_{\bm{z} \in \mathbb{C}^n} \|\bm{z}\|_{1,\bm{u}} + \lambda \|\mat{A} \bm{z} - \bm{y}\|_{1}.
\end{equation}
The use of the $\ell^1$-norm in the fitting term is to exploit the sparsity of the corruptions in this case.

Notice that the minimizer $\hat{\bm{x}}_\Lambda$ need not be necessarily unique in our framework. Given $\hat{\bm{x}}_\Lambda$, the resulting approximation $\tilde{f}$ of $f$ is then defined by
\begin{equation}
\label{eq:ftilde}
\tilde{f} := \sum_{\bm{i} \in  \Lambda} \hat{x}_{\bm{i}} \phi_{\bm{i}}.
\end{equation}

We assume throughout that $\eta \geq 0$ and $\lambda >0$. These will be referred to as \emph{tuning parameters}. When $\eta = 0$, the optimization program \eqref{eq:WQCBP} is also referred to as \emph{weighted basis pursuit (WBP)}. It is worth noting that WQCBP is the only optimization program in constrained form.  We will comment more on these decoders while reviewing the literature (\S\ref{sec:lit}).

As for the choice of the weights, $\bm{u}$ is chosen as follows:
\begin{equation}
\label{eq:defintrinsic}
u_{\bm{i}} = \|\phi_{\bm{i}}\|_{L^\infty}, \quad \forall \bm{i} \in \mathbb{N}_0^d.
\end{equation} 
They are referred to as \emph{intrinsic weights} \cite{Adcock2017c}. Notice that $u_{\bm{i}} := \|\phi_{\bm{i}}\|_{L^\infty} \geq \|\phi_{\bm{i}}\|_{L^2_\nu} = 1$.
This particular choice of weights has been proved to be effective for WQCBP, both theoretically and numerically \cite{Adcock2017c,Adcock2017,Chkifa2017}. For the polynomial bases employed here, we have
\begin{equation}
\label{eq:intrinsicChebLeg}
 u_{\bm{i}} = \prod_{\ell = 1}^d \sqrt{2 i_\ell +1} \quad \text{(Legendre)}, \qquad 
 u_{\bm{i}} = 2^{\|\bm{i}\|_0/2} \quad\text{(Chebyshev)}.
\end{equation}

\subsection{Sparsity in lower sets}
\label{sec:lower}

Compressed sensing theory considers the recovery of sparse vectors from limited numbers of measurements.  Many works have sought to apply these principles to high-dimensional approximation (see \S \ref{sec:lit}).  Unfortunately, standard application of these principles leads to sample complexities that depend exponentially on the dimension $d$ (see, for example, \cite{Adcock2017}).  More recently \cite{Adcock2017c,Chkifa2017}, it has been shown that this \textit{curse of dimensionality in the sample complexity} can be overcome by considering certain structured sparsity models based on so-called \emph{lower sets} (also known as \emph{downward closed} {or \emph{monotone sets}):

\begin{defn}[Lower set]
The set $S \subseteq \mathbb{N}_0^d$ is said to be a \emph{lower set} if whenever $\bm{i} \in S$ and $\bm{j} \leq \bm{i}$ (where the inequality holds componentwise) then $\bm{j} \in S$.
%
%
\end{defn}
Employing this definition, we consider the space of $s$-sparse vectors whose supports are lower sets
$$
\Sigma_{s,L} := \{\bm{z} \in \ell^2(\mathbb{N}_0^d) : |\supp(\bm{z})| \leq s, \; \supp(\bm{z}) \text{ lower}\}.
$$
Here $\supp(\bm{z}):=\{\bm{i} \in \mathbb{N}_0^d: z_{\bm{i}} \neq 0\}$.  The corresponding best $s$-term approximation in lower sets is
$$
\sigma_{s,L}(\bm{z})_{1,\bm{u}} := 
\inf_{\bm{z}' \in \Sigma_{s,L}} \|\bm{z}-\bm{z}'\|_{1,\bm{u}}.
$$
Lower sets are common tools in high-dimensional approximation \cite{Chkifa2014,deBoor1990,Dyn2014,Lorentz1986,Migliorati2014}. The lower set structure turns out to be crucial when approximating (a quantity of interest of) the solution map of a parametric PDE.  For example, in \cite{Chkifa2015} it has been proved that for a wide class of parametric PDEs the best $s$-term approximation error (in the mean square and in the uniform sense) in lower sets of the solution map with respect to Legendre polynomials decays  algebraically in $s$ and it is independent of the dimension $d$ of the parametric domain (see also \cite{Cohen2010,Cohen2011}). 

An essential property of lower sets is that the union of all lower sets of cardinality $s$ is the well-known \emph{hyperbolic cross} of order $s$,  namely
\begin{equation}
\label{eq:defHC}
\Lambda^{\text{HC}}_{d,s} 
:= \left\{\bm{i} = (i_1,\ldots,i_d) \in \mathbb{N}_0^d : \prod_{\ell = 1}^d (i_\ell +1) \leq s\right\}
\equiv\bigcup_{\substack{|S|\leq s \\ S \text{ lower}}} S. 
\end{equation}
From now on, this set will be adopted as our truncated multi-index set, i.e.\
$$
\Lambda = \Lambda^{\text{HC}}_{d,s}. 
$$
We note that the cardinality of this index set can be bounded from above as (see, for example, \cite{Adcock2017})
\begin{equation}
\label{eq:HCbound}
n = |\Lambda^{\text{HC}}_{d,s}| 
\leq \min\left\{ 2 s^3 4^d, \text{e}^2 s^{2 + \log_2(d)}\right\}.
\end{equation}
With this in hand, we also define the \emph{intrinsic lower sparsity} of order $s$ as the quantity
\begin{equation}
\label{eq:defK(s)}
K(s) := \max\left\{|S|_{\bm{u}} : S \subseteq \mathbb{N}_0^d, \; |S|\leq s , \; S \text{ lower}\right\},
\end{equation}
where 
\begin{equation}
\label{eq:def|S|_u}
|S|_{\bm{u}}:=\sum_{\bm{i} \in S} u_{\bm{i}}^2
\end{equation} 
is the \emph{weighted cardinality} of a subset $S$ with respect to the weights $\bm{u}$ \cite{Rauhut2016}. This quantity is of crucial importance to our results, since it will determine a sufficient condition on the sample complexity $m$. In the case of Chebyshev and Legendre polynomials, $K(s)$ is known to scale proportionally to $s^{\gamma}$, with
\begin{equation}
\label{eq:defgamma}
\gamma = 
2 \quad \text{(Legendre)},\qquad
\gamma = \ln(3)/\ln(2) \quad \text{(Chebyshev)}.
\end{equation}
Specifically, (see \cite[Lemma 3.7]{Chkifa2017}) we have the following:
\begin{lem}[Intrinsic lower-sparsity bounds]
\label{lem:K(s)bounds}
Let $2 \leq s \leq 2^{d+1}$. If $\{\phi_{\bm{i}}\}_{\bm{i} \in \mathbb{N}_0^d}$ is the tensor Legendre or Chebyshev basis, then
\begin{equation}
 s^\gamma/4 \leq K(s) \leq s^\gamma,
\end{equation}
where $K(s)$ and $\gamma$ are as in \eqref{eq:defK(s)} and \eqref{eq:defgamma} respectively. Moreover, the upper estimate holds for all $s \geq 2$.
\end{lem}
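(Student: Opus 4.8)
The plan is to treat both bases simultaneously by exploiting the product structure of the intrinsic weights. By \eqref{eq:intrinsicChebLeg}, in each case $u_{\bm{i}}^2 = \prod_{\ell=1}^d w(i_\ell)$ for a nondecreasing scalar weight $w:\mathbb{N}_0\to[1,\infty)$ with $w(0)=1$, namely $w(j)=2j+1$ (Legendre) and $w(0)=1,\ w(j)=2$ for $j\ge1$ (Chebyshev). The relevant exponent is, in both cases, the smallest $\gamma$ with $\sum_{i=0}^{k-1}w(i)\le k^\gamma$ for all integers $k\ge1$: this gives $\gamma=2$ for Legendre (an equality $k^2=k^2$) and $\gamma=\log_2 3$ for Chebyshev, where the binding case is $k=2$ since $2k-1\le k^{\log_2 3}$ is tight there. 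Since $K(s)$ is a maximum of $|S|_{\bm{u}}$ over lower sets with $|S|\le s$ and $t\mapsto t^\gamma$ is increasing, both bounds follow once I prove (i) the uniform estimate $|S|_{\bm{u}}\le|S|^\gamma$ for \emph{every} lower set $S$, and (ii) the existence of a single lower set of cardinality at most $s$ with $|S|_{\bm{u}}\ge s^\gamma/4$. Step (i) is the crux.

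For (i) I would induct on the dimension $d$. The base case $d=1$ is exactly the defining property of $\gamma$ recalled above. For the inductive step I slice $S$ along the last coordinate: with $S_j:=\{\bm{i}'\in\mathbb{N}_0^{d-1}:(\bm{i}',j)\in S\}$ and $m_j:=|S_j|$, the lower-set property forces each $S_j$ to be a lower set in $\mathbb{N}_0^{d-1}$ and the slices to be nested, so $m_0\ge m_1\ge\cdots\ge0$. Factoring the last coordinate out of the weights and applying the induction hypothesis to each slice yields $|S|_{\bm{u}}=\sum_j w(j)\,|S_j|_{\bm{u}}\le\sum_j w(j)\,m_j^\gamma$, so it remains to establish the scalar inequality $\sum_{j\ge0}w(j)\,t_j^\gamma\le\big(\sum_{j\ge0}t_j\big)^\gamma$ for every nonincreasing sequence $t_0\ge t_1\ge\cdots\ge0$.

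This scalar inequality is where the exact value of $\gamma$ is consumed, and I would prove it by telescoping the partial sums $T_j:=t_0+\cdots+t_j$. Writing $\big(\sum_j t_j\big)^\gamma=t_0^\gamma+\sum_{j\ge1}(T_j^\gamma-T_{j-1}^\gamma)$, it suffices to verify the pointwise bound $T_j^\gamma-T_{j-1}^\gamma\ge w(j)\,t_j^\gamma$ for each $j\ge1$. Using $T_{j-1}\ge j\,t_j$ (a sum of $j$ terms each at least $t_j$), this reduces to a one-variable estimate: for Legendre, $T_j^2-T_{j-1}^2=(2T_{j-1}+t_j)t_j\ge(2j+1)t_j^2$; for Chebyshev, the map $a\mapsto(a+b)^\gamma-a^\gamma$ is increasing for $\gamma>1$, and at $a=b$ it equals $(2^\gamma-1)b^\gamma=2b^\gamma$ precisely because $2^{\log_2 3}=3$, whence $(T_{j-1}+t_j)^\gamma-T_{j-1}^\gamma\ge 2t_j^\gamma=w(j)t_j^\gamma$. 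Summing the telescope gives (i). I expect this scalar step, and in particular making transparent why $\gamma=\log_2 3$ is exactly the exponent making the Chebyshev estimate tight, to be the main technical obstacle.

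For the lower bound (ii) I take the anisotropic cube $S=\{0,1\}^p\times\{0\}^{d-p}$, a lower set of cardinality $2^p$ with $|S|_{\bm{u}}=\prod_{\ell=1}^p(w(0)+w(1))=(1+w(1))^p$; this is $4^p=|S|^2$ for Legendre and $3^p=|S|^{\log_2 3}$ for Chebyshev, i.e.\ $|S|_{\bm{u}}=|S|^\gamma$ exactly. The hypothesis $2\le s\le 2^{d+1}$ guarantees an integer $p\le d$ with $2^p\in[s/2,s]$ (take $p=\min\{\lfloor\log_2 s\rfloor,d\}$), so $|S|\le s$ and $|S|_{\bm{u}}=|S|^\gamma\ge(s/2)^\gamma=s^\gamma/2^\gamma\ge s^\gamma/4$, as $2^\gamma\le4$ in both cases. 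This cube is the only place the constraint $s\le2^{d+1}$ is needed — enough coordinates must be available to realize a lower set of cardinality comparable to $s$ — which matches the stated fact that the upper estimate in (i) holds for all $s\ge2$.
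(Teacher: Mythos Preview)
The paper does not give its own proof of this lemma; it simply cites \cite[Lemma~3.7]{Chkifa2017}. Your argument is correct and is essentially the standard one found in that reference: the upper bound by induction on $d$ via slicing along one coordinate, reduced to the scalar telescoping inequality $\sum_{j\ge0} w(j)\,t_j^\gamma \le \big(\sum_{j\ge0} t_j\big)^\gamma$ for nonincreasing $(t_j)$, and the lower bound via the dyadic cube $\{0,1\}^p\times\{0\}^{d-p}$ with $2^p\in[s/2,s]$.
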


\subsection{Robust recovery guarantees under unknown error}
\label{sec:rec_err}

We now present the recovery guarantees for the decoders \eqref{eq:WQCBP}-\eqref{eq:WLADLASSO}. These estimates are \emph{robust}, in the sense that they admit the presence of unknown error $\bm{e}$ corrupting the samples, and \emph{uniform} since, given a set of random pointwise samples distributed according to \eqref{eq:rand_samples}, they hold uniformly in $f$ (with high probability). The proofs of these results are given in \S\ref{sec:theory}. 

As usual, the expression $X \lesssim Y$ means that there exists a constant $C$ independent of $X$ and $Y$ such that $X \leq C Y$. The notation $X \gtrsim Y$ is defined analogously and $X \asymp Y$ means that $X \lesssim Y$ and $X \gtrsim Y$ hold simultaneously.

In all cases (apart from WLAD-LASSO, where an additional constraint appears), uniform recovery with probability at least $1-\varepsilon$ holds, provided the sample complexity satisfies
\begin{equation}
\label{eq:samplecomplexity}
m \gtrsim s^{\gamma} \cdot L,
\end{equation}
where $\gamma$ is given by \eqref{eq:defgamma} and $L$ is a polylogarithmic factor defined as
\begin{equation}
\label{eq:polylogsimple}
L = L(s,\varepsilon) =  
\ln^2(s) \min\{\ln(s) + d, \ln(2d)\ln(s)\} + \ln(s)\ln(s/\varepsilon).
\end{equation}
Note that the particular dependence on $d$ in this estimates stems from bound \eqref{eq:HCbound} on the cardinality $n$ of the hyperbolic cross.

We now present the recovery guarantees decoder by decoder. 

\paragraph{WQCBP} Theorems~\ref{thm:WQCBPerror-blind} \& \ref{thm:WQCBPtailbound} imply that, provided $m \asymp s^{\gamma} L$, the approximate solution $\tilde{f}$ defined in \eqref{eq:ftilde} obtained via WQCBP satisfies
\begin{align*}
\|f-\tilde{f}\|_{L^\infty}
& \lesssim \sigma_{s,L}(\bm{x})_{1,\bm{u}}
+ s^{\gamma/2} [(\eta + \|\bm{e}\|_2) + \mathcal{Q}\sqrt{L}\max\{\|\bm{e}\|_2-\eta,0\}],\\
\|f-\tilde{f}\|_{L^2_\nu}
& \lesssim \frac{\sigma_{s,L}(\bm{x})_{1,\bm{u}}}{s^{\gamma/2}}
+ \eta + \|\bm{e}\|_2  
+ \|f-f_\Lambda\|_{L^2_\nu} 
+ \mathcal{Q}\sqrt{L}\max\{\|\bm{e}\|_2-\eta,0\},
\end{align*}
with probability at least $1-\varepsilon$, where 
$$
\mathcal{Q}
= \mathcal{Q}(\mat{A},\Lambda,\bm{u})
:=  \sqrt{\frac{|\Lambda|_{\bm{u}}}{n}}\frac{1}{\sigma_m(\sqrt{\frac{m}{n}}\mat{A}^*)},
$$
where $\sigma_m(\sqrt{\frac{m}{n}}\mat{A}^*)$ is the $m^{th}$ singular value (in decreasing order) of $\sqrt{\frac{m}{n}}\mat{A}^*$.
Notice that the term depending on $\mathcal{Q} \sqrt{L}$ in the error bound vanishes when $\eta \geq \| \bm{e} \|_2$: that is,
$$
\eta \geq \| \bm{e} \|_2 \quad \Rightarrow \quad \|f-\tilde{f}\|_{L^2_\nu} \lesssim \frac{\sigma_{s,L}(\bm{x})_{1,\bm{u}}}{s^{\gamma/2}}
+ \eta + \|\bm{e}\|_2 + \|f-f_\Lambda\|_{L^2_\nu},
$$
and similarly for the $L^\infty$ error.  Moreover, since $\eta$ and $\| \bm{e} \|_2$ both appear in the error bound, the optimal choice of $\eta$ is 
\begin{equation}
\label{eq:optimaleta}
\eta = \|\bm{e}\|_2.
\end{equation}
Conversely, when $\| \bm{e} \|_2$ is unknown, the price to pay is an additional error term, which depends logarithmically on $d$ and algebraically on $s$. In particular, the algebraic dependence on $s$ is a consequence of Theorem~\ref{thm:WQCBPtailbound}, where we show that $\mathcal{Q} \lesssim s^{\alpha/2}/\sigma_m(\sqrt{\frac{m}{n}\mat{A}^*})$, with $\alpha = 1,2$ for tensor Chebyshev and Legendre polynomials, respectively.   

This bound does, however, suggest that tuning $\eta$ empirically via cross validation, so as to achieve $\eta \approx \| \bm{e} \|_2$ can improve the recovery error.  Our numerical results in \S \ref{sec:num} partially support this conclusion. 


\paragraph{WLASSO} In Theorem~\ref{thm:WLASSOrecovery}, we show that, provided \eqref{eq:samplecomplexity}  holds and
\begin{equation}
\label{eq:lambdaWLASSO}
\lambda 
\asymp \frac{\sqrt{K(s)}}{\|\bm{e}\|_2} 
\asymp \frac{s^{\gamma/2}}{\|\bm{e}\|_2},
\end{equation}
the approximate solution $\tilde{f}$ computed by means of WLASSO satisfies
\begin{align}
\label{eq:WLASSO_Linfest}
\|f-\tilde{f}\|_{L^\infty}
& \lesssim \sigma_{s,L}(\bm{x})_{1,\bm{u}}
+ s^{\gamma/2} \|\bm{e}\|_2,\\
\label{eq:WLASSO_L2est}
\|f-\tilde{f}\|_{L^2_\nu}
& \lesssim \frac{\sigma_{s,L}(\bm{x})_{1,\bm{u}}}{s^{\gamma/2}}
+ \|\bm{e}\|_2 + \|f-f_\Lambda\|_{L^2_\nu},
\end{align}
with probability at least $1-\varepsilon$.

Unfortunately, the choice of tuning parameter \eqref{eq:lambdaWLASSO} requires knowledge of $\| \bm{e} \|_2$, similar to WQCBP.  On the other hand, this requirement is certainly less stringent than the one-sided bound $\eta \geq \| \bm{e} \|_2$: no logarithmic factors are present in the error bounds if $\| \bm{e} \|_2$ is estimated up to a constant.

\paragraph{WSR-LASSO} Theorem~\ref{thm:WSRLASSOrecovery} shows that provided \eqref{eq:samplecomplexity} holds and
\begin{equation}
\label{eq:lambdaWSRLASSO}
\lambda 
\asymp \sqrt{K(s)} 
\asymp s^{\gamma/2},
\end{equation}
the approximate solution $\tilde{f}$ computed using WSR-LASSO satisfies \eqref{eq:WLASSO_Linfest} and \eqref{eq:WLASSO_L2est} with probability at least $1-\varepsilon$. In other words, WSR-LASSO attains the same recovery guarantees as WLASSO and WQCBP \textit{but without any prior knowledge} on the noise $\bm{e}$.   Specifically, optimal WLASSO tuning parameter  \eqref{eq:lambdaWLASSO} depends on $\|\bm{e}\|_2$, whereas the optimal WSR-LASSO tuning parameter \eqref{eq:lambdaWSRLASSO} is independent of this factor.  We note in passing that $\lambda$ depends on the sparsity $s$, but this is always known in our framework since it is the index of the truncated hyperbolic cross set \eqref{eq:defHC}.


\paragraph{WLAD-LASSO} From Theorem~\ref{thm:WLADLASSOrecovery}, we see that, given $k \in \mathbb{N}$ and provided that
\begin{equation}
\label{corrSC}
m \gtrsim s^{\gamma} \cdot  \max\{L, k\},
\end{equation}
the approximate solution $\tilde{f}$ obtained via WLAD-LASSO satisfies 
$$
\|f-\tilde{f}\|_{L^\infty}
\lesssim \sigma_{s,L}(\bm{x})_{1,\bm{u}}
+ \lambda \sigma_k(\bm{e})_{1}, 
$$
and also
$$
\|f-\tilde{f}\|_{L^2_\nu}
 \lesssim (1+\sqrt{\Theta}) \left(\frac{\sigma_{s,L}(\bm{x})_{1,\bm{u}}}{s^{\gamma/2}} + \frac{\sigma_k(\bm{e})_{1}}{\sqrt{k}}\right)
+ \|f-f_\Lambda\|_{L^2_\nu},
$$
with probability at least $1-\varepsilon$, where
\begin{equation}
\label{eq:defeta}
\Theta:=\frac{\sqrt{K(s) + \lambda^2 k}}{\min\{\sqrt{K(s)}, \lambda\sqrt{k}\}},
\end{equation}
and
$$
\sigma_{k}(\bm{e})_{1}:= \inf_{\bm{d}:|\supp(\bm{d})| \leq k} \|\bm{e}-\bm{d}\|_{1},
$$
where $\supp(\bm{d}):=\{i \in [m]: d_i \neq 0\}$, is the best $k$-term approximation error. Notice that the quantity $\Theta$ defined in \eqref{eq:defeta} is minimized for 
\begin{equation}
\label{eq:lambdaWLADLASSO}
\lambda = \sqrt{\frac{K(s)}{k}}.
\end{equation} 
However,  $\lambda = 1$ seems to be a better choice in practice (see Fig.~\ref{fig:m_vs_err} and numerical illustrations in \cite{Adcock2017compressed}).

Observe that the recovery guarantees for WQCBP, WLASSO and WSR-LASSO all depend on $\| \bm{e} \|_2$.  Conversely, for WLAD-LASSO this term is replaced by $\sigma_{k}(\bm{e})_{1}$.  This suggests WLAD-LASSO, unlike the other decoders, can effectively correct for errors of type (b).  Specifically, if $\bm{e} = \bm{e}^{\text{bounded}} + \bm{e}^{\text{sparse}}$, where $\bm{e}^{\text{sparse}}$ has at most $k$ nonzero entries, then
$$
\sigma_{k}(\bm{e})_{1} 
\leq 
\sigma_{k}(\bm{e}^{\text{sparse}})_{1} 
+ \sigma_{k}(\bm{e}^{\text{bounded}})_{1} 
\leq \| \bm{e}^{\text{bounded}} \|_{1} 
\leq \sqrt{m} \| \bm{e}^{\text{bounded}} \|_2.
$$
In particular, if $\lambda \asymp s^{\gamma/2} / \sqrt{k}$, as suggested by \eqref{eq:lambdaWLADLASSO}, this yields the error bound
\begin{equation}
\label{WLADoptbound}
\|f-\tilde{f}\|_{L^2_\nu}
 \lesssim \frac{\sigma_{s,L}(\bm{x})_{1,\bm{u}}}{s^{\gamma/2}} + s^{\gamma/2} \| \bm{e}^{\text{bounded}} \|_2 + \| f - f_{\Lambda} \|_{L^2_\nu},
\end{equation}
and similarly for the $L^\infty$-norm error.  Up to the factor $s^{\gamma/2}$ (see Remark \ref{sharpness} below) this is the same as the bounds for WLASSO and WSR-LASSO.  Except, of course, that the WLAD-LASSO corrects for an $k$-sparse corruption error of arbitrary magnitude, unlike the other decoders.

\begin{rmrk}[Sharpness of the sample complexity]
\label{sharpness}
The estimate \eqref{corrSC} suggests the number of corrupted samples $k$ can be roughly $L$ while retaining the same sample complexity as in the uncorrupted case.  We do not believe this is sharp.  Indeed, there is reason to expect that $k = c m$ corruptions can be allowed, for some $0 < c <1$.  See \cite{Adcock2017compressed,Li2013} for further discussion.  If this conjecture were true, then the $s^{\gamma/2}$ dependence in the noise term in \eqref{WLADoptbound} would vanish.
\end{rmrk}


\section{Literature review}
\label{sec:lit}

For a general introduction to the optimization programs \eqref{eq:WQCBP}-\eqref{eq:WLADLASSO} in the unweighted case, we refer the reader to, e.g., \cite{Foucart2013,vandeGeer2016,Hastie2015}. We now briefly review the literature regarding each decoder.

\paragraph{WQCBP} This optimization program has a long history \cite{Donoho1992,Logan1965}, but here we are particularly interested in its application in compressed sensing \cite{Candes2006,Donoho2006,Foucart2013}.  The weighted version of QCBP has been introduced in the context of function approximation with bounded orthonormal systems in \cite{Rauhut2016}, although previously studied in \cite{Candes2008b,Friedlander2012,Peng2014,Yu2013}. For applications of WQCBP to high-dimensional function approximation and uncertainty quantification, we refer the reader to \cite{Adcock2017c,Adcock2017b,Adcock2017,Chkifa2017,Peng2014,Rauhut2017,Yang2013} and references therein.
All these works focus on the error-aware scenario, where upper bounds of the form \eqref{knownbound} are assumed to be known \emph{a priori}, even though this assumption is unlikely to be met in practice. It is worth mentioning that robust recovery guarantees for unweighted $\ell^1$ minimization have been recently proved in the error-blind scenario, where upper bounds of the form \eqref{knownbound} are not assumed to be known (see \cite{Brugiapaglia2017,Brugiapaglia2017recovery} and references therein). In this paper, these results are generalized to the weighted case.

\paragraph{WLASSO} The literature regarding LASSO (Least Absolute Shrinkage and Selection Operator) is boundless. The pioneering paper \cite{Tibshirani1996} by R.\ Tibshirani has already reached more than twenty thousand citations according to Google Scholar (roughly one thousand citations per year on average). We refer the reader to \cite[\S 2.10]{Hastie2015} and references therein for an extensive review on the state-of-the-art of LASSO and for historical remarks. A weighted variant of LASSO, called the ``adaptive LASSO'' due to the iterative and adaptive procedure used to update the weights, is considered in \cite{Huang2008,Zou2006}. 

\paragraph{WSR-LASSO} The unweighted version of SR-LASSO has been introduced in \cite{Belloni2011} and studied under the name ``scaled LASSO'' in \cite{Sun2012}. Further results about SR-LASSO can be found in \cite{Babu2014,Belloni2014,Bunea2014,Pham2015,Tian2015}. In \cite{Stucky2015}, the authors consider a version of the SR-LASSO where the $\ell^1$ norm is replaced by a generic sparsity-inducing norm, admitting the weighted SR-LASSO as a particular case. For the statisticians, an attractive feature of SR-LASSO is that the optimal tuning parameter does not depend on the variance of the random noise corrupting the observations, like in the LASSO case. Our study confirms that this is the case also in high-dimensional function approximation.

\paragraph{WLAD-LASSO}  The LAD (Least Absolute Deviation) regression and the LASSO program have been combined into the so-called LAD-LASSO decoder in order to make the LASSO more robust to the presence of heavy-tailed errors or outliers in the response \cite{Gao2008,Gao2010,Wang2007,Xu2005,Xu2010}. In the context of compressed sensing, this decoder is also known as a \emph{fault-tolerant} version of $\ell^1$ minimization. It has been considered in \cite{Laska2009,Li2013,Nguyen2013,Stankovic2014,Studer2012,Su2016,Su2016b,Wright2010} and in {\cite{Adcock2017compressed,Shin2016} with applications to uncertainty quantification. In \cite{Arslan2012}, the author introduces the weighted variant of LAD-LASSO and proves that is has some statistical properties (asymptotic normality and  consistency). An adaptively reweighted version of LAD-LASSO is considered in \cite{Li2017}.

\section{Numerical experiments}
\label{sec:num}
\newcommand\myxor{\mathbin{\char`\^}}

We compare the decoders \eqref{eq:WQCBP}-\eqref{eq:WLADLASSO} from the numerical viewpoint. The section is structured as follows. First, we formally define the cross-validation procedure employed to choose the tuning parameters in \S\ref{sec:cv}. Then, we compare the four decoders by studying their recovery error as a function of the tuning parameter (\S\ref{sec:par_vs_err}) and of the sample complexity (\S\ref{sec:m_vs_err}). 
Finally, we compare the performance of the four decoders in an application to the uncertainty quantification of parametric ODEs and PDEs with random inputs in \S\ref{sec:UQ}.

All the numerical experiments have been performed in \textsc{Matlab$^\text{\textregistered}$} using CVX, an optimization toolbox for solving convex problems \cite{Grant2008,cvx}. We always run CVX setting \texttt{cvx\_precision best} and \texttt{cvx\_solver mosek}. We have used \textsc{Matlab$^\text{\textregistered}$} R2016b version 9.1 64-bit on a MacBook Pro equipped with a 3 GHz Intel Core i7 processor and with 8 GB DDR3 RAM. For the sake of convenience, we will sometimes use the \textsc{Matlab$^\text{\textregistered}$} vector notation to denote objects like $10.\text{$\myxor$}(1:0.5:2) = (10^1,10^{1.5},10^2)$.

\subsection{Cross validation} 
\label{sec:cv}
We define the cross-validation procedure that will be employed in the next sections. The pseudo-code is reported in Algorithm~\ref{alg:cv}. It corresponds to the cross-validation procedure described in \cite[\S 2.3]{Hastie2015}, usually referred to as \emph{$K$-fold cross validation} \cite{Arlot2010}. It is worth noticing  that step~\ref{step:valid_err} of Algorithm~\ref{alg:cv} can be replaced with $\varepsilon(t,g,p) = \|\mat{A}_v \hat{\bm{x}}-\bm{y}_v\|_2$, as it is done in \cite{Doostan2011,Yang2013}. However, we prefer to use the squared residual, in accordance with \cite{Hastie2015}. In addition to the design matrix $\mat{A}$ and the vector of samples $\bm{y}$, the cross-validation procedure takes as input:
\begin{itemize}
 \setlength{\topsep}{0pt}
        \setlength{\parskip}{0pt}
        \setlength{\partopsep}{0pt}
        \setlength{\parsep}{0pt}         
        \setlength{\itemsep}{0pt} 
\item a number $G\in \mathbb{N}$ of sample groups;
\item  a decoder $\Delta$ with tuning parameter $p$ such that $\hat{\bm{x}} = \Delta(\mat{A},\bm{y};p)$ approximately solves $\mat{A}\bm{x} = \bm{y}$;
\item  a finite set $\mathcal{P}$ of parameters;
\item  a number $T \in \mathbb{N}$ of repeated random tests to run.
\end{itemize}
The procedure gives as output a tuning parameter $p_{\text{cv}} \in \mathcal{P}$.

\begin{algorithm}
\caption{\label{alg:cv}Cross-validation procedure}

\begin{algorithmic}[1]

\STATE{\textbf{procedure} $p_{\text{cv}}$ = \textsc{CrossValidation}$(\mat{A},\bm{y},G,\Delta,\mathcal{P} ,T)$;}
\FORALL{$t\in[T]$} 
\STATE{Randomly partition $[m] = I_1 \sqcup \cdots \sqcup I_G$ into $G$ sets of cardinality $\lfloor m/G \rfloor$ or $\lfloor m/G \rfloor +1$;}
\FORALL{$g \in[G]$}
\STATE{$\mat{A}_v = \sqrt{\frac{m}{|I_{g}|}} \, (A_{ij})_{i \in I_{g}, j \in [n]}, \quad \bm{y}_v = \sqrt{\frac{m}{|I_{g}|}} \,(y_i)_{i \in I_{g}}$;}
\STATE{$\mat{A}_r = \sqrt{\frac{m}{m-|I_{g}|}} \, (A_{ij})_{i \in [m]\setminus I_{g},j \in [n]}, \quad \bm{y}_r = \sqrt{\frac{m}{m-|I_{g}|}} \,(y_i)_{i \in [m]\setminus I_{g}}$;}  
\FORALL{$p \in \mathcal{P}$} 
\STATE{$\hat{\bm{x}} = \Delta(\mat{A}_r, \bm{y}_r ; p)$; }
\STATE{$\varepsilon(t,g,p) = \|\mat{A}_v \hat{\bm{x}}-\bm{y}_v\|_2^2$; \label{step:valid_err}}
\ENDFOR
\ENDFOR
\ENDFOR
\STATE{$p_{\text{cv}} = \displaystyle \arg\min_{p \in \mathcal{P}} \frac{1}{T \cdot G }\sum_{t \in [T]} \sum_{g \in[G] } \varepsilon(t,g,p)$. }
\end{algorithmic}
\end{algorithm}

\subsection{Tuning parameter \emph{vs.}\ error} 
\label{sec:par_vs_err}

We compare WQCBP, WLASSO, WSR-LASSO, and WLAD-LASSO by studying the behavior of the recovery error as a function of the tuning parameter when approximating the function 
\begin{equation}
\label{eq:par_vs_err_fun}
f (\bm{t}) = \exp\left(-\frac{1}{d}\sum_{\ell=1}^d\cos(t_\ell)\right), \quad \text{with }d = 15.
\end{equation}
The aim of this experiment is to both compare the performance of the four decoders, and to validate the optimal choices of the tuning parameters suggested by the theory (see \S\ref{sec:rec_err}). 

We consider a sparsity level $s = 10$, corresponding to $n = |\Lambda^{\text{HC}}_{15,10}| = 1432$ and  a number of samples $m = \lceil s^\gamma \log(n)\rceil$, with $\gamma$ defined as in \eqref{eq:defgamma}. In particular, this corresponds to a sample complexity  $m=727$ for Legendre and $m=280$ for Chebyshev polynomials. Then, we repeat the following experiment 50 times: we generate $m$ random samples and corrupt them by random noise $\bm{e} = \beta\bm{n}/\|\bm{n}\|_2$ with $\beta \in\{ 0, 10^{-3}, 10^{-2}, 10^{-1}\}$, where $\bm{n}\in\mathbb{R}^m$ is a random vector with independent entries uniformly distributed over $[-1,1]$, and we solve the resulting system \eqref{eq:linsys} by means of WQCBP, WLASSO, WSR-LASSO, and WLAD-LASSO for each value of the tuning parameter, as specified in Table~\ref{tab:par_range}.\footnote{In practice, for WLAD-LASSO we use $\lambda = 1.01$ instead of $\lambda = 1$ since the choice $\lambda = 1$ leads to the presence of spurious outliers in the box plot. We think that this behavior is due to CVX and not to the decoder itself.}

\begin{table}
\centering
\begin{tabular}{c|c|c|c}
WQCBP ($\eta$) & WLASSO ($\lambda$) & WSR-LASSO ($\lambda$) & WLAD-LASSO ($\lambda$)\\\hline
10.$\myxor(-7 : 0.5 : 1)$ & 10.$\myxor(-1:0.5:8)$ & 10.$\myxor(-2:0.25:5)$ & 10.$\myxor(-2:0.25:3)$\\
\end{tabular}
\caption{\label{tab:par_range}Sets of tuning parameters for WQCBP, WLASSO, WSR-LASSO, and WLAD-LASSO used to generate Figs.~\ref{fig:param_vs_err_Leg} \& \ref{fig:param_vs_err_Che}.}
\end{table}

\begin{figure}[t]
\centering
\includegraphics[width = 7cm]{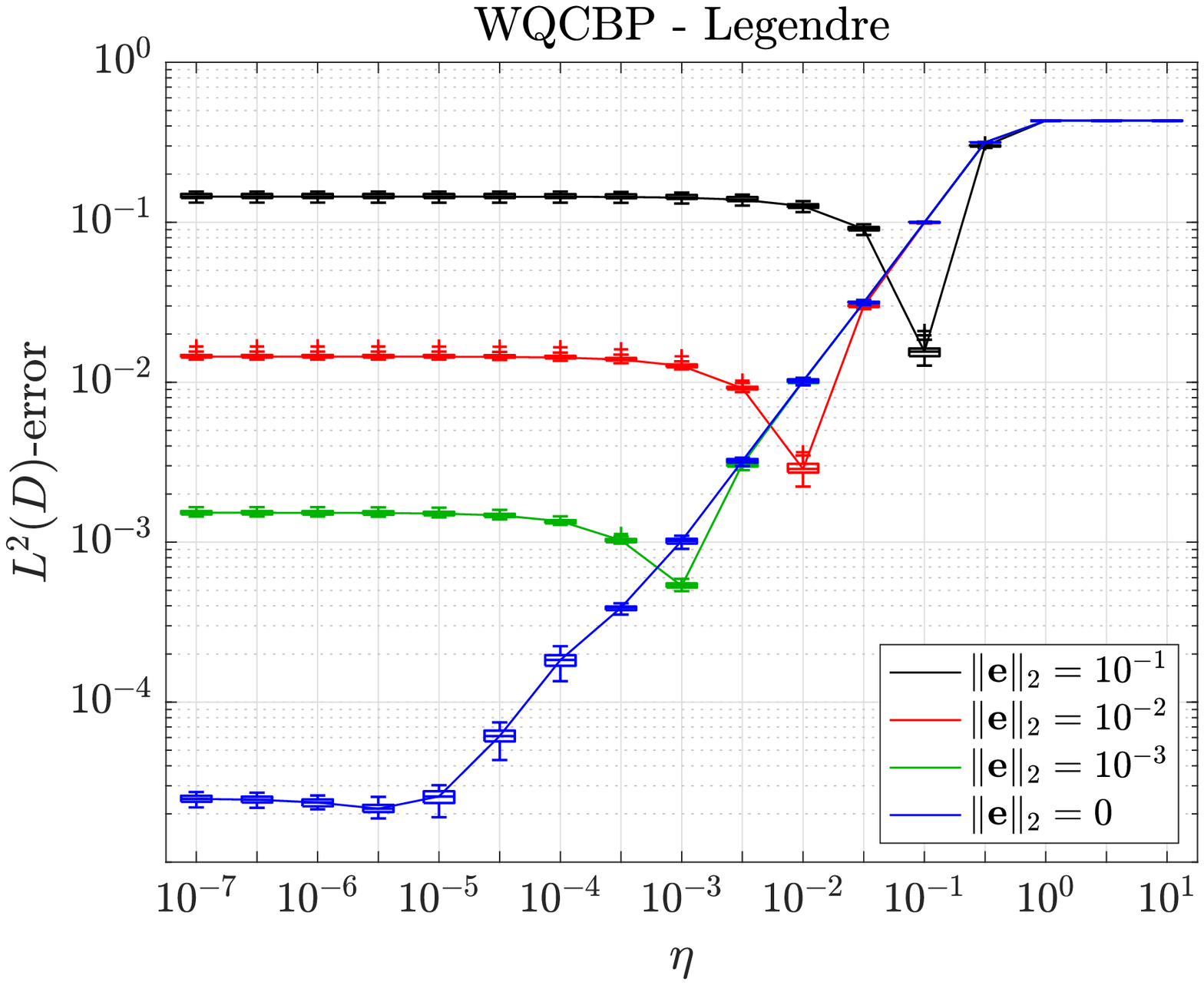}
\includegraphics[width = 7cm]{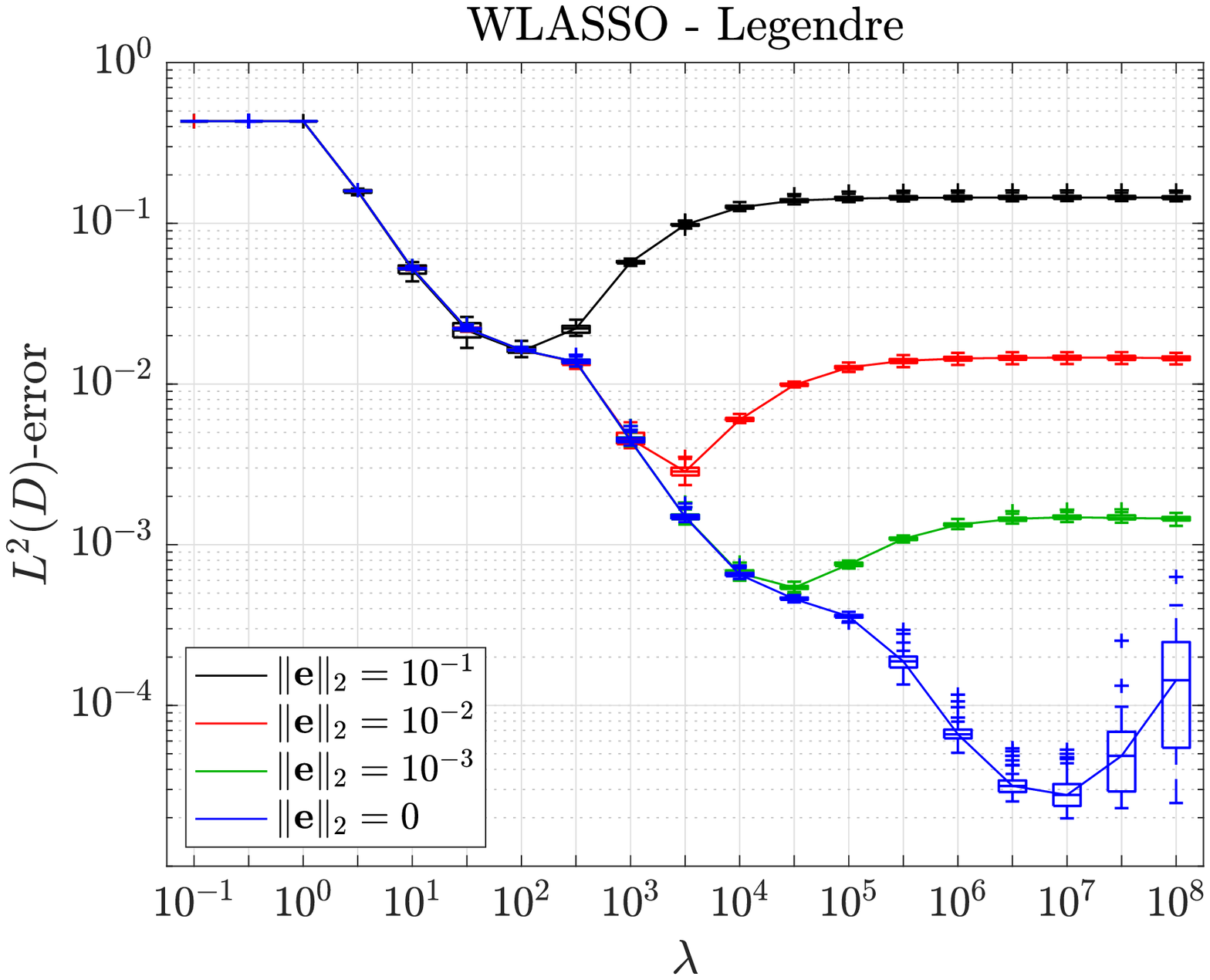}\\
\includegraphics[width = 7cm]{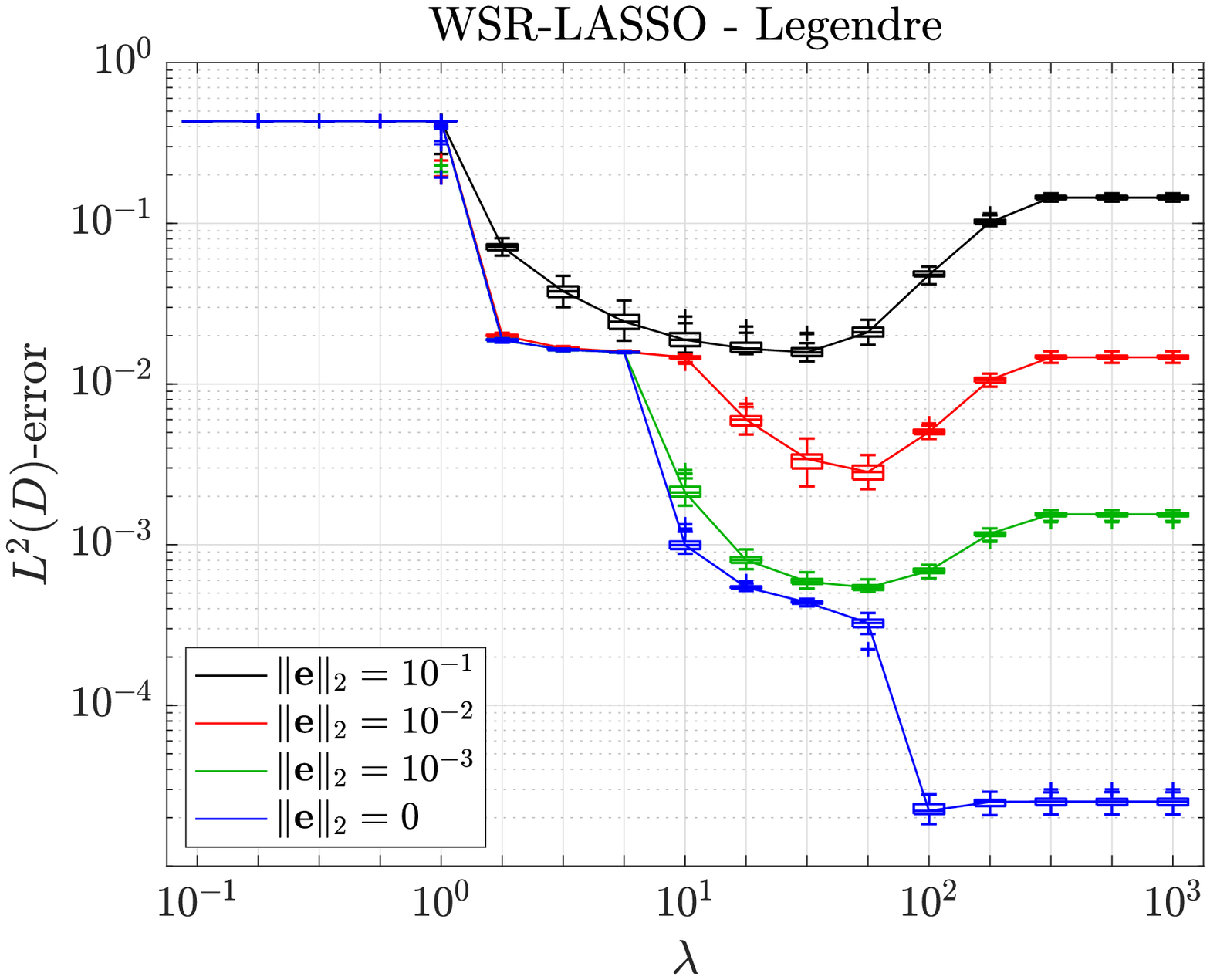}
\includegraphics[width = 7cm]{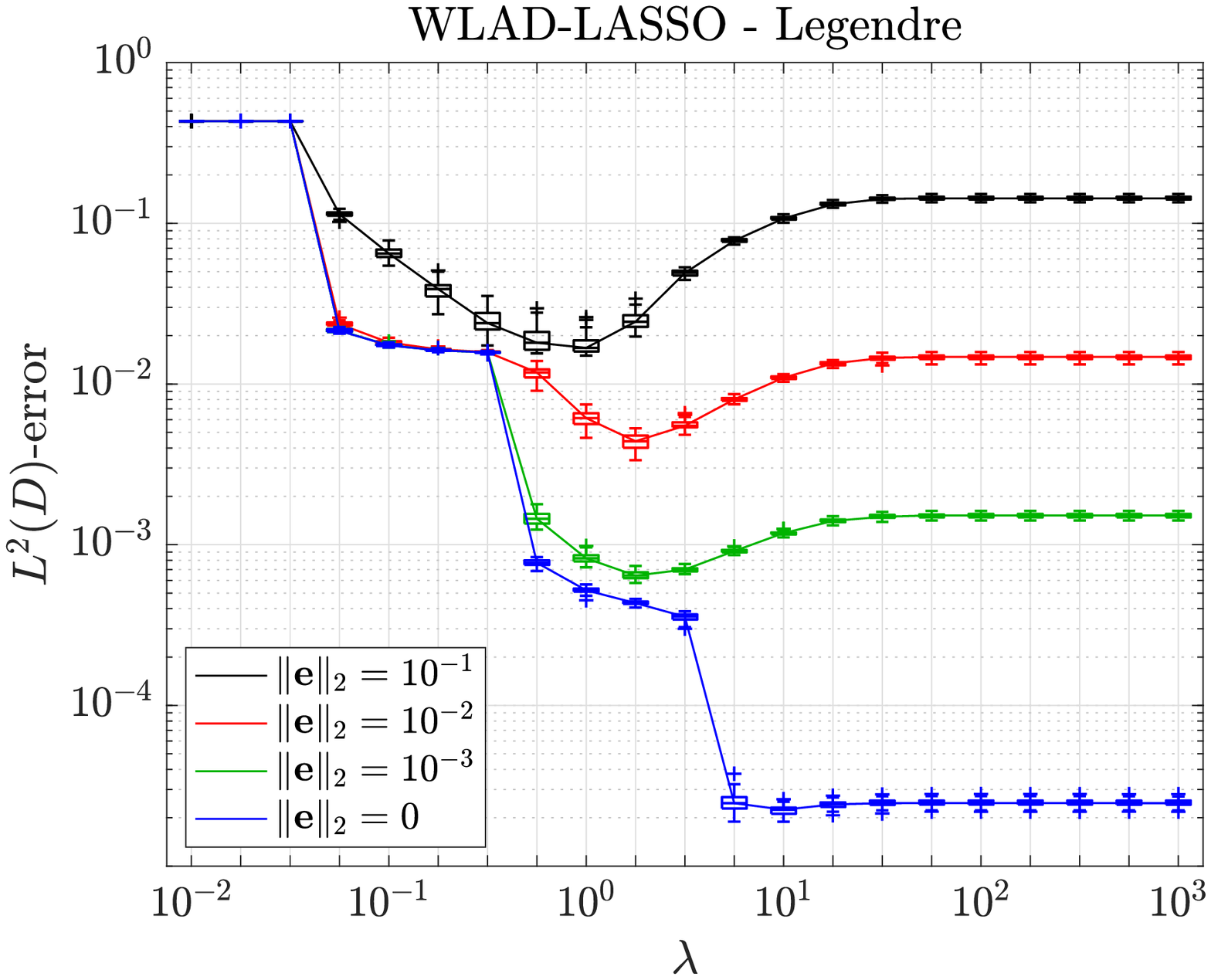}
\caption{\label{fig:param_vs_err_Leg} Box plot of the recover $L^2_\nu$ error as a function of the tuning parameter for WQCBP (top left), WLASSO (top right), WSR-LASSO (bottom left), and WLAD-LASSO (bottom right) when computing the sparse approximation of the function $f$  defined in \eqref{eq:par_vs_err_fun} with respect to tensorized Legendre polynomials.}
\end{figure}

\begin{figure}[t]
\centering
\includegraphics[width = 7cm]{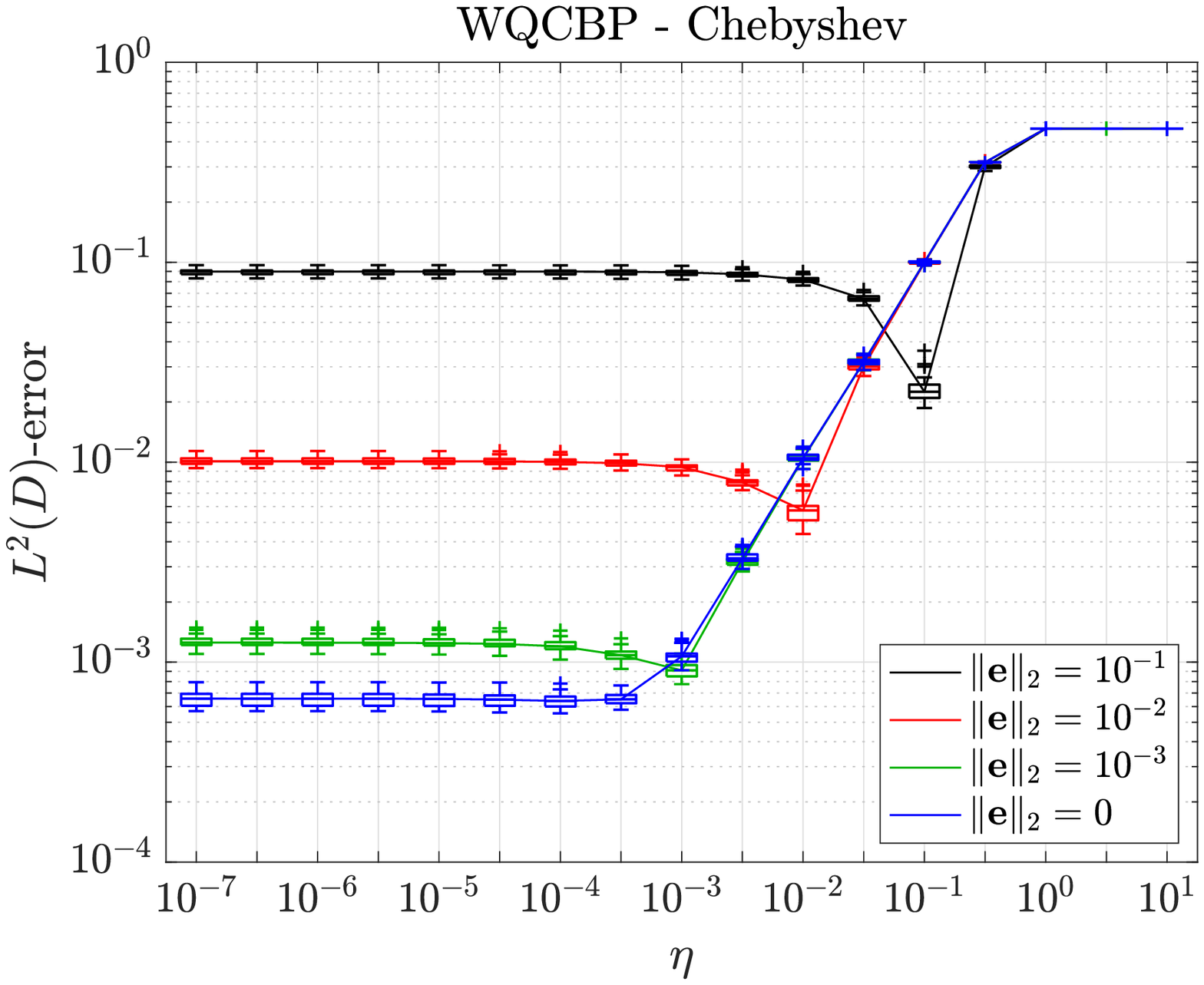}
\includegraphics[width = 7cm]{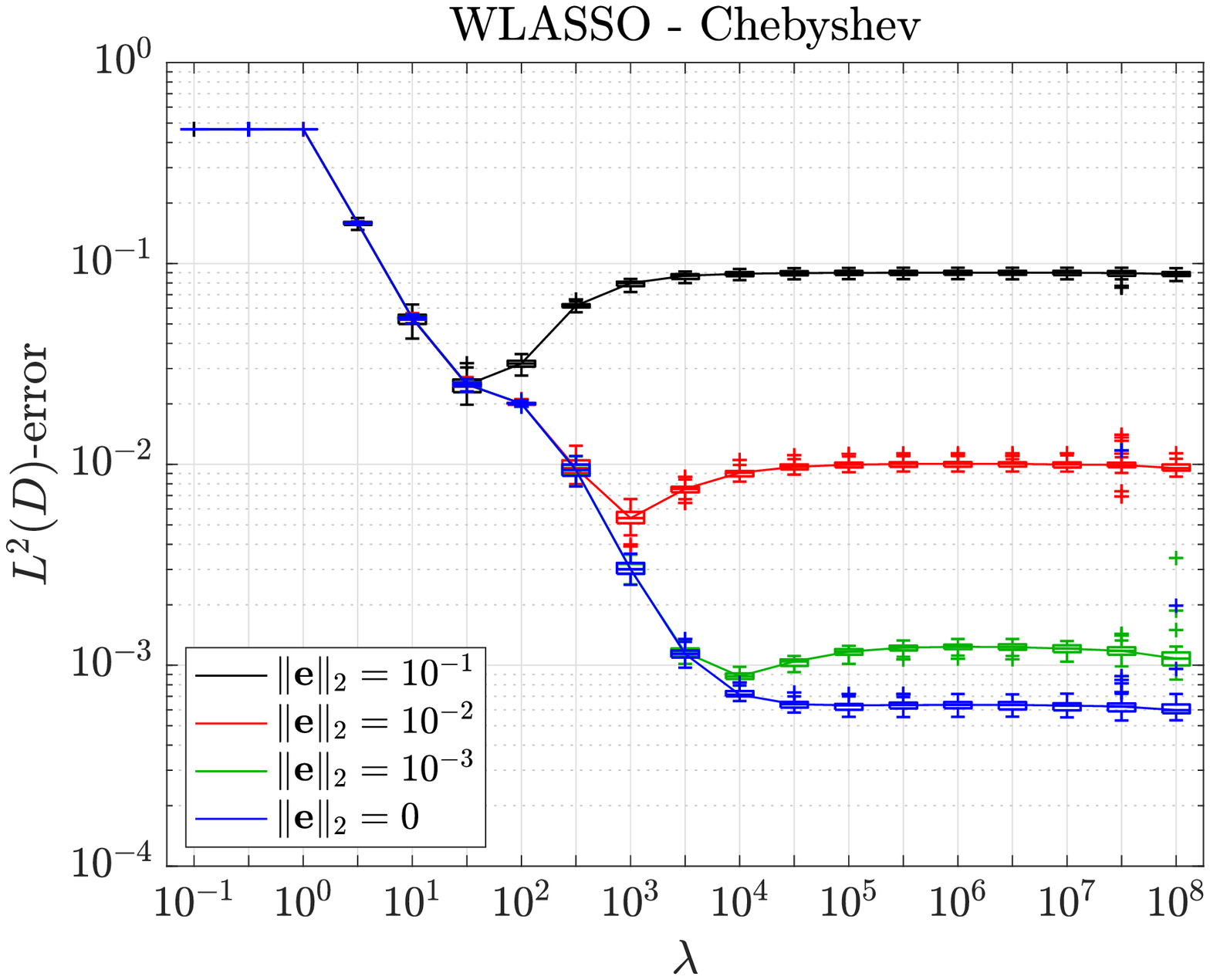}\\
\includegraphics[width = 7cm]{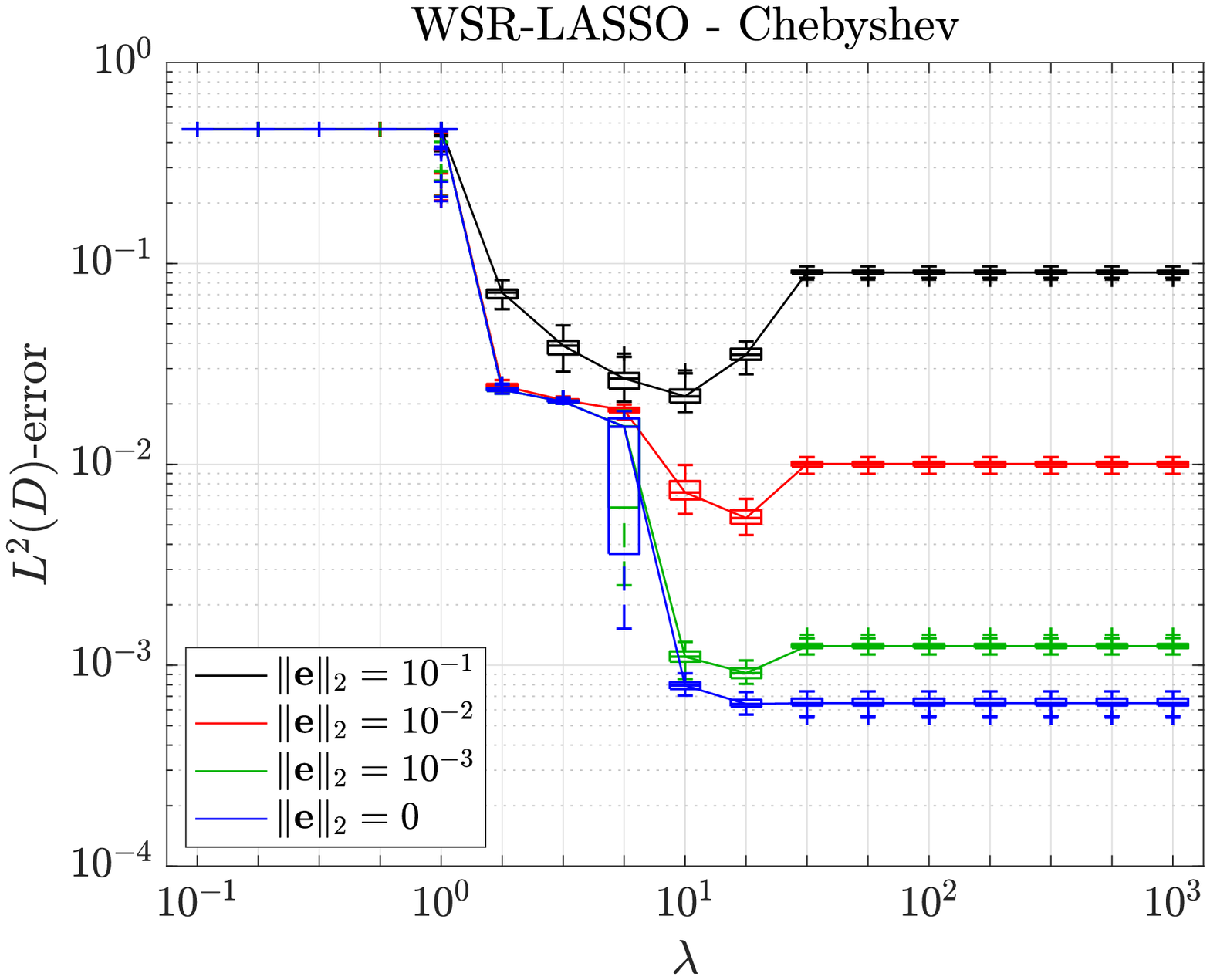}
\includegraphics[width = 7cm]{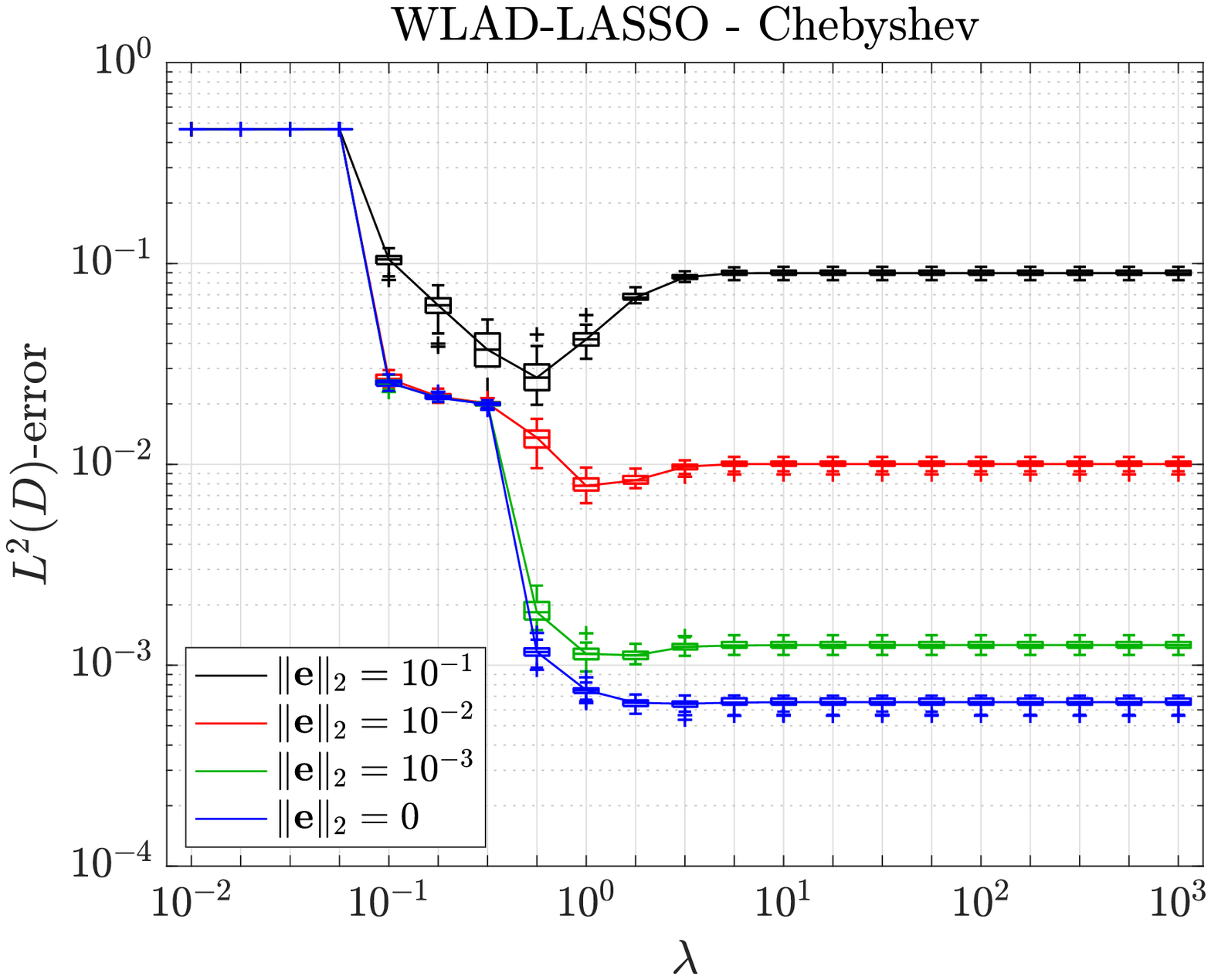}
\caption{\label{fig:param_vs_err_Che}Same experiment as in Fig.~\ref{fig:param_vs_err_Leg}, but for tensorized Chebyshev polynomials.}
\end{figure}

The results are shown in Figs.~\ref{fig:param_vs_err_Leg} and~\ref{fig:param_vs_err_Che}. In order to make the statistics of the randomized experiments transparent, we visualize the results in a box plot. The continuous lines represent the median values (in accordance with the box plot information). We consider the $L^2_\nu$ error with respect to a high-fidelity approximation to $f$, computed using least squares and $20 n = 28620$ random pointwise evaluations.\footnote{Notice that the outliers are sometimes aligned (e.g., in the tail of the blue curve in Fig.~\ref{fig:param_vs_err_Leg} bottom right). This is due to the structure of the proposed numerical experiment: for each randomized choice of samples, all the parameters are tested using the same samples.}

It is remarkable that all the decoders are able to reach an accuracy below the noise level for suitable values of the parameter. Moreover, the optimal choice of the tuning parameters seems to be in accordance with the theory provided in \S\ref{sec:rec_err}. Indeed, we observe a direct proportionality between the optimal value of $\eta$ and the noise level for WQCBP, and an inverse proportionality between the optimal value of $\lambda$ and the noise level for WLASSO. For WSR-LASSO and WLAD-LASSO, the optimal values of $\lambda$ are independent on the noise level. We also notice that the limit value of the $L^2_\nu$ recovery error associated with the unconstrained optimization programs for $\lambda \to \infty$ coincides with the limit value of the $L^2_\nu$ error associated with WQCBP for $\eta \to 0$. This suggests that the solution to the unconstrained programs tends to the solution of WBP as $\lambda \to \infty$. In other words, choosing a very large $\lambda$ forces the data fidelity constraint to be realized exactly, as it is natural to expect.

Let us take a closer look to the performance of WSR-LASSO and WLAD-LASSO, recalling the optimal choices for $\lambda$ given by \eqref{eq:lambdaWSRLASSO} and \eqref{eq:lambdaWLADLASSO}, respectively. For WSR-LASSO, being $s = 10$, we have $\sqrt{K(s)} \approx 10$ for Legendre and $\sqrt{K(s)}\approx \sqrt{10^{\log(3)/(2\log(2))}} \approx 6.2$  for Chebyshev polynomials. From the box plots, the optimal value of $\lambda$ seems to be around $10^{1.5} \approx 31.6$ (Legendre) and $10^{1.25}\approx17.8$ (Chebyshev). Therefore, $\lambda \approx 3 \sqrt{K(s)}$. For the WLAD-LASSO, in principle the theory predicts that $\lambda \approx \sqrt{K(s)/K}$. Since the error $\bm{e}$ is nonsparse in this case, we have $K = m \approx K(s) \log(n)$. We see that $\lambda \approx 3/\sqrt{\log(n)} \approx 1.1129$  seems to match the numerics well. This is the reason why we choose the hidden constant to be $3$ in \eqref{eq:lambdaWSRLASSO} and \eqref{eq:lambdaWLADLASSO} for the subsequent experiments.

\subsection{Sampling complexity \emph{vs.} error}
\label{sec:m_vs_err}

We make another comparison of the four decoders by studying the decay of the $L^2_\nu$ error as a function of the sampling complexity $m$ when approximating the function $f$ defined in \eqref{eq:par_vs_err_fun} with $d = 10$, employing tensorized Chebyshev polynomials. We set $s = 15$, corresponding to $n = |\Lambda^{\text{HC}}_{10,15}| = 1341$ and  consider a number of samples $m = \lceil C\cdot K(s)\rceil$ for $C = 2:0.5:4$, where $K(s) \approx 15^{\log(3)/\log(2)} \approx 73.1$. First, we compute a high-fidelity solution $\hat{\bm{x}}_{\text{LS}}$ by means of least squares, using $20\cdot n = 26820$ random pointwise samples. Then, we consider ten different combinations of solvers and parameters. For the sake of the comparison, we use the high-fidelity solution $\hat{\bm{x}}_{\text{LS}}$ to produce ``oracle'' estimates of the noise level (notice that this is an idealized scenario since $\hat{\bm{x}}_{\text{LS}}$ is not available in practice). Moreover, we employ  the cross-validation procedure described in Algorithm~\ref{alg:cv}, and the values of the parameters suggested by the theoretical analysis for WSR-LASSO ($\lambda = 3\sqrt{K(s)}$) and WLAD-LASSO ($\lambda = 3/\sqrt{\log(n)}$) (see \S\ref{sec:rec_err} and the discussion at the end of \S\ref{sec:par_vs_err}). The cross-validation procedure is always employed as $\textsc{CrossValidation}(\mat{A},\bm{y},5,\Delta,\mathcal{P},3)$, where $\Delta$ and $\mathcal{P}$ are specified on a case-by-case basis. The ten combinations considered are the following:
\begin{enumerate}
 \setlength{\topsep}{0pt}
        \setlength{\parskip}{0pt}
        \setlength{\partopsep}{0pt}
        \setlength{\parsep}{0pt}         
        \setlength{\itemsep}{0pt} 
\item WQCBP with $\eta = 0$;
\item WQCBP with  $\eta = \eta_{\text{oracle}} = \|\mat{A}\hat{\bm{x}}_{\text{LS}}-\bm{y}\|_2$;
\item  WQCBP with cross validation, where $\mathcal{P} = \eta_{\text{oracle}} \cdot  \text{10.$\myxor$(-2:0.5:2)}$;
\item WLASSO with $\lambda = \lambda_{\text{oracle}} = \sqrt{K(s)}/\|\mat{A}\hat{\bm{x}}_{\text{LS}} - \bm{y}\|_2$;
\item WLASSO with cross validation, where  $\mathcal{P} = \lambda_{\text{oracle}} \cdot  \text{10.$\myxor$(-2:0.5:2)}$;
\item WSR-LASSO with $\lambda = 3 \sqrt{K(s)}$;
\item WSR-LASSO with , where $\mathcal{P} = 3\sqrt{K(s)} \cdot  \text{10.$\myxor$(-2:0.5:2)}$;
\item WLAD-LASSO with $\lambda = 3/\sqrt{(k/m)\log(n)}$;
\item WLAD-LASSO with cross validation, where  $\mathcal{P} = \frac{3\sqrt{H}}{\sqrt{m\log(n)}} \cdot  \text{10.$\myxor$(-2:0.5:2)}$;
\item WLAD-LASSO with $\lambda = 1$.
\end{enumerate}
Notice that for WLAD-LASSO we set $\bm{v} = 1$ in \eqref{eq:WLADLASSO}.

This comparison is performed in three different scenarios. First, without corrupting the samples, thus having only truncation error  \eqref{eq:deftrunc} (Fig.~\ref{fig:m_vs_err}, top right). Second, corrupting the measurement by  random noise $\bm{e} = 10^{-2}\bm{n}/\|\bm{n}\|_2$, where $\bm{n}$ is a random vector with independent entries uniformly distributed over $[-1,1]$ (Fig.~\ref{fig:m_vs_err}, bottom left). Third, corrupting $10\%$ of the measurements by a random noise uniformly distributed over $[-10,10]$ (Fig.~\ref{fig:m_vs_err}, bottom right).  The respective curves show the $L^2_\nu$ error with respect to the high-fidelity solution, averaged over 25 trials.
\begin{figure}[t]
\centering
\begin{tabular}{cc}
\raisebox{1cm}{\includegraphics[width = 5cm]{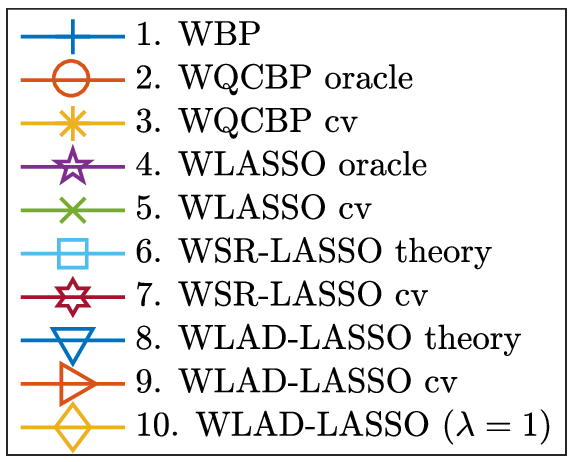}} &
\includegraphics[width = 7cm]{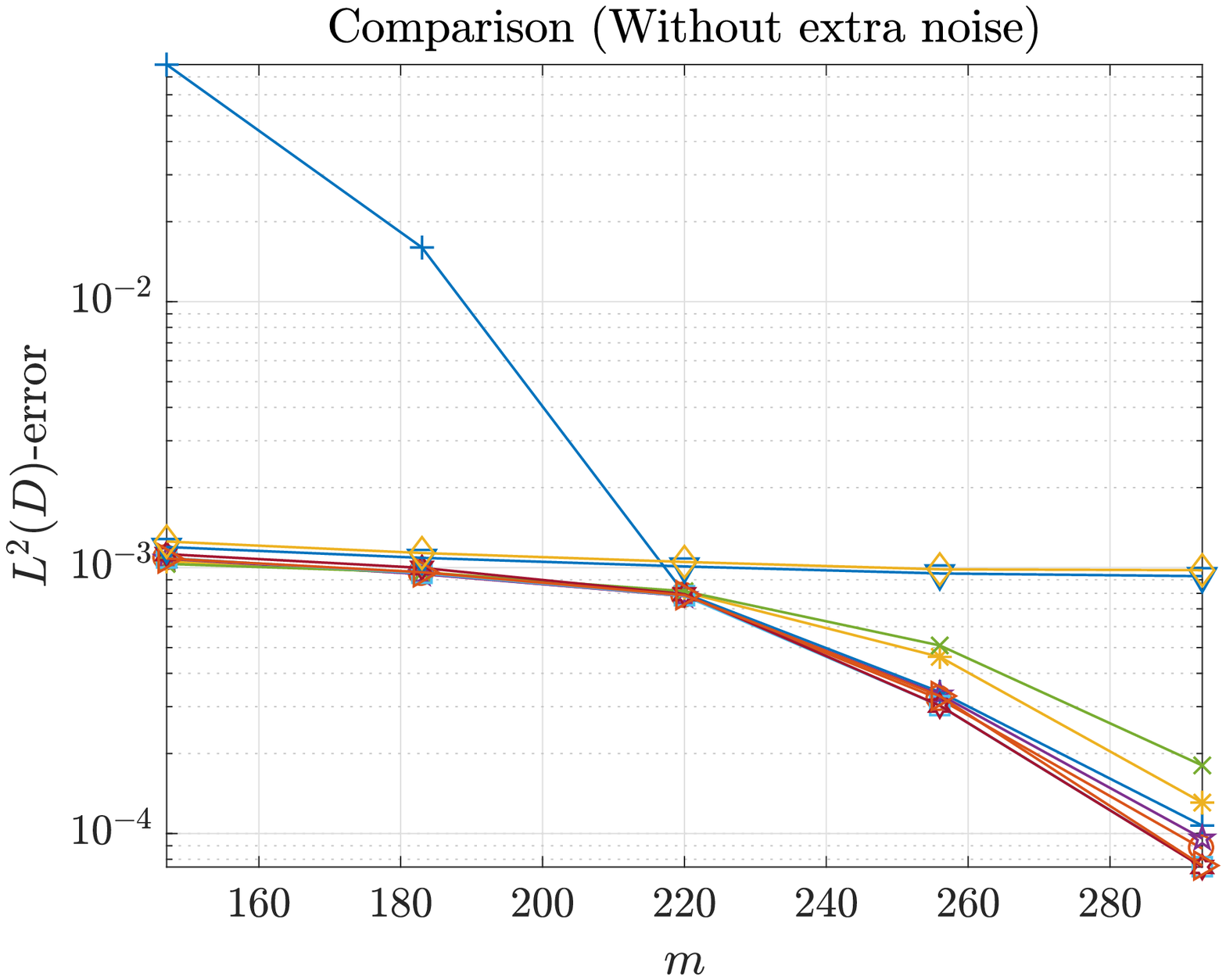} \\
\includegraphics[width = 7cm]{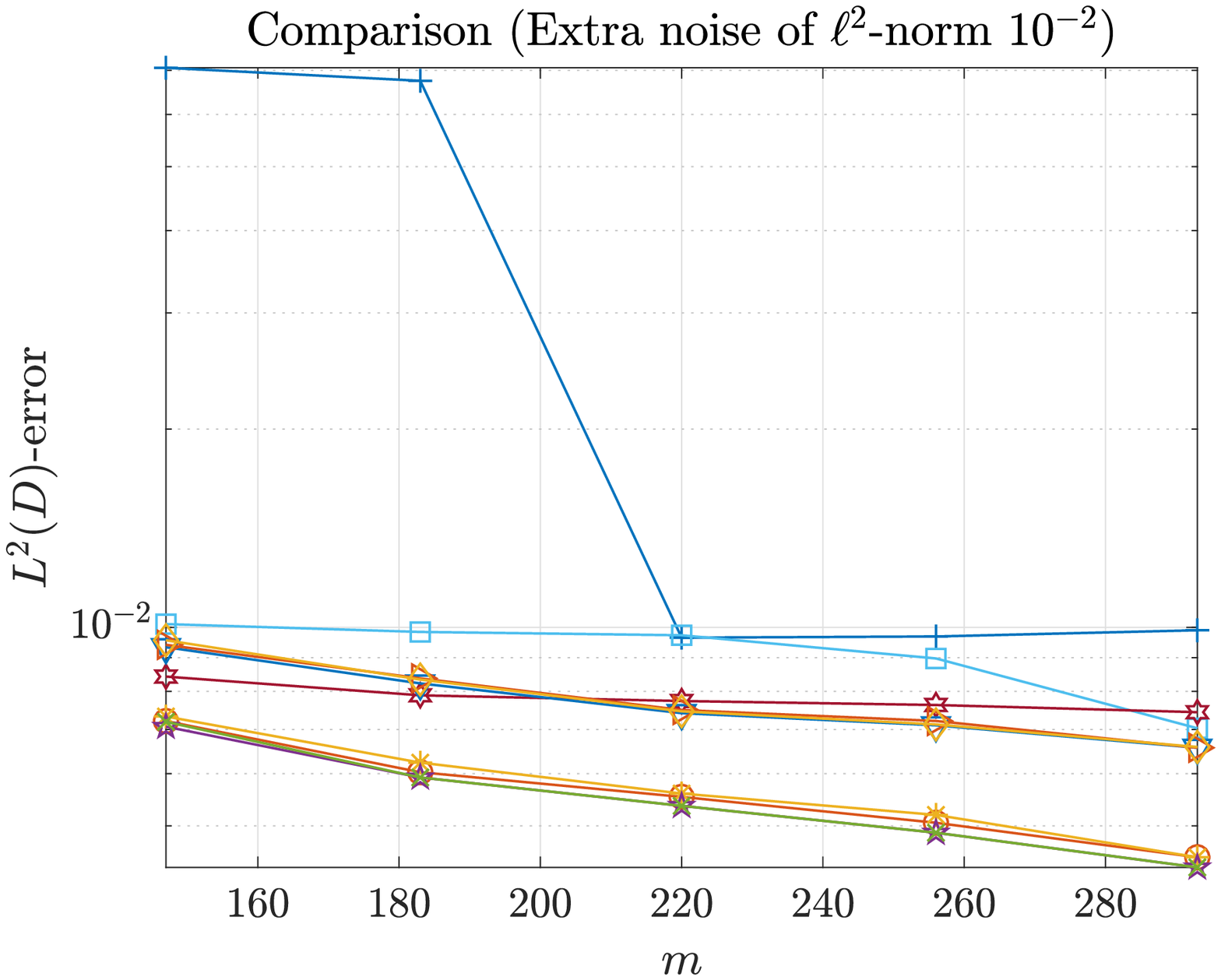} &
\includegraphics[width = 7cm]{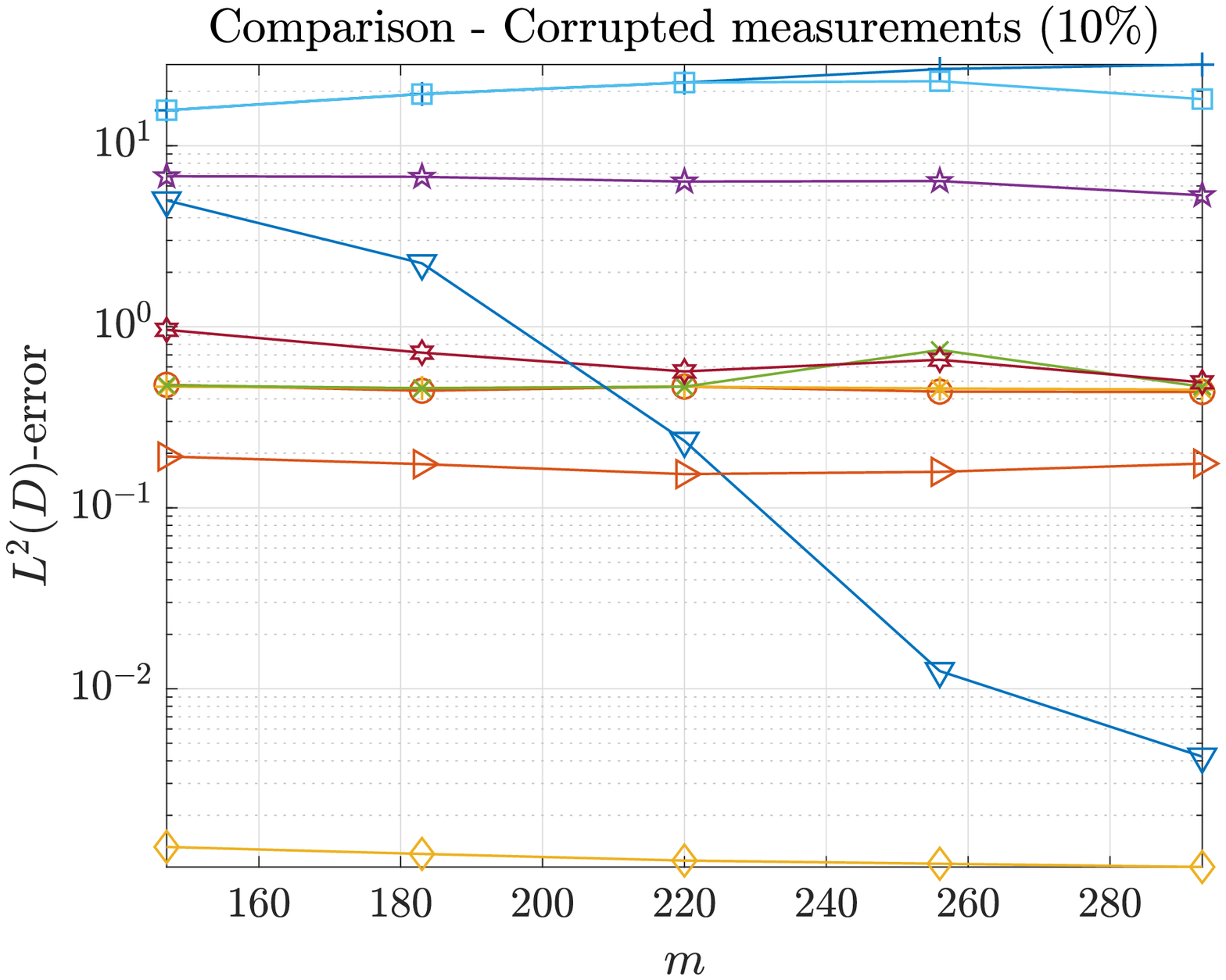}  
\end{tabular}
\caption{\label{fig:m_vs_err}Comparison among WQCBP, WLASSO, WSR-LASSO, and WLAD-LASSO for the approximation of \eqref{eq:par_vs_err_fun} with $d = 10$. The ten combinations of decoder and parameter considered are described in \S\ref{sec:m_vs_err}.}
\end{figure}

Let us comment the results in Fig.~\ref{fig:m_vs_err}. First, the choices of the parameters predicted by the theory give good results in general. Moreover, the performance of each decoder is usually improved by cross validation. WBP slightly underperforms with respect to all the other solvers apart from the case without extra noise and for $m$ large enough (Fig.~\ref{fig:m_vs_err} top right). The performance of WSR-LASSO is comparable with those of WQCBP and WLASSO, but for WSR-LASSO the optimal parameter choice does not depend on the noise level. Finally, in the case of sparsely corrupted measurements, WLAD-LASSO outperforms the other decoders, as expected (Fig.~\ref{fig:m_vs_err} bottom right). The choice $\lambda  = 1$ is particularly good in the sparsely corrupted case. It is also worth noting that cross validation does not seem to help in this case. This is natural, since Algorithm~\ref{alg:cv} chooses the tuning parameter that minimizes the $\ell^2$ norm of the residual, which is not small in this case. A possible remedy could be to replace step~\ref{step:valid_err} in Algorithm~\ref{alg:cv} with $\varepsilon(t,g,p) = \|\mat{A}_v\hat{\bm{x}}-\bm{y}_v\|_1$.\\

We  note in passing that the experiments in Figs.\ref{fig:param_vs_err_Leg}, \ref{fig:param_vs_err_Che} \& \ref{fig:m_vs_err} yield analogous results when the random noise $\bm{n}$ used to corrupt the samples has independent normal Gaussian  entries $\mathcal{N}(0,1)$. We do not add the resulting figures for the sake of brevity.




\subsection{Application to parametric ODEs and PDEs with random inputs} 
\label{sec:UQ}
Finally, we compare the performance of the decoders under exam when the function begin approximated is a quantity of interest of the solution to a parametric ODE or PDE random inputs. 

\subsubsection{Parametric PDEs: Diffusion equation}
\label{sec:PDEs}

We consider the following diffusion equation:
\begin{equation}
\label{eq:cookies}
\begin{cases}
-\nabla \cdot (a_\bm{t}(x,y) \nabla u_\bm{t}(x,y)) = F, & \forall (x,y) \in \Omega,\\
u_\bm{t}(x,y) = 0,&  \forall (x,y) \in \partial \Omega,
\end{cases}
\end{equation}
where $\Omega = [0,1]^2$ is the physical domain, $a_{\bm{t}}$ is a parametric diffusion coefficient, $u_{\bm{t}}$ is the unknown solution, and $F$ is a nonparametric forcing term. This example and analogous variations have been considered in \cite{Back2011,Ballani2015,Chkifa2015a}. The parametric space is $D = (-1,1)^8$ and the diffusion coefficient affinely depends on the parameters 
$$
a_{\bm{t}}(x,y) = 1 - \sum_{\ell=1}^8 \mathbbm{1}_{\Omega_\ell}(x,y) (0.595 + 0.395 t_\ell), \quad \forall (x,y) \in \Omega, \; \forall \bm{t} \in D,
$$
where $\Omega_1,\ldots,\Omega_8$ are circular subregions of $\Omega$ of radius $0.13$ placed symmetrically with respect to the center of $\Omega$ (see Fig.~\ref{fig:meshes}). If we think about $\bm{t}$ as a  random vector uniformly distributed in $D$, then  $a_{\bm{t}}|_{\Omega_\ell}$ can be thought as a random variable uniformly distributed $[0.01, 0.99]$. The forcing term is 
$$
F(x,y) = 100 \cdot \mathbbm{1}_{\Omega_F}(x,y), \quad \forall (x,y) \in \Omega,
$$
where $\Omega_F = [0.4,0.6]^2$.  We are interested in the function defined by the following quantity of interest:
\begin{equation}
\label{eq:function_UQ}
f(\bm{t}) = \int_{\Omega_F} u_{\bm{t}}(x,y) \text{d}x\text{d}y.
\end{equation}
We sample $f(\bm{t})$ by solving the PDE \eqref{eq:cookies} with FreeFem++ (version 3.42, 64 bits), employing triangular P1 elements  and generating finer and finer meshes \cite{FreeFem++}. The mesh is generated with the FreeFem++ mesher, by controlling its resolution by a parameter $N_{el}$. In particular, we parametrize each edge of $\Omega$ using $5N_{el}$ elements, the circumference of each $\Omega_\ell$ with $4N_{el}$ elements and each edge of $\Omega_R$ using $N_{el}$ elements. Then, we let $N_{el} = 1,2,3,5$.  The corresponding meshes are represented in Fig.~\ref{fig:meshes} and have a number of degrees of freedom equal to 56, 141, 267, and 715, respectively. Each sample $f(\bm{t}_i)$ is then computed by considering the integral of the resulting piecewise linear approximation to $u_{\bm{t}_i}$ over $\Omega_F$. In this case, the unknown error affecting the samples contains the truncation error \eqref{eq:deftrunc}, the discretization error due to the Galerkin projection, and the numerical error associated with the FreeFem++ solver. We only expect this unknown error to decrease with respect to $N_{el}$.
\begin{figure}
\centering
\includegraphics[height=3.8cm]{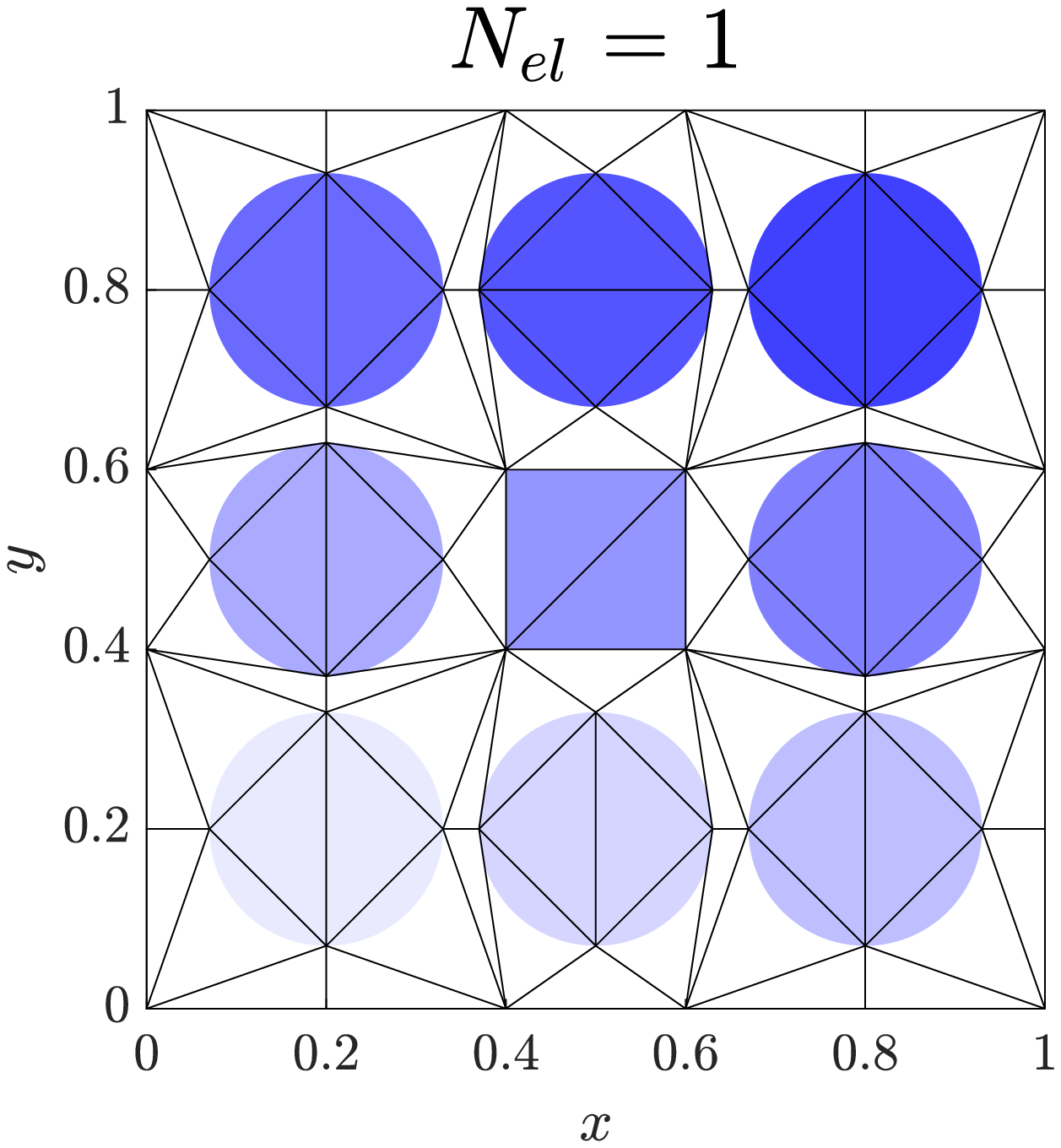}
\includegraphics[height=3.8cm]{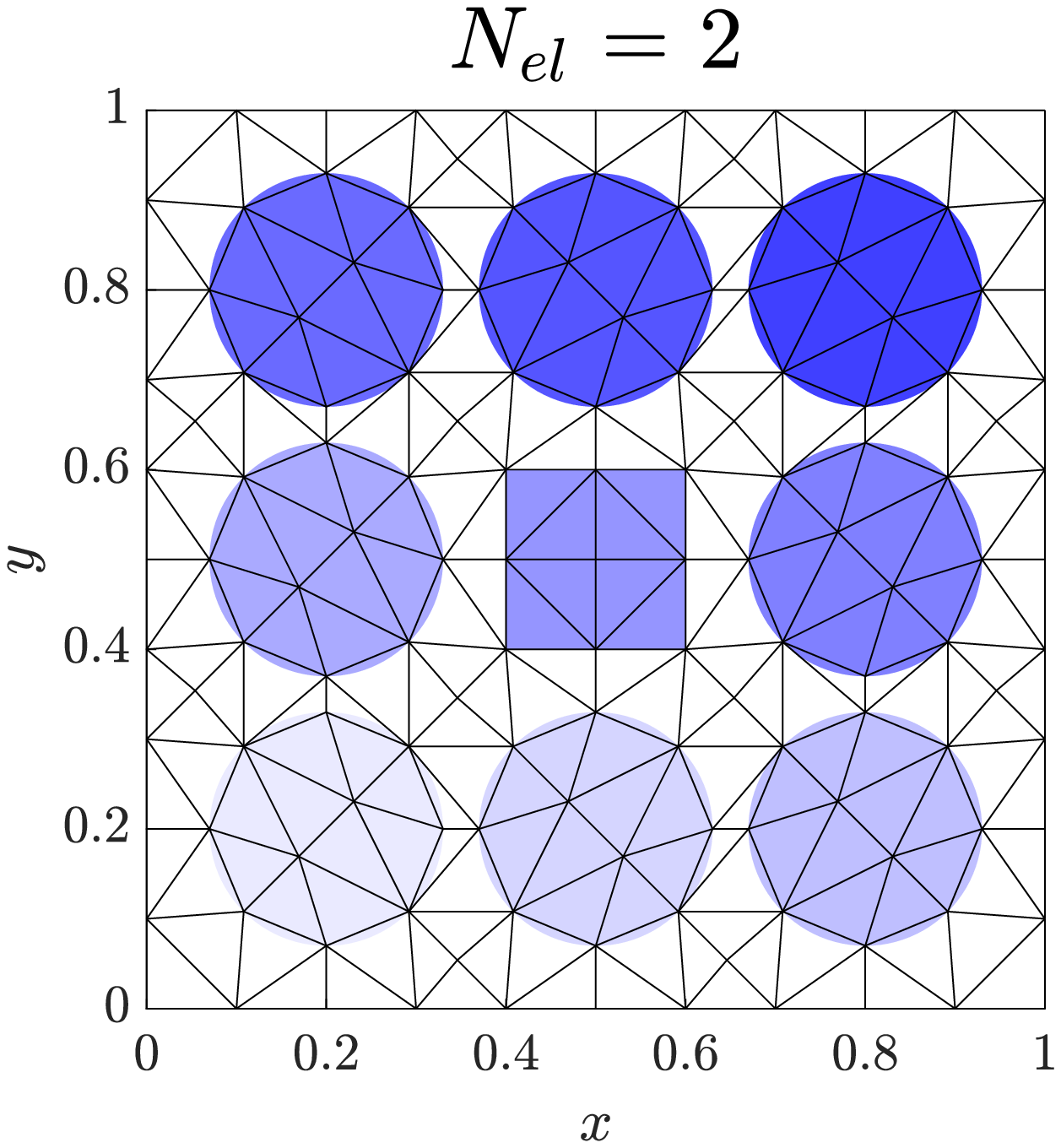}
\includegraphics[height=3.8cm]{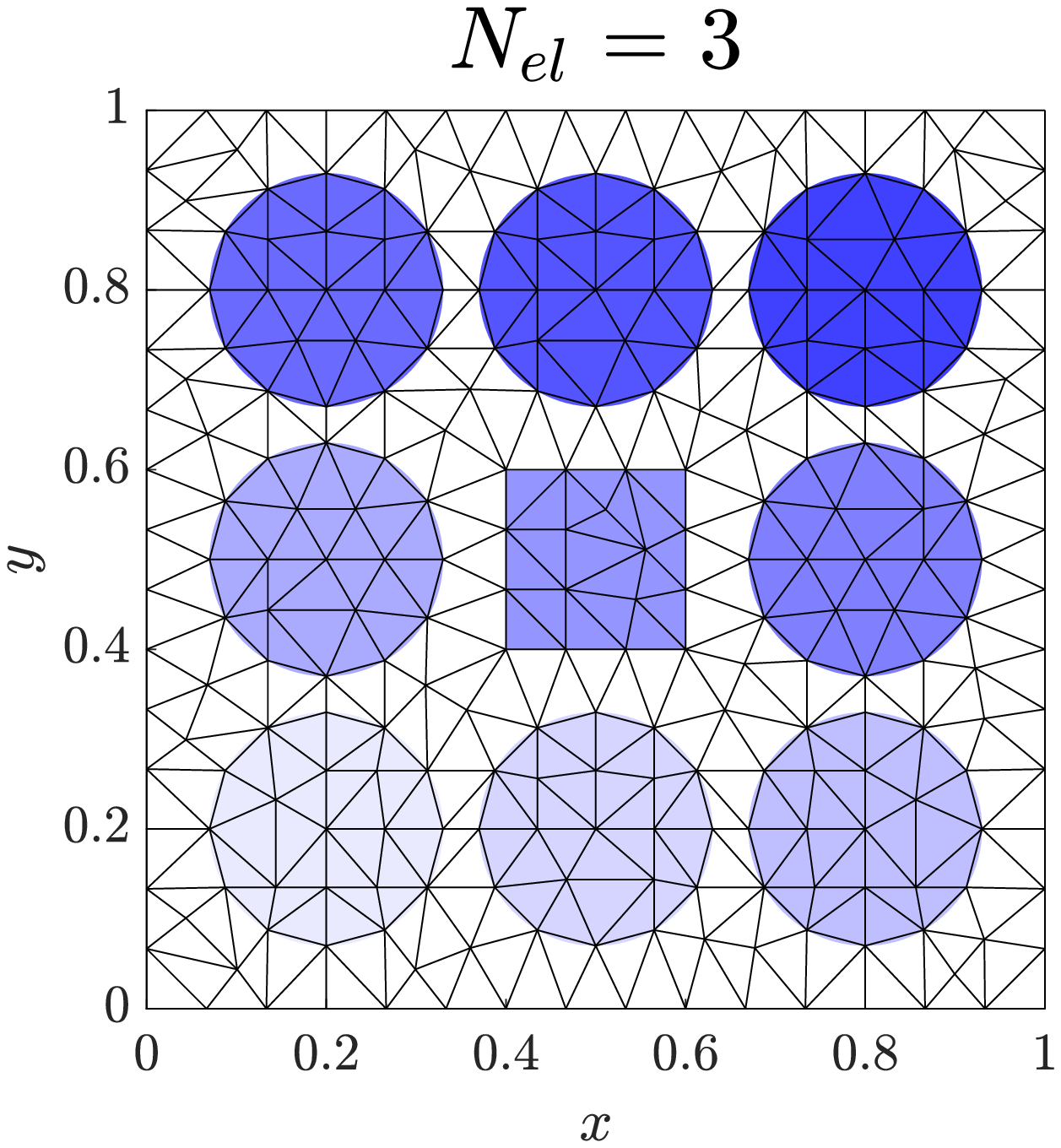}
\includegraphics[height=3.8cm]{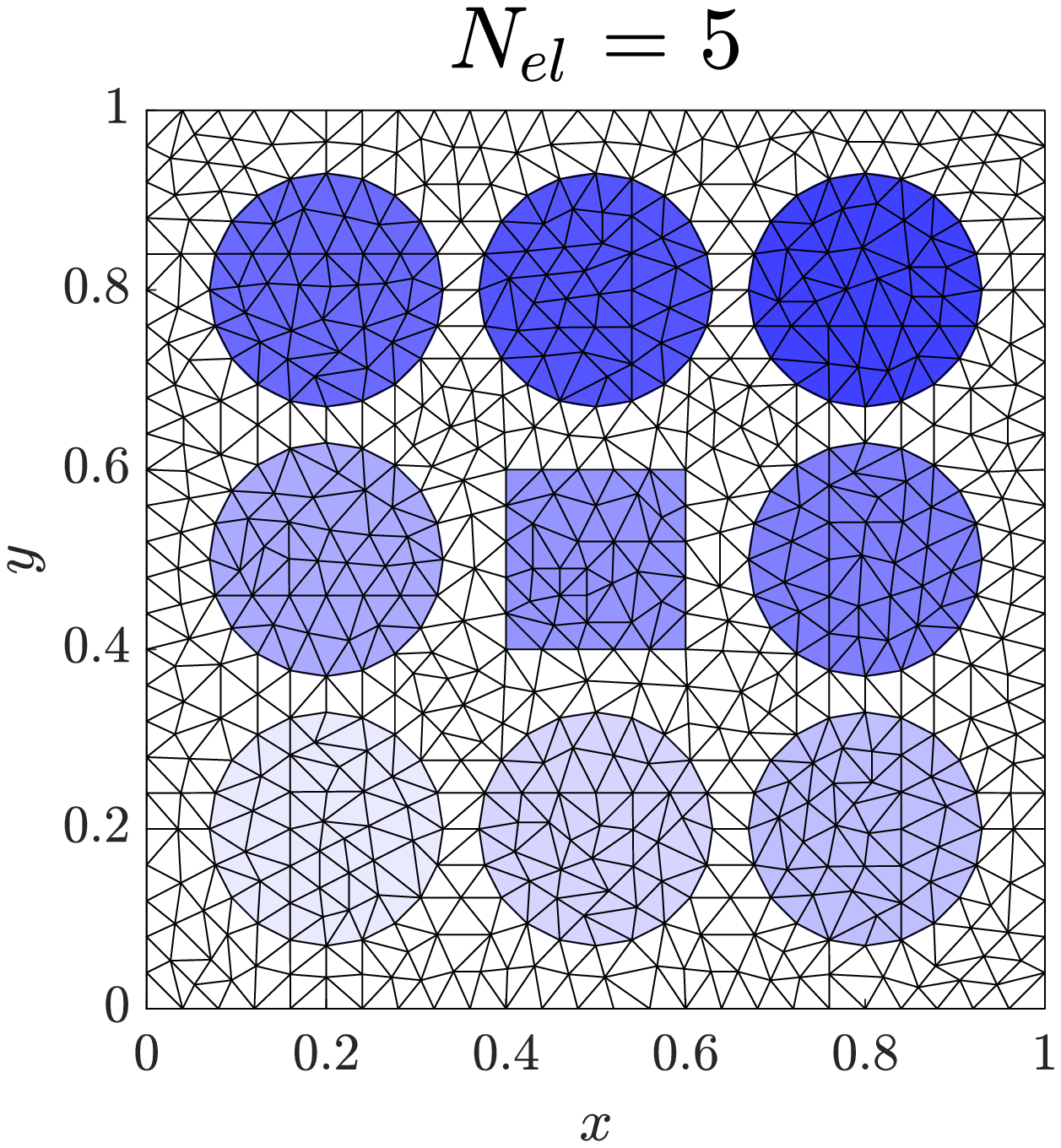}
\caption{\label{fig:meshes}The physical domain $\Omega$ discretized with increasing  mesh resolution, corresponding to $N_{el}=1,2,3,5$ (from left to right). The circular regions $\Omega_1,\ldots,\Omega_8$ and the central square region $\Omega_F$ are highlighted.}
\end{figure}

Similarly to \S\ref{sec:par_vs_err} \& \ref{sec:m_vs_err}, we study the behavior of the recovery error as a function of the tuning parameter and of the sample complexity. Since $\bm{t}$ is uniformly distributed in $D$, we employ tensorized Legendre polynomials.

In Fig.~\ref{fig:UQ_param_vs_err} we consider an experiment analogous  to Figs.~\ref{fig:param_vs_err_Leg} \& \ref{fig:param_vs_err_Che} for the function $f$ defined in \eqref{eq:function_UQ}. Namely, we plot the $L^2_\nu$ error as a function of the tuning parameter (chosen as in Table~\ref{tab:par_range}) for the four decoders. We set $s=10$, resulting in $n = 353$, and $m = 30$. For each value of $N_{el} = 1,2,3,5$, we run 50 random experiments. The $L^2_\nu$ error is computed with respect to a high-fidelity solution computed using a mesh size $N_{el} = 100$ (with 267076 degrees of freedom) and approximated with least squares from 50000 random samples. 
\begin{figure}
\centering
\includegraphics[width = 7cm]{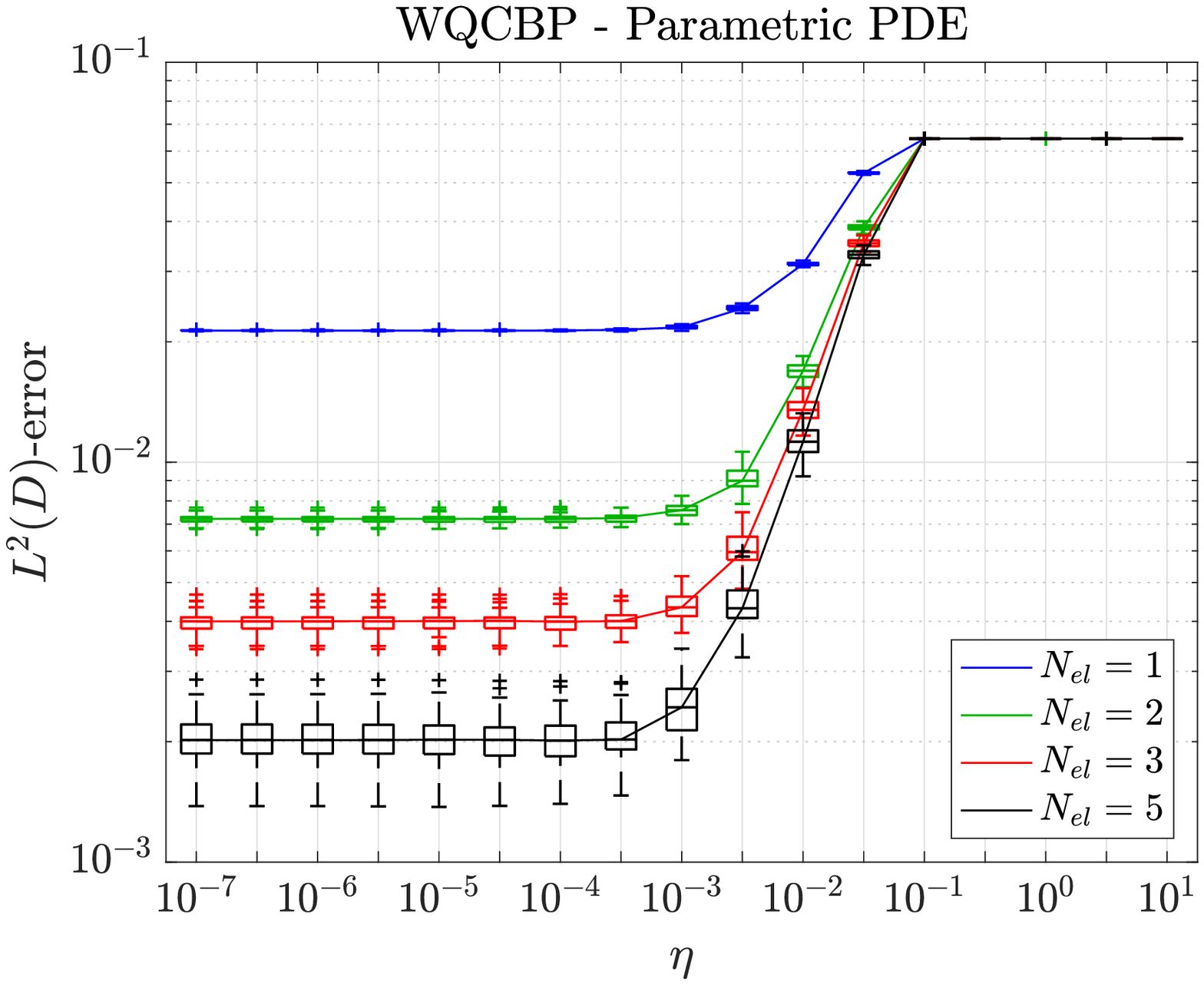}
\includegraphics[width = 7cm]{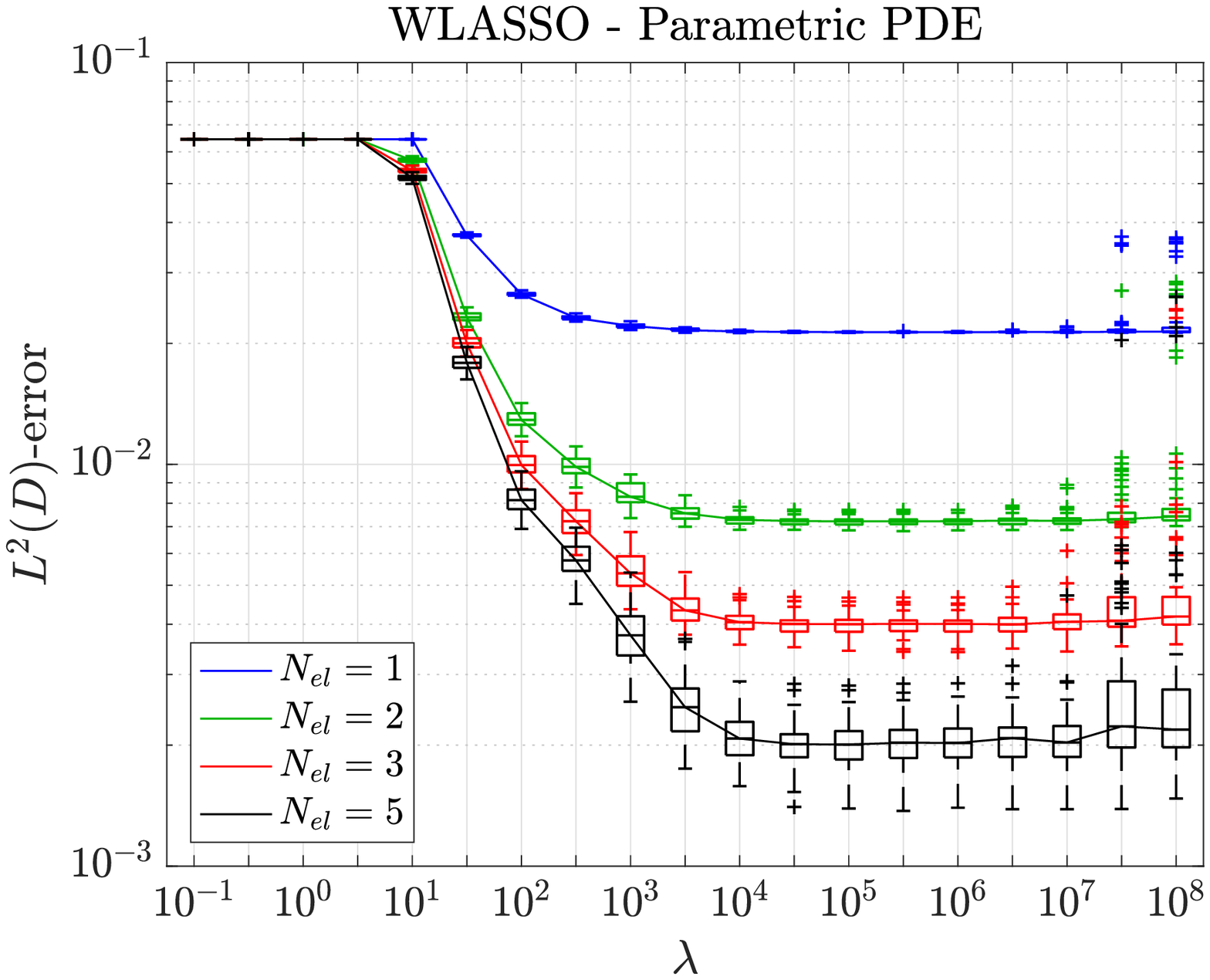}\\
\includegraphics[width = 7cm]{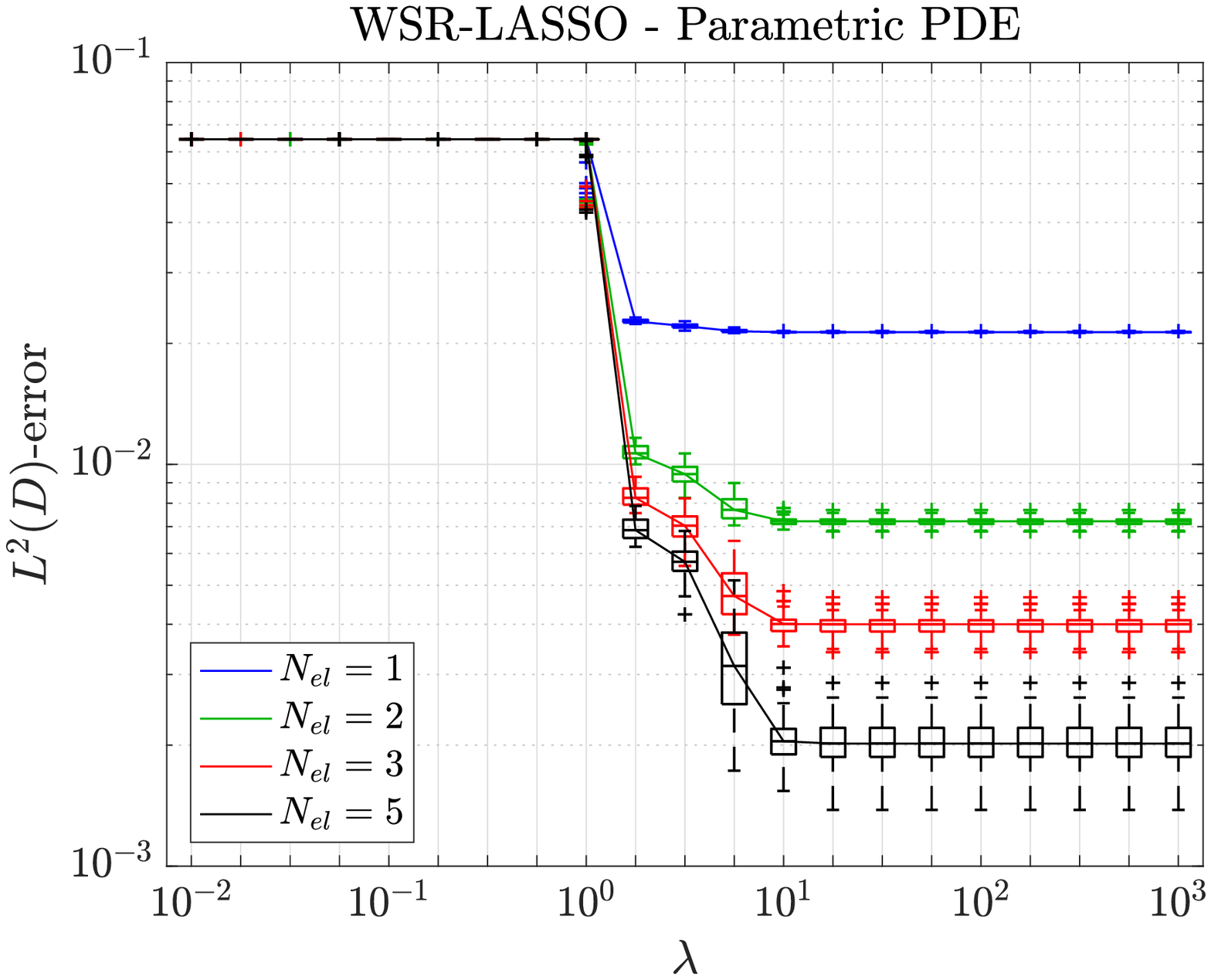}
\includegraphics[width = 7cm]{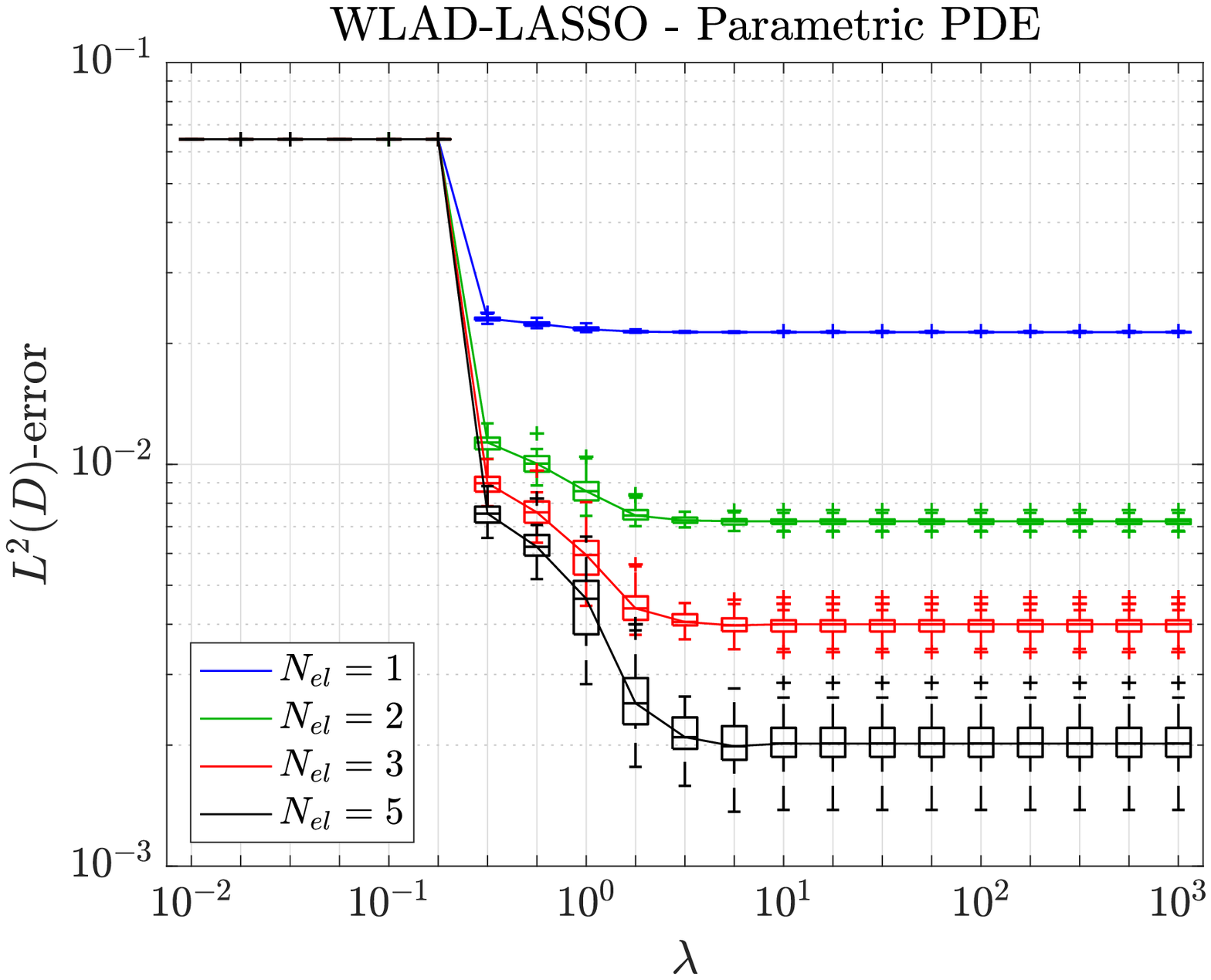}
\caption{\label{fig:UQ_param_vs_err} Box plot of the recovery error as a function of the tuning parameter for WQCBP (top left), WLASSO (top right), WSR-LASSO (bottom left), and WLAD-LASSO (bottom right) for the sparse approximation of the quantity of interest $f$ defined in \eqref{eq:function_UQ} with respect to tensorized Legendre polynomials.}
\end{figure}

As expected, the best accuracy achieved by each solver is a decreasing function of $N_{el}$. We can still see that the best choice of the parameter depends on the noise level for WQCBP and WLASSO, whereas it is independent of the noise for WSR-LASSO and WLAD-LASSO. Surprisingly, the strict global minima corresponding to the optimal choice of the parameter observed in Figs.~\ref{fig:param_vs_err_Leg} \& \ref{fig:param_vs_err_Che} do not appear here. In fact, all solvers exhibit a plateau. We may argue that the optimal choices \eqref{eq:optimaleta}, \eqref{eq:lambdaWLASSO}, \eqref{eq:lambdaWSRLASSO}, and \eqref{eq:lambdaWLADLASSO} hold under the form of inequality, and not of equality. Namely, $\eta \lesssim \|\bm{e}\|_2$ for WCQBP, $\lambda \gtrsim \sqrt{K(s)}/\|\bm{e}\|_2$ for WLASSO, $\lambda \gtrsim \sqrt{K(s)}$ for WSR-LASSO, and $\lambda \gtrsim 1/\sqrt{\log(n)}$ for WLAD-LASSO.\footnote{We have performed the same experiment for  quantities of interest different from \eqref{eq:function_UQ}, such as the integral of $u_{\bm{t}}$ over the regions $\Omega_i$, pointwise evaluations of $u_\bm{t}$, or the integral of $u_{\bm{t}}^2$ over $\Omega_i$ or $\Omega_F$. In all these cases, we do not observe the strict global minima in the parameter-vs-error plot. These experiment are not reported here for the sake of brevity.}  See \S \ref{ss:discussion} for some further discussion.

Next, we compare the performance of the four decoders using the ten combinations of decoders and tuning parameters described in \S\ref{sec:m_vs_err}. We  let $m = (20:20:100)$ and use $N_{el} = 5$. Fig.~\ref{fig:UQ_m_vs_err} shows the $L^2_\nu$ error as a function of $m$, averaged over 25 trials. 
\begin{figure}
\centering
\begin{tabular}{cc}
\raisebox{1cm}{\includegraphics[width = 5cm]{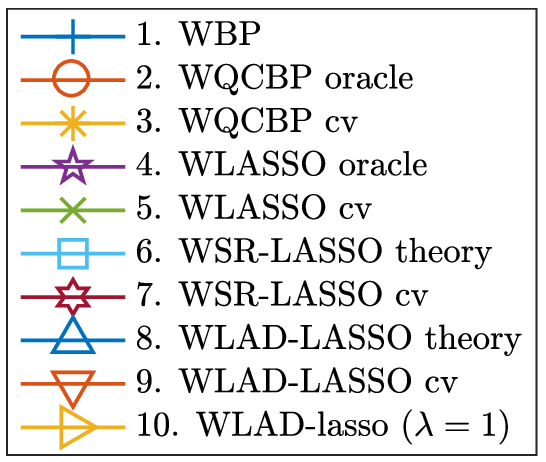}} &
\includegraphics[width = 7cm]{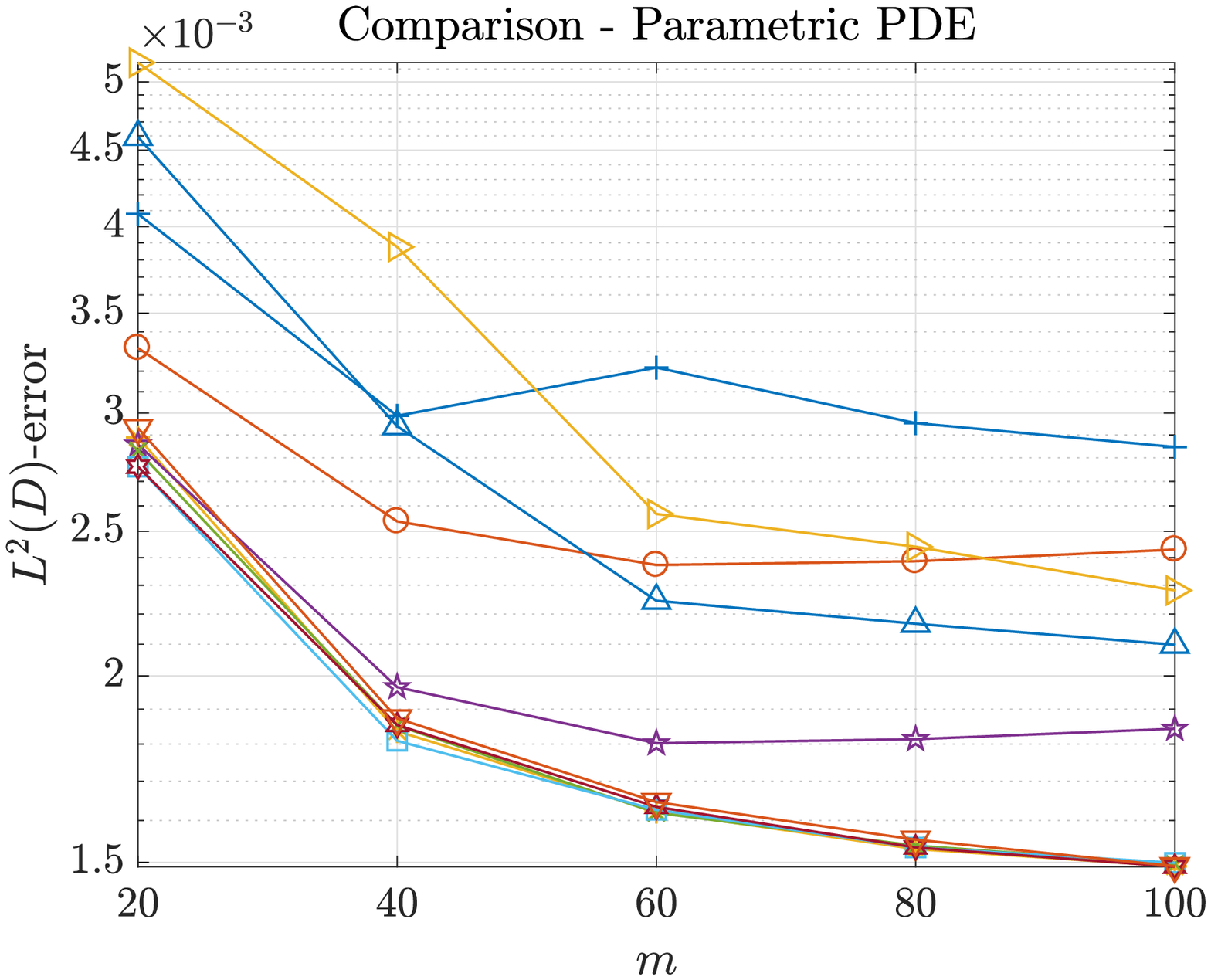} 
\end{tabular}
\caption{\label{fig:UQ_m_vs_err}Comparison among WQCBP, WLASSO, WSR-LASSO, and WLAD-LASSO for the approximation of the quantity of interest $f$ defined in \eqref{eq:function_UQ}.}
\end{figure}
Overall, the decoders have similar performances, in that the recovery error differs by less than one digit. Cross validation is able to achieve the best accuracy level in all cases. Remarkably, WSR-LASSO achieves the best accuracy using the choice \eqref{eq:lambdaWSRLASSO} suggested by the theory. 

\subsubsection{Parametric ODEs: Damped harmonic oscillator}
\label{sec:ODEs}

We study an application to parametric ODEs with random inputs, considering the  equation of a damped harmonic oscillator subject to external forcing
\begin{equation}
\label{eq:DHO}
\begin{cases}
u''_{\bm{t}}(x) + \gamma u'_{\bm{t}}(x) + k u_{\bm{t}}(x) = g \cos(\omega x), \quad \forall x > 0,\\
u_{\bm{t}}(0) = u_0, \quad u_{\bm{t}}'(0) = v_0,
\end{cases}
\end{equation}
where the parameters $\gamma, k , g, \omega, u_0, v_0$ are defined as a function of a random vector $\bm{t}$  uniformly distributed in $D = [-1,1]^6$ as follows:
$$
\begin{array}{lll}
\gamma = 0.1 + 0.02 t_1 \in [0.08, 0.12], 
& k = 0.035 + 0.05 t_2 \in [0.03,0.04], 
& g = 0.1 + 0.02 t_3 \in [0.08,0.12],\\
 \omega = 1 + 0.2 t_4 \in [0.8,1.2], 
 & u_0 = 0.5 + 0.05 t_5 \in [0.45,0.55], 
 & v_0 = 0.05 t_6 \in [-0.05,0.05].
\end{array}
$$
These restrictions make the oscillator always underdamped. We are interested in approximating the following quantity of interest:
\begin{equation}
\label{eq:DHO_QoI}
f(\bm{t}) = u_{\bm{t}}(20).
\end{equation}
An analogous experiment is considered in \cite{Adcock2017compressed}. 

We generate the pointwise samples by solving \eqref{eq:DHO} in \textsc{Matlab$^\text{\textregistered}$} using the command \texttt{ode23} and setting different values for the tolerance  parameter $\texttt{AbsTol} \in\{10^{-1}, 10^{-3}, 10^{-5}\}$ within \texttt{odeset}. We employ the Legendre polynomials as sparsity basis and set $s = 20$ (corresponding to $n = 795$) and $m = 100$. In Fig.~\ref{fig:DHO_par_err} (left), we show the boxplot of the $L^2_\nu$ error as a function of the parameter $\eta$ for WQCBP and WSR-LASSO over 25 runs, where the $L^2_\nu$ error is computed with respect to a high-fidelity reference solution $\bm{x}_{ref}$, approximated via least squares from 10000 random samples with $\texttt{AbsTol}=10^{-14}$.

\begin{figure}[t]
\centering
\includegraphics[width = 7cm]{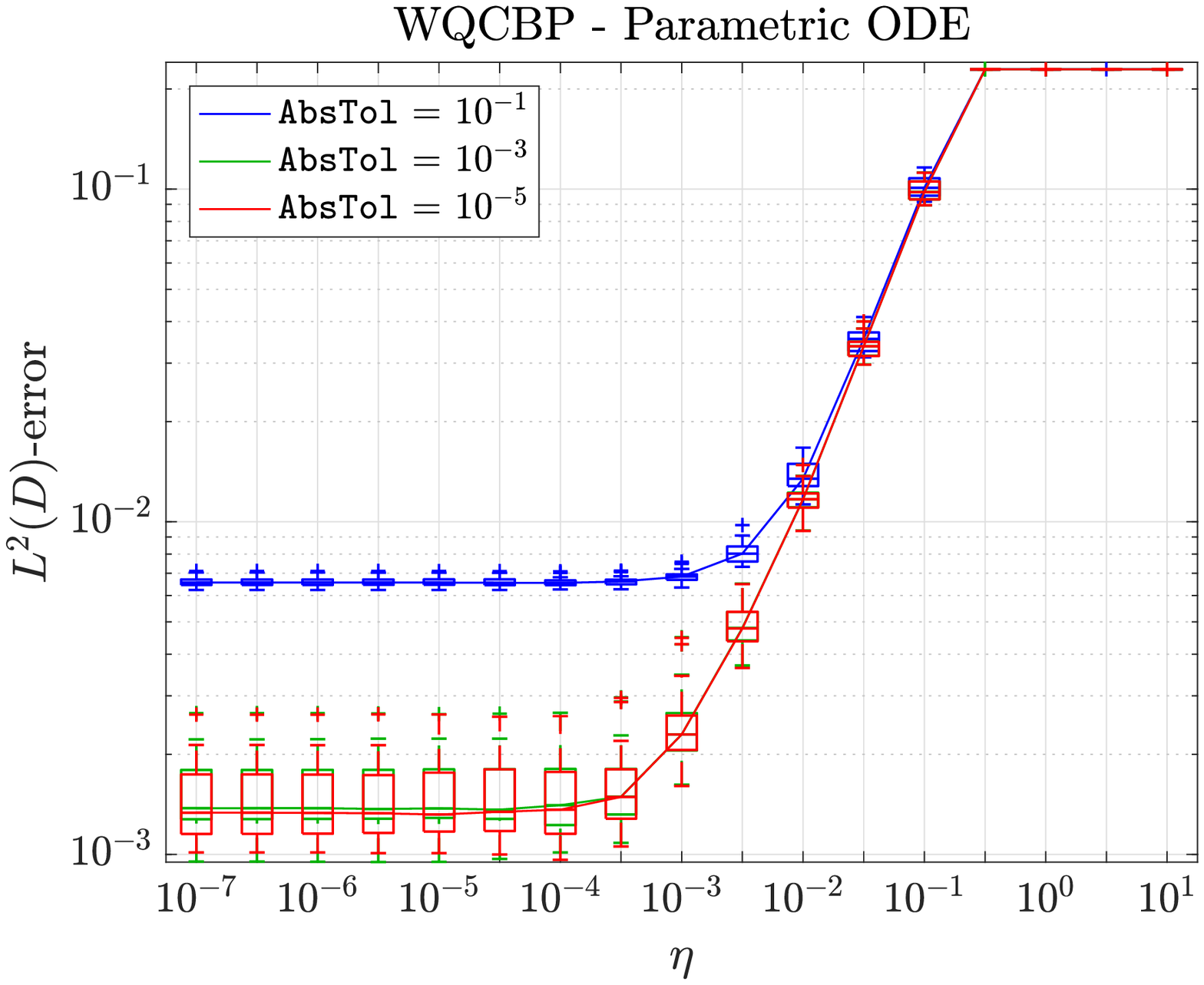}
\includegraphics[width = 7cm]{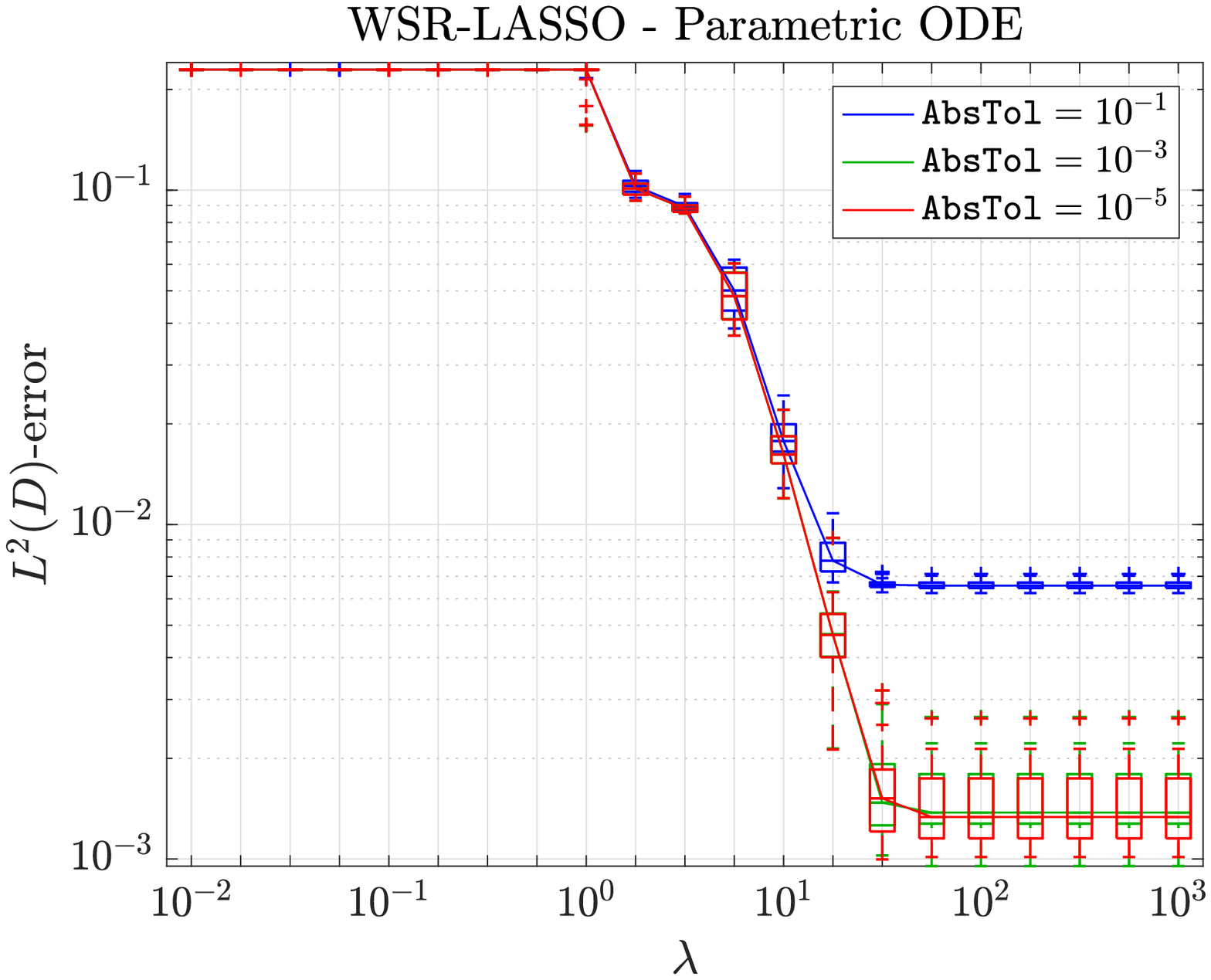}
\caption{\label{fig:DHO_par_err} Boxplot of the $L^2_\nu$ error as a function of the tuning parameter for WQCBP (left) and WSR-LASSO (right) when approximating the quantity of interest \eqref{eq:DHO_QoI} in the damped harmonic oscillator case. }
\end{figure}
Analogously to the case of the parametric PDE (Fig.~\ref{fig:UQ_param_vs_err} top left), we see that there are no strict global minima.  The optimal parameter choices appear to be $\eta \lesssim \| \bm{e} \|_2$ and $\lambda \gtrsim \sqrt{K(s)}$ as in the previous experiment.\footnote{The same phenomenon is observed for WLASSO and WLAD-LASSO, but the plots are not shown here for the sake of brevity.}

\subsubsection{Discussion}\label{ss:discussion}

Both the parametric PDE and ODE problems considered fail to exhibit the strict global minima in the error versus parameter plots as seen in the synthetic examples (Figs.~\ref{fig:param_vs_err_Leg} \& \ref{fig:param_vs_err_Che}).  We conjecture that this stems from the fact that errors produced by the numerical solvers are structured, as opposed to random.  Note that our theoretical recovery guarantees are adversarial, i.e.\ uniform with respect to the $2$-norm $\| \bm{e} \|_{2}$ of the error $\bm{e} \in \mathbb{C}^m$, and therefore cannot explain this behaviour.  Evidently, the structured errors produced by the solvers are quite far from this worst-case scenario.  This may imply that careful parameter tuning is not required in practice, although such a conclusion requires more broad examination.  In particular, understanding this phenomenon from the theoretical viewpoint remains an open problem.

\section{Theoretical analysis}
\label{sec:theory}

This section contains the proofs of the results of \S\ref{sec:rec_err}. 
It is organized in two main parts. First, we prove recovery error estimates for  WQCBP, WLASSO, and WSR-LASSO in \S\ref{sec:l2pen}. Then, in \S\ref{sec:l1pen} we deal with WLAD-LASSO. This section structure depends on the fact that the proof strategy adopted for the WLAD-LASSO contains some additional technicalities  with respect to the other three decoders.

\begin{rmrk}[Proof strategy]
\label{rem:proof_startegy}
In order to prove robust recovery error estimates, we will follow a conceptual roadmap commonly employed in compressed sensing \cite{Foucart2013}. This is composed of the following elements: (LB) lower bound on the sample complexity $m$; (RIP) restricted isometry property for $\mat{A}$; (RNSP) robust null space property for $\mat{A}$; (DB) distance bound; (REE) recovery error estimate. The chain of implications showed is then: 
$$
(\text{LB}) 
\Rightarrow (\text{RIP})^{\text{whp}} 
\Rightarrow (\text{RNSP})^{\text{whp}} 
\Rightarrow (\text{DB})^{\text{whp}} 
\Rightarrow (\text{REE})^{\text{whp}},
$$ 
where the notation $(x)^{\text{whp}}$ indicates that the property $(x)$ holds with high probability.
\end{rmrk}

\subsection{WQCBP, WLASSO, and WSR-LASSO}
\label{sec:l2pen}

Before showing recovery error estimates for WQCBP, WLASSO, and WSR-LASSO, we need to introduce suitable versions of the restricted isometry property and of the robust null space property, adapted to the lower set setting (\S\ref{sec:l2-pen:prelim}). Then, we prove the recovery error estimates for WQCBP, WLASSO, and WSR-LASSO in \S\ref{sec:WQCBP}, \ref{sec:WLASSO}, and \ref{sec:WSRLASSO}, respectively.

\subsubsection{Preliminaries}
\label{sec:l2-pen:prelim}

We first recall the lower restricted isometry property and of lower robust null space property \cite{Chkifa2017}:

\begin{defn}[Lower restricted isometry property] 
\label{def:lowerRIP}
A matrix $\mat{A} \in \mathbb{C}^{m \times n}$ is said to have the \emph{lower restricted isometry property} of order $s$ if there exists a constant $0 < \delta < 1$ such that 
$$
(1-\delta) \|\bm{z}\|_2^2 
\leq \|\mat{A}\bm{z}\|_2^2 
\leq (1+\delta) \|\bm{z}\|_2^2, \quad 
\forall \bm{z} \in \mathbb{C}^n, \; |\supp(\bm{z})|_{\bm{u}} \leq K(s),
$$
where $\supp(\bm{z}) := \{i \in [n]: z_i \neq 0\}$ and $|\supp(\bm{z})|_{\bm{u}}$ is its weighted cardinality defined as in \eqref{eq:def|S|_u}. The smallest constant such that this property holds is called the $s^{\text{th}}$ lower restricted isometry constant of $\mat{A}$ and it is denoted as $\delta_{s,L}$. 
\end{defn}

\begin{defn}[Lower  robust null space  property]
\label{def:lowerNSP}
Given $0 < \rho < 1$ and $\tau > 0$, a matrix $\mat{A} \in \mathbb{C}^{m \times n}$ is said to have the \emph{lower  robust null space property} of order $s$ if
$$
\|\bm{z}_{S}\|_2 \leq \frac{\rho}{\sqrt{K(s)}} \|\bm{z}_{S^c}\|_{1,\bm{u}} + \tau \|\mat{A} \bm{z}\|_2, \quad \forall \bm{z} \in \mathbb{C}^n,
$$
for any $S \subseteq \Lambda$ such that $|S|_{\bm{u}} \leq K(s)$, where $K(s)$ is defined as in \eqref{eq:defK(s)}.
\end{defn}
The lower restricted isometry and the lower robust null space properties of order $s$ are equivalent to the weighted restricted isometry and weighted robust null space properties of order $K(s)$, respectively  (see \cite[Definition 1.3]{Rauhut2016} and \cite[Definition 4.1]{Rauhut2016}).

The following result, corresponding to \cite[Proposition 4.4]{Chkifa2017}, states that the lower restricted isometry property implies the lower robust null space property up to a suitable restriction on the lower restricted isometry constant.

\begin{lem}[Lower restricted isometry property $\Rightarrow$ lower robust null space property] 
\label{lem:LRIP->LRNSP}
Let $s \geq 2$ and $\mat{A} \in \mathbb{C}^{m\times n}$ satisfy the lower restricted isometry property of order $\alpha s$ with constant
$$
\delta = \delta_{\alpha s, L} < \frac{1}{5},
$$
where $\alpha = 2$ if the intrinsic weights $\bm{u}$ arise from the tensor Legendre basis and $\alpha = 3$ if the weights arise from the tensor Chebyshev basis. Then  $\mat{A}$ has the lower robust null space property of order $s$ with
$$
\rho = \frac{4 \delta}{1-\delta} \quad \text{and} \quad \tau = \frac{\sqrt{1+\delta}}{1-\delta}.
$$
\end{lem}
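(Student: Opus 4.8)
The plan is to transcribe into the weighted, lower-set language the classical compressed-sensing implication ``restricted isometry property $\Rightarrow$ robust null space property'' (see, e.g., \cite[Thm.~6.13]{Foucart2013}), using the equivalence --- noted just above the statement --- between the lower properties of order $s$ and the weighted properties of order $K(s)$ from \cite{Rauhut2016}. Thus it suffices to deduce the weighted robust null space property of order $K(s)$ from the weighted restricted isometry property of order $K(\alpha s)$ with constant $\delta = \delta_{\alpha s,L} < 1/5$. Throughout I would lean on two standard consequences of this property: the two-sided bound $\sqrt{1-\delta}\,\|\bm{z}\|_2 \le \|\mat{A}\bm{z}\|_2 \le \sqrt{1+\delta}\,\|\bm{z}\|_2$ whenever $|\supp(\bm{z})|_{\bm{u}} \le K(\alpha s)$, and the associated inner-product estimate $|\langle \mat{A}\bm{v},\mat{A}\bm{w}\rangle| \le \delta\,\|\bm{v}\|_2\|\bm{w}\|_2$ for vectors $\bm{v},\bm{w}$ with disjoint supports satisfying $|\supp(\bm{v})\cup\supp(\bm{w})|_{\bm{u}} \le K(\alpha s)$.

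Fix $\bm{z}\in\mathbb{C}^n$ and $S\subseteq\Lambda$ with $|S|_{\bm{u}}\le K(s)$. The backbone of the argument is a \emph{weighted block decomposition}: order the indices of $\supp(\bm{z})\setminus S$ by decreasing magnitude $|z_{\bm{i}}|$ and split them greedily into consecutive blocks $T_1,T_2,\dots$, each of weighted cardinality comparable to $K(s)$, so that every entry of $\bm{z}_{T_{j+1}}$ is dominated by every entry of $\bm{z}_{T_j}$. Setting $T_0 := S\cup T_1$, the restricted isometry property applies to $\bm{z}_{T_0}$ (since $|T_0|_{\bm{u}}\lesssim 2K(s)$) and, for each $j\ge 2$, to the union $T_0\cup T_j$ (weighted cardinality $\lesssim 3K(s)$); this last requirement is exactly what forces $\alpha$ to be large enough that $K(\alpha s)\gtrsim 3K(s)$, and via the growth $K(\alpha s)\asymp\alpha^{\gamma}K(s)$ of Lemma~\ref{lem:K(s)bounds} it singles out $\alpha=2$ for Legendre ($\gamma=2$) and $\alpha=3$ for Chebyshev ($\gamma=\log_2 3$). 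Writing $\mat{A}\bm{z}_{T_0}=\mat{A}\bm{z}-\sum_{j\ge 2}\mat{A}\bm{z}_{T_j}$ and using the two consequences above gives
\[
(1-\delta)\|\bm{z}_{T_0}\|_2^2 \le \|\mat{A}\bm{z}_{T_0}\|_2^2 = \Big\langle \mat{A}\bm{z}_{T_0},\, \mat{A}\bm{z}-\textstyle\sum_{j\ge 2}\mat{A}\bm{z}_{T_j}\Big\rangle \le \|\bm{z}_{T_0}\|_2\Big(\sqrt{1+\delta}\,\|\mat{A}\bm{z}\|_2 + \delta\textstyle\sum_{j\ge 2}\|\bm{z}_{T_j}\|_2\Big),
\]
and dividing by $\|\bm{z}_{T_0}\|_2$ while using $\|\bm{z}_S\|_2\le\|\bm{z}_{T_0}\|_2$ yields $\|\bm{z}_S\|_2 \le \frac{\sqrt{1+\delta}}{1-\delta}\|\mat{A}\bm{z}\|_2 + \frac{\delta}{1-\delta}\sum_{j\ge 2}\|\bm{z}_{T_j}\|_2$.

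It then remains to convert the tail $\sum_{j\ge 2}\|\bm{z}_{T_j}\|_2$ into a weighted $\ell^1$ quantity through a \emph{weighted shifting inequality}: because $\bm{z}_{T_{j+1}}$ is dominated entrywise by $\bm{z}_{T_j}$ and each block carries weighted cardinality $\gtrsim K(s)$, one obtains a bound of the form $\sum_{j\ge 2}\|\bm{z}_{T_j}\|_2 \le 4\,K(s)^{-1/2}\|\bm{z}_{S^c}\|_{1,\bm{u}}$, the constant $4$ being the weighted counterpart of the constant $1$ familiar from the unweighted theory. Substituting produces the lower robust null space property with $\rho=\frac{4\delta}{1-\delta}$ and $\tau=\frac{\sqrt{1+\delta}}{1-\delta}$, and one checks that $\delta<1/5$ is precisely the threshold making $\rho<1$, since $\frac{4\delta}{1-\delta}=1$ exactly at $\delta=1/5$.

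I expect the main obstacle to be the weighted block decomposition together with its shifting inequality. In the unweighted case the complement splits into blocks of equal cardinality $s$, and the shifting constant is simply $1$; here a single index can inflate the weighted cardinality by up to $u_{\bm{i}}^2$, so the blocks cannot be made to have exactly weighted size $K(s)$, and the interplay between the (unweighted) $\ell^2$ norm of a block and the $\ell^1_{\bm{u}}$ norm of its predecessor must be handled with care. Controlling this overshoot while simultaneously keeping $|T_0\cup T_j|_{\bm{u}}\le K(\alpha s)$ --- and extracting the clean constant $4$ --- is the delicate part, and it is what ultimately dictates the admissible orders $\alpha=2$ and $\alpha=3$ through the growth of $K(\cdot)$ recorded in Lemma~\ref{lem:K(s)bounds}.
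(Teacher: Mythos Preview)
The paper does not supply its own proof of this lemma: it simply attributes the result to \cite[Proposition~4.4]{Chkifa2017} and states it without argument. So there is nothing in the paper to compare your proposal against line by line.

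That said, your sketch is the right one, and in fact it mirrors almost exactly the argument the paper \emph{does} write out for the 2-level analogue, Theorem~\ref{thm:2-levRIP->2-levNSP}: greedy weighted block decomposition of $S^c$, the RIP lower bound on $\|\mat{A}\bm{z}_{T_0}\|_2^2$, the disjoint-support inner-product estimate to peel off the tail blocks, and then a weighted shifting inequality (with the constant $4$ coming from $K(s)\ge\tfrac{4}{3}\|\bm{u}_\Lambda\|_\infty^2$, cf.\ Lemma~\ref{lem:norm_u_inf_bound}) to pass from $\sum_{j\ge 2}\|\bm{z}_{T_j}\|_2$ to $K(s)^{-1/2}\|\bm{z}_{S^c}\|_{1,\bm{u}}$. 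One small slip: the blocks should be formed according to the nonincreasing rearrangement of $|z_{\bm{i}}|/u_{\bm{i}}$, not of $|z_{\bm{i}}|$; this is what makes the weighted shifting step go through (compare the proof of Theorem~\ref{thm:2-levRIP->2-levNSP}, where the ordering is by $|x_j|u_j^{-1}$). With that correction your outline is sound.
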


In \cite[Theorem 2.2]{Chkifa2017} it is proved that a sufficient condition for the lower restricted isometry property of order $s$ to hold with high probability is that the sample complexity $m$ must scale linearly with the intrinsic sparsity $K(s)$ up to a polylogarithmic factor. This result is stated below.

\begin{thm}[Sample complexity $\Rightarrow$ lower restricted isometry property]
\label{thm:sampleRIP}
Fix $0<\varepsilon<1$, $0 < \delta < 1$, let $\{\phi_{\bm{i}}\}_{\bm{i}\in\mathbb{N}_0^d}$ be as in \eqref{eq:defbasis} and $\bm{u}$ be the intrinsic weights defined in \eqref{eq:defintrinsic} and suppose that
\begin{equation}
m \gtrsim K(s)  \cdot L(s,n,\delta,\varepsilon),
\end{equation}
 where $K(s)$ is as in \eqref{eq:defK(s)} and where $L = L(s,n,\delta,\varepsilon)$ is a polylogarithmic factor defined as
\begin{equation}
\label{eq:defpolylog}
L
= \frac{1}{\delta^2}\ln\left(\frac{K(s)}{\delta^2}\right)
\max\left\{\frac{1}{\delta^4} \ln\left(\frac{K(s)}{\delta^2}\ln\left(\frac{K(s)}{\delta^2}\right)\right) \ln(n),
\frac{1}{\delta}\ln\left(\frac{1}{\delta\varepsilon} \ln\left(\frac{K(s)}{\delta^2}\right)\right)\right\}.
\end{equation} 
Then with probability at least $1 - \varepsilon$ the design matrix $\mat{A}$ defined in \eqref{eq:defA} satisfies the lower restricted isometry property of order $s$ with constant $\delta_{s,L} \leq \delta$.
\end{thm}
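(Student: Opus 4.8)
The plan is to recognize Theorem~\ref{thm:sampleRIP} as a weighted restricted isometry property (RIP) estimate for a random sampling matrix drawn from a weighted bounded orthonormal system, and to establish it via the standard empirical-process route. As a first step I would invoke the equivalence recorded after Definition~\ref{def:lowerNSP}: the lower restricted isometry property of order $s$ is identical to the weighted restricted isometry property of order $K(s)$. It therefore suffices to control, with high probability, the quantity
$$
\delta_{s,L} = \sup_{\bm{z} \in T} \left| \|\mat{A}\bm{z}\|_2^2 - \|\bm{z}\|_2^2 \right|,
\qquad
T := \{\bm{z} \in \mathbb{C}^n : \|\bm{z}\|_2 \leq 1, \; |\supp(\bm{z})|_{\bm{u}} \leq K(s)\}.
$$
The key structural input is that the intrinsic weights satisfy $|\phi_{\bm{i}}(\bm{t})| \leq \|\phi_{\bm{i}}\|_{L^\infty} = u_{\bm{i}}$ for every $\bm{t}$, so that $\{\phi_{\bm{i}}\}$ together with $\bm{u}$ forms a weighted bounded orthonormal system; orthonormality of the basis and the i.i.d.\ sampling~\eqref{eq:rand_samples} give $\Expe \|\mat{A}\bm{z}\|_2^2 = \|\bm{z}\|_2^2$, so $\delta_{s,L}$ is the supremum of a centered empirical process indexed by $T$.

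The heart of the argument is to bound $\Expe\, \delta_{s,L}$. First I would symmetrize, replacing the process by a Rademacher (or Gaussian) chaos, thereby reducing matters to estimating the $\gamma_2$-functional of $T$ with respect to the two relevant metrics induced by the rows $\bm{a}_i$ of $\sqrt{m}\,\mat{A}$. Bounding these metrics through the weighted boundedness $|\phi_{\bm{i}}(\bm{t})| \leq u_{\bm{i}}$ and the weighted-cardinality constraint $|\supp(\bm{z})|_{\bm{u}} \leq K(s)$, a generic chaining (Dudley-type) estimate yields a bound of the form $\Expe\,\delta_{s,L} \lesssim \max\{\delta,\delta^2\}$ as soon as $m$ exceeds $K(s)$ times the polylogarithmic factor $L$ in~\eqref{eq:defpolylog}; here the $\ln(n)$ dependence enters through the covering numbers of $T$, while the powers of $1/\delta$ arise from balancing the linear and quadratic chaining contributions at scale $\delta$. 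I would then upgrade this expectation bound to the high-probability statement $\delta_{s,L}\le \delta$ by applying a Bernstein-type deviation inequality for suprema of such symmetric empirical processes tailored to bounded orthonormal systems; the tail contributes the factor $\ln(1/(\delta\varepsilon))$ appearing in the second branch of the maximum in~\eqref{eq:defpolylog}, giving failure probability at most $\varepsilon$.

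I expect the main obstacle to be the chaining estimate for $\Expe\,\delta_{s,L}$. Controlling the $\gamma_2$-functional of the weighted-sparse set $T$ \emph{uniformly} over all admissible supports of weighted cardinality at most $K(s)$, and tracking the precise dependence on $\delta$, $K(s)$, $n$, and $\varepsilon$ so as to recover exactly the polylogarithmic factor~\eqref{eq:defpolylog}, is the genuinely technical part of the proof. By contrast, the reduction to the weighted setting, the symmetrization, and the final concentration step are comparatively routine once the weighted bounded orthonormal structure and the normalization $1/\sqrt{m}$ in~\eqref{eq:defA} are in place.
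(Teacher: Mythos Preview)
Your sketch is correct and follows the standard empirical-process route for weighted RIP in bounded orthonormal systems (symmetrize, bound the expectation via generic chaining on the weighted-sparse set $T$, then concentrate via a Bernstein-type tail bound). However, note that the paper does not actually prove this theorem: it is quoted verbatim from \cite[Theorem~2.2]{Chkifa2017} (see the sentence immediately preceding the statement). The argument you outline is essentially the one underlying that reference and, more generally, the weighted RIP analysis of \cite{Rauhut2016}; in particular, the equivalence you invoke between the lower RIP of order $s$ and the weighted RIP of order $K(s)$ is exactly how \cite{Chkifa2017} reduces to the weighted setting. So you are reconstructing the cited proof rather than departing from it, and your identification of the chaining bound on $\Expe\,\delta_{s,L}$ as the technical core, with the $\ln(n)$ coming from covering numbers and the $\ln(1/(\delta\varepsilon))$ from the tail, matches the structure of the original argument.
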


A fundamental feature of the lower (or weighted) robust null space property is that it implies a suitable $\ell^1_{\bm{u}}$ distance bound. This is stated in the following theorem, an immediate consequence of \cite[Theorem 4.2]{Rauhut2016} (notice that $\sigma_{K(s)}(\bm{z})_{1,\bm{u}} \leq \sigma_{s,L}(\bm{z})_{1,\bm{u}}$, for any $\bm{z} \in \mathbb{C}^n$). This can also be viewed as a corollary of the  Theorem~\ref{thm:2-lev_dist_bound}, which deals with the framework of 2-level sparsity. 
\begin{thm}[Lower robust null space property $\Rightarrow$ $\ell^1_{\bm{u}}$ distance bound]
\label{thm:1-lev_dist_bound}
If the matrix $\mat{A} \in \mathbb{C}^{m \times n}$ has the lower null space property of order $s$, then for every $\bm{z}, \hat{\bm{z}} \in \mathbb{C}^{n}$ the following holds:
$$
\|\bm{z}-\hat{\bm{z}}\|_{1,\bm{u}} 
\leq \frac{1+\rho}{1-\rho} 
(2 \sigma_{s,L}(\bm{z})_{1,\bm{u}} + \|\hat{\bm{z}}\|_{1,\bm{u}} - \|\bm{z}\|_{1,\bm{u}})
+ \frac{2\tau \sqrt{K(s)}}{1-\rho} \|\mat{A}(\hat{\bm{z}}-\bm{z})\|_2.
$$
\end{thm}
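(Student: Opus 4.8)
The plan is to adapt to the weighted lower-set setting the classical argument showing that a robust null space property yields an $\ell^1$ distance bound (cf.\ the unweighted prototype in \cite{Foucart2013}). Write $\bm{v} := \hat{\bm{z}} - \bm{z}$, so that the quantity to estimate is $\|\bm{v}\|_{1,\bm{u}}$ and the measurement term on the right is $\|\mat{A}\bm{v}\|_2$. The first step is to fix a good index set on which to split $\bm{v}$. Since approximation in $\Sigma_{s,L}$ is a finite combinatorial problem, there is a lower set $S$ with $|S| \leq s$ realizing the best $s$-term approximation, i.e.\ $\|\bm{z}_{S^c}\|_{1,\bm{u}} = \sigma_{s,L}(\bm{z})_{1,\bm{u}}$. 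Because every lower set of cardinality at most $s$ is contained in $\Lambda = \Lambda^{\text{HC}}_{d,s}$ by \eqref{eq:defHC}, we have $S \subseteq \Lambda$, and by the very definition \eqref{eq:defK(s)} of $K(s)$ its weighted cardinality satisfies $|S|_{\bm{u}} \leq K(s)$. This is exactly the admissibility condition required to invoke the lower robust null space property of Definition~\ref{def:lowerNSP} on $S$.

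Second, I would carry out the standard bookkeeping producing the term $\|\hat{\bm{z}}\|_{1,\bm{u}} - \|\bm{z}\|_{1,\bm{u}}$. Splitting $\|\hat{\bm{z}}\|_{1,\bm{u}} = \|\hat{\bm{z}}_S\|_{1,\bm{u}} + \|\hat{\bm{z}}_{S^c}\|_{1,\bm{u}}$, applying the reverse triangle inequality on each piece via $\hat{\bm{z}} = \bm{z} + \bm{v}$, and using $\|\bm{z}_S\|_{1,\bm{u}} = \|\bm{z}\|_{1,\bm{u}} - \|\bm{z}_{S^c}\|_{1,\bm{u}}$ together with $\|\bm{z}_{S^c}\|_{1,\bm{u}} = \sigma_{s,L}(\bm{z})_{1,\bm{u}}$, I obtain the cone-type inequality
$$\|\bm{v}_{S^c}\|_{1,\bm{u}} \leq \|\bm{v}_S\|_{1,\bm{u}} + \bigl(2\sigma_{s,L}(\bm{z})_{1,\bm{u}} + \|\hat{\bm{z}}\|_{1,\bm{u}} - \|\bm{z}\|_{1,\bm{u}}\bigr).$$
I denote the parenthesised quantity by $\xi$ for brevity. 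Note that $\hat{\bm{z}}$ need not be a minimizer: this inequality is purely algebraic and holds for arbitrary $\bm{z},\hat{\bm{z}}$.

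Third, I would estimate $\|\bm{v}_S\|_{1,\bm{u}}$. Since the weight exponent $2-p$ equals $1$ when $p=1$, the weighted Cauchy--Schwarz inequality reads $\|\bm{v}_S\|_{1,\bm{u}} = \sum_{\bm{i}\in S} u_{\bm{i}}|v_{\bm{i}}| \leq \sqrt{|S|_{\bm{u}}}\,\|\bm{v}_S\|_2 \leq \sqrt{K(s)}\,\|\bm{v}_S\|_2$, where $\|\cdot\|_2 = \|\cdot\|_{2,\bm{u}}$ is the plain $\ell^2$ norm appearing in Definition~\ref{def:lowerNSP}. Applying that definition to the admissible set $S$ bounds $\|\bm{v}_S\|_2$ by $\rho\,\|\bm{v}_{S^c}\|_{1,\bm{u}}/\sqrt{K(s)} + \tau\|\mat{A}\bm{v}\|_2$, and combining the two gives $\|\bm{v}_S\|_{1,\bm{u}} \leq \rho\|\bm{v}_{S^c}\|_{1,\bm{u}} + \tau\sqrt{K(s)}\|\mat{A}\bm{v}\|_2$. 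Substituting the cone inequality from the previous step yields a scalar inequality for $\|\bm{v}_S\|_{1,\bm{u}}$ whose coefficient on the right is $\rho<1$, so it can be solved to obtain $(1-\rho)\|\bm{v}_S\|_{1,\bm{u}} \leq \rho\,\xi + \tau\sqrt{K(s)}\|\mat{A}\bm{v}\|_2$.

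Finally, I would assemble the pieces: writing $\|\bm{v}\|_{1,\bm{u}} = \|\bm{v}_S\|_{1,\bm{u}} + \|\bm{v}_{S^c}\|_{1,\bm{u}} \leq 2\|\bm{v}_S\|_{1,\bm{u}} + \xi$ and inserting the bound on $\|\bm{v}_S\|_{1,\bm{u}}$, the prefactor of $\xi$ collapses to $\tfrac{2\rho}{1-\rho} + 1 = \tfrac{1+\rho}{1-\rho}$ while the measurement term acquires the factor $\tfrac{2\tau\sqrt{K(s)}}{1-\rho}$, which is precisely the claimed inequality once one recalls $\|\bm{z}-\hat{\bm{z}}\|_{1,\bm{u}} = \|\bm{v}\|_{1,\bm{u}}$. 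I do not anticipate a genuine obstacle: the proof is the weighted mirror of the standard compressed sensing argument, and the only points demanding care are verifying that the optimal lower support $S$ is admissible for the null space property (which rests entirely on the definition of $K(s)$ and on $S\subseteq\Lambda$) and keeping the Cauchy--Schwarz constant $\sqrt{K(s)}$ consistent with the $1/\sqrt{K(s)}$ normalisation built into Definition~\ref{def:lowerNSP}, so that the two $\sqrt{K(s)}$ factors cancel to leave the $\rho\|\bm{v}_{S^c}\|_{1,\bm{u}}$ term dimensionally correct.
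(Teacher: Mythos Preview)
Your proposal is correct and follows the same standard argument the paper relies on: the paper does not give an independent proof of this theorem but cites \cite[Theorem~4.2]{Rauhut2016} together with the observation $\sigma_{K(s)}(\bm{z})_{1,\bm{u}}\le\sigma_{s,L}(\bm{z})_{1,\bm{u}}$, and also notes that it is the one-level specialization of Theorem~\ref{thm:2-lev_dist_bound}, whose proof is precisely the weighted null-space-property bookkeeping you carry out (the only cosmetic difference being that there the inequality is solved for the $T^c$-part rather than the $S$-part).
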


The next step is to convert an $\ell_\bm{u}^1$ error estimate  into an $L^2_\nu$ error estimate. In order to do this, we need to recall two technical lemmas. Namely, an upper bound on the $\ell^\infty$ norm of the intrinsic weights  and a weighted version of the Stechkin inequality. Their proofs can be found in \cite[Lemma 4.1]{Chkifa2017} and \cite[Theorem 3.2]{Rauhut2016}, respectively.
\begin{lem}[$\ell^\infty$ bound on the intrinsic weights]
\label{lem:norm_u_inf_bound}
For any $s \geq 2 $ and $d \geq 1$, let  $\Lambda=\Lambda^{\textnormal{HC}}_{d,s}$. Then
$$
\|\bm{u}_{\Lambda}\|_\infty^2 \leq \frac34 K(s),
$$
where $(u_{\bm{i}})_{\bm{i} \in \mathbb{N}_0^d}$ are the weights \eqref{eq:defintrinsic} associated with the tensor Chebyshev or Legendre polynomials.
\end{lem}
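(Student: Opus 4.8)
The plan is to bound the single largest weight by the weighted cardinality of an associated rectangular lower set, and then invoke the definition of $K(s)$. Since $\|\bm{u}_\Lambda\|_\infty^2 = \max_{\bm{i} \in \Lambda} u_{\bm{i}}^2$, it suffices to show $u_{\bm{i}}^2 \le \tfrac34 K(s)$ for every $\bm{i} \in \Lambda = \Lambda^{\mathrm{HC}}_{d,s}$. First I would fix such an $\bm{i}$ and consider its \emph{rectangular hull} $R_{\bm{i}} := \{\bm{j} \in \mathbb{N}_0^d : \bm{j} \le \bm{i}\}$ (componentwise). This set is lower by construction, and its cardinality is exactly $|R_{\bm{i}}| = \prod_{\ell=1}^d (i_\ell+1) \le s$, the last inequality being precisely the membership condition $\bm{i} \in \Lambda^{\mathrm{HC}}_{d,s}$ from \eqref{eq:defHC}. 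Thus $R_{\bm{i}}$ is a lower set of cardinality at most $s$, so by the definition \eqref{eq:defK(s)} of $K(s)$ we have $|R_{\bm{i}}|_{\bm{u}} \le K(s)$.

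The heart of the argument is to compare $u_{\bm{i}}^2$ to $|R_{\bm{i}}|_{\bm{u}} = \sum_{\bm{j} \le \bm{i}} u_{\bm{j}}^2$. Because the intrinsic weights \eqref{eq:intrinsicChebLeg} factorize across coordinates, so does this sum. For the Legendre basis, using $\sum_{j=0}^{i_\ell}(2j+1) = (i_\ell+1)^2$, one gets $|R_{\bm{i}}|_{\bm{u}} = \prod_\ell (i_\ell+1)^2$, whereas $u_{\bm{i}}^2 = \prod_\ell (2 i_\ell + 1)$; for the Chebyshev basis, noting that the $\ell$-th per-coordinate sum contributes $1$ at $j_\ell = 0$ and $2$ at each of the $i_\ell$ positive values, one gets $|R_{\bm{i}}|_{\bm{u}} = \prod_\ell (2 i_\ell +1)$, whereas $u_{\bm{i}}^2 = \prod_\ell 2^{\mathbbm{1}\{i_\ell > 0\}}$. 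In both cases the ratio $u_{\bm{i}}^2 / |R_{\bm{i}}|_{\bm{u}}$ is a product of per-coordinate factors, each equal to $1$ when $i_\ell = 0$ and at most $\tfrac34$ when $i_\ell \ge 1$: for Legendre one checks $\tfrac{2i+1}{(i+1)^2} \le \tfrac34$ for $i \ge 1$ via $(3i+1)(i-1) \ge 0$ (and $\le 1$ always, via $0 \le i^2$); for Chebyshev $\tfrac{2}{2i+1} \le \tfrac23 \le \tfrac34$.

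To finish, if $\bm{i} \ne \bm{0}$ then at least one factor is $\le \tfrac34$ and all the others are $\le 1$, so $u_{\bm{i}}^2 \le \tfrac34 |R_{\bm{i}}|_{\bm{u}} \le \tfrac34 K(s)$. The only remaining case is $\bm{i} = \bm{0}$, where $u_{\bm{0}}^2 = 1$; here I would observe that since $s \ge 2$ some two-element lower set is admissible in \eqref{eq:defK(s)}, and since $u_{\bm{j}}^2 \ge 1$ for all $\bm{j}$ this forces $K(s) \ge 2$, whence $u_{\bm{0}}^2 = 1 \le \tfrac34 K(s)$. Taking the maximum over $\bm{i} \in \Lambda$ then yields $\|\bm{u}_\Lambda\|_\infty^2 \le \tfrac34 K(s)$. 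The only mild subtlety is the degenerate index $\bm{i} = \bm{0}$, which the coordinatewise bound does not cover and must be dispatched separately using $s \ge 2$; the conceptual step is recognizing that $R_{\bm{i}}$ is the right lower set to bracket the weight, after which everything reduces to the elementary per-coordinate inequalities above.
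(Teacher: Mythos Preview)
Your argument is correct. The paper itself does not supply a proof of this lemma; it simply cites \cite[Lemma 4.1]{Chkifa2017}. Your approach---bounding $u_{\bm{i}}^2$ by the weighted cardinality of the rectangular hull $R_{\bm{i}}$, which is a lower set of cardinality at most $s$ precisely by the hyperbolic-cross membership condition, and then reducing to coordinatewise ratios---is exactly the natural route and coincides with the argument in the cited reference. The per-coordinate inequalities and the separate treatment of $\bm{i}=\bm{0}$ via $s\geq 2$ are all handled correctly.
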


\begin{lem}[Weighted Stechkin inequality]
\label{lem:stechkin}
For any $\bm{z} \in \ell^1_{\bm{u}}(\mathbb{N}_0^d)$ and $K \in \mathbb{N}$, with $K > \|\bm{u}\|_\infty^2$, the following holds
$$
\inf\{\|\bm{z}-\bm{z}'\|_2 : |\supp(\bm{z}')|_{\bm{u}} \leq K\} \leq \frac{\|\bm{z}\|_{1,\bm{u}}}{\sqrt{K - \|\bm{u}\|_\infty^2}}.
$$
\end{lem}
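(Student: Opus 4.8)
The statement is a weighted analogue of the classical Stechkin inequality, and the plan is to prove it constructively by exhibiting an explicit vector $\bm{z}'$ attaining the bound, rather than reasoning about the infimum abstractly. The natural candidate is a restriction $\bm{z}_S$ of $\bm{z}$ to a well-chosen index set $S$ with $|S|_{\bm{u}} \le K$. Since the constraint budgets the \emph{weighted} cardinality $|S|_{\bm{u}} = \sum_{\bm{i} \in S} u_{\bm{i}}^2$ while the error $\|\bm{z} - \bm{z}_S\|_2^2 = \sum_{\bm{i} \notin S} |z_{\bm{i}}|^2$ is measured in the unweighted $\ell^2$ norm, selecting $S$ is a knapsack-type problem in which index $\bm{i}$ carries cost $u_{\bm{i}}^2$ and value $|z_{\bm{i}}|^2$. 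This suggests ordering the indices by decreasing value-to-cost ratio, i.e.\ by decreasing $|z_{\bm{i}}|/u_{\bm{i}}$, and including them greedily.

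Concretely, I would relabel the nonzero entries so that $|z_1|/u_1 \ge |z_2|/u_2 \ge \cdots$, let $S = \{1, \ldots, N\}$ where $N$ is the largest integer with $\sum_{j=1}^N u_j^2 \le K$, and set $t := |z_{N+1}|/u_{N+1}$, the ratio of the first excluded index. (If all entries fit, the error is zero and the bound is trivial; if $\bm{z}$ has infinite support then, since $u_{\bm{i}} \ge 1$, the partial sums of $u_j^2$ diverge and such an $N$ always exists.) Two elementary consequences of the sorting then drive the estimate. First, every excluded index satisfies $|z_{\bm{i}}| \le t\, u_{\bm{i}}$, hence $|z_{\bm{i}}|^2 \le t\, u_{\bm{i}} |z_{\bm{i}}|$, and summing over $\bm{i} \notin S$ gives $\|\bm{z} - \bm{z}_S\|_2^2 \le t \|\bm{z}\|_{1,\bm{u}}$. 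Second, every included index satisfies $|z_{\bm{j}}| \ge t\, u_{\bm{j}}$, so $\|\bm{z}\|_{1,\bm{u}} \ge \sum_{\bm{j} \in S} u_{\bm{j}} |z_{\bm{j}}| \ge t\, |S|_{\bm{u}}$, whence $t \le \|\bm{z}\|_{1,\bm{u}} / |S|_{\bm{u}}$. Combining the two yields $\|\bm{z} - \bm{z}_S\|_2^2 \le \|\bm{z}\|_{1,\bm{u}}^2 / |S|_{\bm{u}}$.

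The final and most delicate step is to replace $|S|_{\bm{u}}$ in the denominator by the claimed $K - \|\bm{u}\|_\infty^2$. This is precisely where the greedy stopping rule and the hypothesis $K > \|\bm{u}\|_\infty^2$ enter: by maximality of $N$, adding index $N+1$ would overflow the budget, so $|S|_{\bm{u}} + u_{N+1}^2 > K$, and since $u_{N+1}^2 \le \|\bm{u}\|_\infty^2$ we obtain $|S|_{\bm{u}} > K - \|\bm{u}\|_\infty^2 > 0$. Substituting and taking square roots gives $\|\bm{z} - \bm{z}_S\|_2 \le \|\bm{z}\|_{1,\bm{u}}/\sqrt{K - \|\bm{u}\|_\infty^2}$, and since $|\supp(\bm{z}_S)|_{\bm{u}} = |S|_{\bm{u}} \le K$ the vector $\bm{z}' = \bm{z}_S$ is admissible in the infimum, which closes the argument. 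The main obstacle is conceptual rather than computational: recognizing that the correct ordering is by the ratio $|z_{\bm{i}}|/u_{\bm{i}}$ (and not by magnitude, as in the unweighted case), and quantifying the at most $\|\bm{u}\|_\infty^2$ of budget wasted by the terminal greedy step, which is exactly the origin of the $\|\bm{u}\|_\infty^2$ correction in the denominator.
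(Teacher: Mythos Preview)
Your argument is correct and is precisely the standard proof of the weighted Stechkin inequality. The paper does not supply its own proof of this lemma; it simply cites \cite[Theorem~3.2]{Rauhut2016}, and your greedy construction---sorting by the ratio $|z_{\bm{i}}|/u_{\bm{i}}$, filling $S$ until the weighted budget $K$ would overflow, and bounding the wasted budget by $\|\bm{u}\|_\infty^2$---is exactly the approach used there.
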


Thanks to Lemma~\ref{lem:norm_u_inf_bound}, Lemma~\ref{lem:stechkin}, we are able to give a way to convert an $\ell_\bm{u}^1$ error estimate (on the coefficients) into an $L^2_\nu$ error estimate (on the function) under the lower robust null space property. 
\begin{lem}[Lower robust null space property $\Rightarrow$ $\ell_\bm{u}^1\to L^2_\nu$ estimate conversion]
\label{lem:l1tol2}
Let $\hat{\bm{x}}_\Lambda$ be the output of any decoder and assume that $\mat{A}$ has the lower robust null space property of order $s$. Then,
$$
\|f-\tilde{f}\|_{L^2_\nu}
\leq \|f-f_\Lambda\|_{L^2_\nu} 
+ \frac{2+\rho}{\sqrt{K(s)}}\|\bm{x}_\Lambda - \hat{\bm{x}}_\Lambda\|_{1,\bm{u}}
+ \tau \|\mat{A}(\bm{x}_\Lambda - \hat{\bm{x}}_\Lambda)\|_2.
$$
where $\tilde{f}$ is as in \eqref{eq:ftilde}.
\end{lem}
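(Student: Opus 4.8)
The plan is to reduce the $L^2_\nu$ function-space error to an $\ell^2$ error on the coefficient vector $\bm{v} := \bm{x}_\Lambda - \hat{\bm{x}}_\Lambda \in \mathbb{C}^n$, and then to control $\|\bm{v}\|_2$ by combining the weighted Stechkin inequality (Lemma~\ref{lem:stechkin}) with the lower robust null space property. First I would split, via the triangle inequality,
$$
\|f - \tilde{f}\|_{L^2_\nu} \leq \|f - f_\Lambda\|_{L^2_\nu} + \|f_\Lambda - \tilde{f}\|_{L^2_\nu},
$$
which produces the first term in the claimed bound directly. Since $f_\Lambda$ and $\tilde{f}$ are both supported on $\Lambda$ and $\{\phi_{\bm{i}}\}_{\bm{i} \in \Lambda}$ is orthonormal in $L^2_\nu$, Parseval's identity gives $\|f_\Lambda - \tilde{f}\|_{L^2_\nu} = \|\bm{v}\|_2$. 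It therefore remains to establish the purely coefficient-space estimate $\|\bm{v}\|_2 \leq \frac{2+\rho}{\sqrt{K(s)}}\|\bm{v}\|_{1,\bm{u}} + \tau\|\mat{A}\bm{v}\|_2$.

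To prove this, I would choose $S \subseteq \Lambda$ to be the support of a best weighted $K(s)$-term $\ell^2$-approximation to $\bm{v}$, so that $|S|_{\bm{u}} \leq K(s)$ and $\|\bm{v}_{S^c}\|_2$ equals the infimum appearing in Lemma~\ref{lem:stechkin} with $K = K(s)$. Applying that lemma together with the bound $\|\bm{u}_\Lambda\|_\infty^2 \leq \tfrac34 K(s)$ from Lemma~\ref{lem:norm_u_inf_bound} (which yields $\sqrt{K(s) - \|\bm{u}_\Lambda\|_\infty^2} \geq \tfrac12\sqrt{K(s)}$) controls the tail:
$$
\|\bm{v}_{S^c}\|_2 \leq \frac{\|\bm{v}\|_{1,\bm{u}}}{\sqrt{K(s) - \|\bm{u}_\Lambda\|_\infty^2}} \leq \frac{2}{\sqrt{K(s)}}\|\bm{v}\|_{1,\bm{u}}.
$$
Because $|S|_{\bm{u}} \leq K(s)$, the lower robust null space property (Definition~\ref{def:lowerNSP}) applies with this same $S$ and bounds the head as $\|\bm{v}_S\|_2 \leq \frac{\rho}{\sqrt{K(s)}}\|\bm{v}_{S^c}\|_{1,\bm{u}} + \tau\|\mat{A}\bm{v}\|_2 \leq \frac{\rho}{\sqrt{K(s)}}\|\bm{v}\|_{1,\bm{u}} + \tau\|\mat{A}\bm{v}\|_2$, using $\|\bm{v}_{S^c}\|_{1,\bm{u}} \leq \|\bm{v}\|_{1,\bm{u}}$. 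Summing the head and tail estimates via $\|\bm{v}\|_2 \leq \|\bm{v}_S\|_2 + \|\bm{v}_{S^c}\|_2$ produces exactly $\frac{2+\rho}{\sqrt{K(s)}}\|\bm{v}\|_{1,\bm{u}} + \tau\|\mat{A}\bm{v}\|_2$, and substituting back into the triangle-inequality split finishes the proof.

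The argument is essentially a bookkeeping assembly of three ingredients, so I expect no deep obstacle; the one point requiring care is the coordination of the set $S$. One must use a single set $S$ that simultaneously realizes the Stechkin tail bound and satisfies the weighted-cardinality constraint $|S|_{\bm{u}} \leq K(s)$ demanded by the robust null space property — choosing $S$ as the support of the best weighted $K(s)$-term approximation achieves both at once. The only other thing to verify is that the constant $2$ in the tail bound, which arises precisely from the slack $K(s) - \tfrac34 K(s) = \tfrac14 K(s)$ afforded by Lemma~\ref{lem:norm_u_inf_bound}, combines with the $\rho$ coming from the null space property to yield the stated numerator $2+\rho$.
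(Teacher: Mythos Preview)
Your proposal is correct and follows essentially the same approach as the paper's proof: split off the truncation error, then bound $\|\bm{x}_\Lambda - \hat{\bm{x}}_\Lambda\|_2$ by choosing a set $S$ with $|S|_{\bm{u}} \leq K(s)$ that realizes the Stechkin infimum, applying Lemma~\ref{lem:stechkin} together with Lemma~\ref{lem:norm_u_inf_bound} for the tail and the lower robust null space property for the head, and summing via the triangle inequality. The constants combine exactly as you describe.
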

\begin{proof}
First, observe that 
$$
\|f-\tilde{f}\|_{L^2_\nu} 
= \|\bm{x}-\hat{\bm{x}}_\Lambda\|_2 
\leq \|\bm{x}_{\Lambda^c}\|_2 + \|\bm{x}_\Lambda - \hat{\bm{x}}_\Lambda\|_2
= \|f-f_\Lambda\|_{L^2_\nu} + \|\bm{x}_\Lambda - \hat{\bm{x}}_\Lambda\|_2.
$$
Now, choose a (not necessarily lower) set $S \subseteq \Lambda$  such that 
$$
\|(\bm{x}_\Lambda - \hat{\bm{x}}_\Lambda)_{S^c}\|_2 
= \inf_{\bm{z}: |\supp(\bm{z})|_{\bm{u}} \leq K(s)} \|\bm{z} - (\bm{x}_\Lambda - \hat{\bm{x}}_\Lambda)\|_2.
$$
(Notice that the infimum is actually a minimum because it is defined over a finite union of linear subspaces). Employing the weighted Stechkin inequality (Lemma~\ref{lem:stechkin}) and the upper bound  $\|\bm{u}\|_\infty^2 \leq \frac34 K(s)$ (Lemma~\ref{lem:norm_u_inf_bound}), we obtain
$$
\|(\bm{x}_\Lambda - \hat{\bm{x}}_\Lambda)_{S^c}\|_2
\leq \frac{\|\bm{x}_\Lambda - \hat{\bm{x}}_\Lambda\|_{1,\bm{u}}}{\sqrt{K(s) - \|\bm{u}_\Lambda\|_\infty^2}}
\leq \frac{2 \|\bm{x}_\Lambda - \hat{\bm{x}}_\Lambda\|_{1,\bm{u}}}{\sqrt{K(s)}}.
$$
Moreover, the lower robust null space property implies 
$$
\|(\bm{x}_\Lambda - \hat{\bm{x}}_\Lambda)_{S}\|_2
\leq 
\frac{\rho}{\sqrt{K(s)}} \|(\bm{x}_\Lambda - \hat{\bm{x}}_\Lambda)_{S^c}\|_{1,\bm{u}}
+ \tau \|\mat{A}(\bm{x}_\Lambda - \hat{\bm{x}}_\Lambda)\|_2.
$$
Combining the above inequalities with the triangle inequality $\|\bm{x}_\Lambda - \hat{\bm{x}}_\Lambda\|_2 \leq \|(\bm{x}_\Lambda - \hat{\bm{x}}_\Lambda)_{S}\|_2 + \|(\bm{x}_\Lambda - \hat{\bm{x}}_\Lambda)_{S^c}\|_2$ concludes the proof.
\end{proof}

\subsubsection{Robustness of WQCBP}
\label{sec:WQCBP}

In this section, we prove that the WQCBP decoder \eqref{eq:WQCBP} is  robust under unknown error. In particular, we will assume to be in the error-blind scenario, where upper bounds of the form $\|\bm{e}\|_2 \leq \eta$ are not available. The results proved here generalize the robust recovery error guarantees given in \cite{Brugiapaglia2017} for the unweighted case. The recovery error estimate depends on the best $s$-term approximation error in lower sets, on the parameter $\eta$, on the unknown error norm $\|\bm{e}\|_2$, and on a tail term $\mathcal{T}$ whose behavior is studied in Theorem~\ref{thm:WQCBPtailbound}. We follow the proof strategy explained in Remark~\ref{rem:proof_startegy}.

\begin{thm}[Robust recovery for WQCBP]
\label{thm:WQCBPerror-blind}
Let $0 < \varepsilon < 1$, $0 < \delta < 1$, $2 \leq s \leq 2^{d+1}$, $\Lambda = \Lambda^{\text{HC}}_{d,s}$ be the hyperbolic cross index set defined in \eqref{eq:defHC}, and $\{\phi_{\bm{i}}\}_{\bm{i} \in \mathbb{N}_0^d}$ be the tensor Legendre or Chebyshev polynomial basis. Draw $\bm{t}_1,\ldots,\bm{t}_m$ independently according to the corresponding measure $\nu$, with 
\begin{equation}
\label{thm:WQCBPerror-blind:eq:unifsamplecomplex}
m \gtrsim s^\gamma L(s,n,\delta,\varepsilon),
\end{equation}
where $\gamma$ is defined as in \eqref{eq:defgamma} and $L(s,n,\delta,\varepsilon)$ is the polylogatrithmic factor defined in \eqref{eq:defpolylog}. Then, the following holds with probability at least $1-\varepsilon$. For any $\eta \geq 0$ and $f \in L^2_\nu(D) \cap L^\infty(D)$ expanded as in \eqref{eq:fexpansion}, the approximation $\tilde{f}$ defined as in \eqref{eq:ftilde} computed using the WQCBP decoder \eqref{eq:WQCBP} satisfies  
\begin{align}
\label{thm:WQCBPerror-blind:eq:Linferrbound}
\|f - \tilde{f}\|_{L^\infty} 
&\leq
C_1 \sigma_{s,L}(\bm{x})_{1,\bm{u}} + s^{\gamma/2} [C_2(\eta + \|\bm{e}\|_2) + C_3 \mathcal{T}],\\
\label{thm:WQCBPerror-blind:eq:L2errbound}
\|f - \tilde{f}\|_{L^2_\nu} 
&\leq
 C_4\frac{\sigma_{s,L}(\bm{x})_{1,\bm{u}}}{s^{\gamma/2}} 
+ C_5 (\eta + \|\bm{e}\|_2) 
+ C_6 \mathcal{T}
+\|f-f_\Lambda\|_{L^2_\nu},
\end{align}
where $f_\Lambda$ is defined as in \eqref{eq:fLambda}, 
\begin{equation}
\label{eq:defT}
\mathcal{T} 
= \mathcal{T}(\mat{A},\Lambda,\bm{e},\bm{u},\eta) 
:= \min\left\{\frac{\|\bm{z}\|_{1,\bm{u}}}{s^{\gamma/2}} : \bm{z} \in \mathbb{C}^n, \; \|\mat{A} \bm{z} - \bm{e}\|_2 \leq \eta\right\}.
\end{equation}
and with constants
$$
C_1 = \frac{3 +\rho}{1-\rho}, \quad 
C_2 = \frac{2\tau}{1-\rho}, \quad 
C_3 = \frac{1+\rho}{1-\rho},
\quad
C_4 = \frac{2(1+\rho)(2+\rho)}{1-\rho}, \quad
C_5 = \frac{\tau(9+3\rho)}{1-\rho}, \quad
C_6 = C_4, \quad
$$
where $\rho = 4\delta / (1-\delta)$ and $\tau = \sqrt{1-\delta}/(1-\delta)$.
\end{thm}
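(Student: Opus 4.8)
The plan is to instantiate the five-step roadmap of Remark~\ref{rem:proof_startegy} for the WQCBP decoder, with the error-blind tail term $\mathcal{T}$ entering through a single feasibility argument. First I would establish the probabilistic ingredient. The sample complexity \eqref{thm:WQCBPerror-blind:eq:unifsamplecomplex}, combined with the bound $K(\alpha s) \lesssim s^\gamma$ from Lemma~\ref{lem:K(s)bounds}, lets me apply Theorem~\ref{thm:sampleRIP} at order $\alpha s$ to conclude that, with probability at least $1-\varepsilon$, the design matrix $\mat{A}$ satisfies the lower restricted isometry property with $\delta_{\alpha s,L} \leq \delta < 1/5$. On this event, Lemma~\ref{lem:LRIP->LRNSP} upgrades the RIP to the lower robust null space property of order $s$ with $\rho = 4\delta/(1-\delta)$ and $\tau = \sqrt{1+\delta}/(1-\delta)$. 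Everything that follows is deterministic on this event, so the recovery bounds will hold with probability at least $1-\varepsilon$, uniformly in $f$.

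The heart of the argument --- and the step I expect to be the main obstacle --- is bounding the optimality-gap term $\|\hat{\bm{x}}_\Lambda\|_{1,\bm{u}} - \|\bm{x}_\Lambda\|_{1,\bm{u}}$ that appears in the distance bound of Theorem~\ref{thm:1-lev_dist_bound}, \emph{without} assuming $\eta \geq \|\bm{e}\|_2$, so that $\bm{x}_\Lambda$ itself need not be WQCBP-feasible. Here I would use the minimizer $\bm{z}^*$ defining $\mathcal{T}$ in \eqref{eq:defT}, for which $\|\mat{A}\bm{z}^* - \bm{e}\|_2 \leq \eta$ and $\|\bm{z}^*\|_{1,\bm{u}} = s^{\gamma/2}\mathcal{T}$. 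Since $\bm{y} = \mat{A}\bm{x}_\Lambda + \bm{e}$ by \eqref{eq:linsys}, the shifted vector $\bm{x}_\Lambda + \bm{z}^*$ obeys $\|\mat{A}(\bm{x}_\Lambda + \bm{z}^*) - \bm{y}\|_2 = \|\mat{A}\bm{z}^* - \bm{e}\|_2 \leq \eta$ and is therefore feasible for \eqref{eq:WQCBP}. Optimality of $\hat{\bm{x}}_\Lambda$ and the triangle inequality then give $\|\hat{\bm{x}}_\Lambda\|_{1,\bm{u}} - \|\bm{x}_\Lambda\|_{1,\bm{u}} \leq \|\bm{z}^*\|_{1,\bm{u}} = s^{\gamma/2}\mathcal{T}$, which is precisely how the tail term enters the error estimate and replaces the usual sign condition of the error-aware theory. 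Feasibility of $\hat{\bm{x}}_\Lambda$ additionally yields the residual bound $\|\mat{A}(\hat{\bm{x}}_\Lambda - \bm{x}_\Lambda)\|_2 \leq \|\mat{A}\hat{\bm{x}}_\Lambda - \bm{y}\|_2 + \|\bm{e}\|_2 \leq \eta + \|\bm{e}\|_2$.

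With these two estimates in hand, I would feed them into Theorem~\ref{thm:1-lev_dist_bound} with $\bm{z} = \bm{x}_\Lambda$ and $\hat{\bm{z}} = \hat{\bm{x}}_\Lambda$, using that the best lower-set approximant is supported in $\Lambda = \Lambda^{\text{HC}}_{d,s}$ (so that $\sigma_{s,L}(\bm{x}_\Lambda)_{1,\bm{u}} \leq \sigma_{s,L}(\bm{x})_{1,\bm{u}}$), to obtain a master $\ell^1_{\bm{u}}$ bound of the form $\|\bm{x}_\Lambda - \hat{\bm{x}}_\Lambda\|_{1,\bm{u}} \lesssim \sigma_{s,L}(\bm{x})_{1,\bm{u}} + s^{\gamma/2}\mathcal{T} + \sqrt{K(s)}(\eta + \|\bm{e}\|_2)$ with explicit $\rho,\tau$-dependent constants.

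To reach \eqref{thm:WQCBPerror-blind:eq:Linferrbound} I would then use that the intrinsic weights satisfy $u_{\bm{i}} = \|\phi_{\bm{i}}\|_{L^\infty}$, so that $\|f - \tilde{f}\|_{L^\infty} \leq \|\bm{x} - \hat{\bm{x}}_\Lambda\|_{1,\bm{u}} \leq \sigma_{s,L}(\bm{x})_{1,\bm{u}} + \|\bm{x}_\Lambda - \hat{\bm{x}}_\Lambda\|_{1,\bm{u}}$, after bounding $\|\bm{x}_{\Lambda^c}\|_{1,\bm{u}} \leq \sigma_{s,L}(\bm{x})_{1,\bm{u}}$; to reach \eqref{thm:WQCBPerror-blind:eq:L2errbound} I would instead invoke the conversion Lemma~\ref{lem:l1tol2} together with the residual bound above. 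Substituting the master bound into each and using $K(s) \asymp s^\gamma$ from Lemma~\ref{lem:K(s)bounds} to trade $\sqrt{K(s)}$ for $s^{\gamma/2}$ yields the two stated inequalities; the only remaining work is the routine bookkeeping that collects the constants $C_1,\ldots,C_6$.
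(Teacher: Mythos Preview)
Your proposal is correct and follows essentially the same route as the paper's proof: establish the lower robust null space property via Theorem~\ref{thm:sampleRIP} and Lemma~\ref{lem:LRIP->LRNSP}, bound the optimality gap $\|\hat{\bm{x}}_\Lambda\|_{1,\bm{u}} - \|\bm{x}_\Lambda\|_{1,\bm{u}}$ by $s^{\gamma/2}\mathcal{T}$ using the feasible shift $\bm{x}_\Lambda + \bm{z}^*$ (the paper does the equivalent change of variables $\bm{z}\mapsto\bm{z}+\bm{x}_\Lambda$ inside the minimum), bound the residual by $\eta+\|\bm{e}\|_2$, and then feed both into Theorem~\ref{thm:1-lev_dist_bound} and Lemma~\ref{lem:l1tol2}. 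The only cosmetic differences are that you make the order-$\alpha s$ application of the RIP and the constraint $\delta<1/5$ explicit, and you write $\sigma_{s,L}(\bm{x}_\Lambda)_{1,\bm{u}}\leq\sigma_{s,L}(\bm{x})_{1,\bm{u}}$ where the paper notes equality.
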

\begin{proof}
Thanks to Theorem~\ref{thm:sampleRIP} and Lemma~\ref{lem:LRIP->LRNSP}, the sample complexity bound \eqref{thm:WQCBPerror-blind:eq:unifsamplecomplex} guarantees that $\mat{A}$ has the lower robust null space property with probability at least $1-\varepsilon$. As a consequence, Theorem~\ref{thm:1-lev_dist_bound} holds with probability at least $1-\varepsilon$. 

Now, we observe that, on the one hand, 
\begin{equation}
\label{thm:WQCBPerror-blind:eq:datafidelity}
\|\mat{A}(\bm{x}_\Lambda-\hat{\bm{x}}_\Lambda)\|_2
\leq  \|\mat{A}\bm{x}_\Lambda-\bm{y}\|_2 + \|\mat{A}\hat{\bm{x}}_\Lambda - \bm{y}\|_2
\leq \|\bm{e}\|_2 + \eta
\end{equation}
and, on the other hand, 
\begin{align*}
\|\hat{\bm{x}}_{\Lambda}\|_{1,\bm{u}} - \|\bm{x}_{\Lambda}\|_{1,\bm{u}}
& = \min \{\|\bm{z}\|_{1,\bm{u}} : \bm{z} \in \mathbb{C}^n, \|\mat{A}\bm{z}-\bm{y}\|_2 \leq \eta\} - \|\bm{x}_{\Lambda}\|_{1,\bm{u}}\\
& \leq \min \{\|\bm{z} - \bm{x}_\Lambda\|_{1,\bm{u}} : \bm{z} \in \mathbb{C}^n,\|\mat{A}\bm{z}-\bm{y}\|_2 \leq \eta\}\\
& = \min \{\|\bm{z}\|_{1,\bm{u}} : \bm{z} \in \mathbb{C}^n,\|\mat{A}\bm{z}-\bm{e}\|_2 \leq \eta\}\\
& = s^{\gamma / 2} \mathcal{T}(\mat{A},\Lambda,\bm{e},\bm{u},\eta).
\end{align*}
Noting that $\sigma_{s,L}(\bm{x}_\Lambda)_{1,\bm{u}} = \sigma_{s,L}(\bm{x})_{1,\bm{u}}$ (since $\Lambda$ is the union of all  lower sets of cardinality $s$), that $K(s) \leq s^{\gamma}$ (due to Lemma~\ref{lem:K(s)bounds}), and employing Theorem~\ref{thm:1-lev_dist_bound} with $\bm{z} = \bm{x}_\Lambda$ and $\hat{\bm{z}} = \hat{\bm{x}}_\Lambda$, we obtain
\begin{equation}
\label{thm:WQCBPerror-blind:eq:||x_Lambda-x_Lambdahat||Linf}
\|\bm{x}_\Lambda - \hat{\bm{x}}_\Lambda\|_{1,\bm{u}}
 \leq 
\frac{2(1 + \rho)}{1-\rho} \sigma_{s,L}(\bm{x})_{1,\bm{u}} 
+ s^{\gamma/2} \left[\frac{2\tau}{1-\rho} (\|\bm{e}\|_2 + \eta) 
+ \frac{1+\rho}{1-\rho} \mathcal{T} \right].
\end{equation}
Using $\|\bm{x}-\bm{x}_\Lambda\|_{1,\bm{u}} \leq \sigma_{s,L}(\bm{x})_{1,\bm{u}}$ and plugging \eqref{thm:WQCBPerror-blind:eq:||x_Lambda-x_Lambdahat||Linf} into
$$
\|f-\tilde{f}\|_{L^{\infty}}
\leq \|\bm{x}-\hat{\bm{x}}_\Lambda\|_{1,\bm{u}}
\leq \|\bm{x} - \bm{x}_\Lambda\|_{1,\bm{u}}  + \|\bm{x}_\Lambda - \hat{\bm{x}}_\Lambda\|_{1,\bm{u}},
$$
gives  \eqref{thm:WQCBPerror-blind:eq:Linferrbound}.

Finally, we employ the $\ell_\bm{u}^1\to L^2_\nu$ estimate conversion result (Lemma~\ref{lem:l1tol2}) combined with \eqref{thm:WQCBPerror-blind:eq:datafidelity}, \eqref{thm:WQCBPerror-blind:eq:||x_Lambda-x_Lambdahat||Linf}, and with the fact that $K(s) \geq \frac14 s^{\gamma}$ (Lemma~\ref{lem:K(s)bounds}), to obtain \eqref{thm:WQCBPerror-blind:eq:L2errbound}.
\end{proof}

In the following result, we give  an upper bound to the tail term $\mathcal{T}$ defined in \eqref{eq:defT} in terms of the $m^{th}$ singular value (in decreasing order) of the matrix $\sqrt{\frac{m}{n}}\mat{A}^*$, denoted as $\sigma_{m}(\sqrt{\frac{m}{n}}\mat{A}^*)$.
\begin{thm}[Tail term bound]
\label{thm:WQCBPtailbound}
Consider the setup of Theorem~\ref{thm:WQCBPerror-blind}
with
\begin{equation}
\label{thm:WQCBPtailbound:eq:samplecomplexity}
m \asymp s^{\gamma} L(s,n,\delta,\varepsilon),
\end{equation} 
and let $\mathcal{T}=\mathcal{T}(\mat{A},\Lambda,\bm{e},\bm{u},\eta)$ be as in \eqref{eq:defT}. Then, 
\begin{equation}
\label{eq:Tbound}
\mathcal{T}
\lesssim
\frac{s^{\alpha/2} \sqrt{L}}{\sigma_m(\sqrt{\frac{m}{n}}\mat{A}^*)}
\max\{\|\bm{e}\|_2 - \eta, 0\},
\end{equation}
where $L$ is as in \eqref{eq:defpolylog} and $\alpha = 1, 2$ in the Chebyshev or Legendre case, respectively.
\end{thm}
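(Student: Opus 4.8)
The plan is to exhibit an explicit feasible point for the optimization defining $\mathcal{T}$ and to control its weighted $\ell^1$ norm. First I would dispose of the trivial cases: if $\|\bm{e}\|_2 \le \eta$, then $\bm{z} = \bm{0}$ is feasible, so $\mathcal{T} = 0$ and \eqref{eq:Tbound} holds since its right-hand side is nonnegative; and if $\sigma_m(\sqrt{\tfrac{m}{n}}\mat{A}^*) = 0$ the right-hand side is $+\infty$ and there is nothing to prove. So I would assume $\|\bm{e}\|_2 > \eta$ and $\sigma_m(\sqrt{\tfrac{m}{n}}\mat{A}^*) > 0$; the latter forces $\mat{A}$ to have full row rank $m$, hence every vector of $\mathbb{C}^m$ lies in the range of $\mat{A}$.

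The construction uses the Moore--Penrose pseudoinverse. Set $\bm{z}^\dagger := \mat{A}^*(\mat{A}\mat{A}^*)^{-1} \bm{e}$, the minimal-$\ell^2$-norm solution of $\mat{A}\bm{z}^\dagger = \bm{e}$, which satisfies $\|\bm{z}^\dagger\|_2 \le \|\bm{e}\|_2 / \sigma_m(\mat{A})$. To use the budget $\eta$ rather than solving exactly, I would pass to the rescaling $\bm{z} := c\,\bm{z}^\dagger$ with $c = (\|\bm{e}\|_2 - \eta)/\|\bm{e}\|_2 \in (0,1]$. Then $\mat{A}\bm{z} = c\,\bm{e}$, so $\|\mat{A}\bm{z} - \bm{e}\|_2 = (1-c)\|\bm{e}\|_2 = \eta$, making $\bm{z}$ feasible, while $\|\bm{z}\|_2 = c\,\|\bm{z}^\dagger\|_2 \le (\|\bm{e}\|_2 - \eta)/\sigma_m(\mat{A})$. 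This rescaling is precisely what converts the factor $\|\bm{e}\|_2$ produced by the exact solve into the desired $\max\{\|\bm{e}\|_2 - \eta, 0\}$.

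It then remains to move from the $\ell^2$ bound to the weighted $\ell^1$ bound and to insert the sample complexity. By Cauchy--Schwarz, $\|\bm{z}\|_{1,\bm{u}} \le \sqrt{|\Lambda|_{\bm{u}}}\,\|\bm{z}\|_2$, where $|\Lambda|_{\bm{u}} = \sum_{\bm{i} \in \Lambda} u_{\bm{i}}^2$. The combinatorial core of the argument is the pointwise weight estimate $u_{\bm{i}}^2 \le s^\alpha$ for every $\bm{i} \in \Lambda^{\mathrm{HC}}_{d,s}$: in the Chebyshev case $u_{\bm{i}}^2 = 2^{\|\bm{i}\|_0} \le \prod_{\ell}(i_\ell + 1) \le s$ (so $\alpha = 1$), while in the Legendre case $u_{\bm{i}}^2 = \prod_\ell (2 i_\ell + 1) \le \prod_\ell (i_\ell + 1)^2 \le s^2$ (so $\alpha = 2$), both using the hyperbolic-cross constraint $\prod_\ell (i_\ell + 1) \le s$. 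Summing over $\Lambda$ gives $|\Lambda|_{\bm{u}} \le n\, s^\alpha$. Finally, since the singular values of $\mat{A}$ and $\mat{A}^*$ coincide and scale linearly, $1/\sigma_m(\mat{A}) = \sqrt{m/n}\, /\, \sigma_m(\sqrt{\tfrac{m}{n}}\mat{A}^*)$, whence
\[
\|\bm{z}\|_{1,\bm{u}} \le \sqrt{|\Lambda|_{\bm{u}}}\,\|\bm{z}\|_2 \le \sqrt{n\, s^\alpha}\cdot \frac{\sqrt{m/n}}{\sigma_m(\sqrt{\tfrac{m}{n}}\mat{A}^*)}(\|\bm{e}\|_2 - \eta) = \frac{\sqrt{m\, s^\alpha}}{\sigma_m(\sqrt{\tfrac{m}{n}}\mat{A}^*)}(\|\bm{e}\|_2 - \eta).
\]

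Dividing by $s^{\gamma/2}$ and substituting $m \asymp s^\gamma L$ (so that $\sqrt{m\, s^\alpha} \asymp s^{(\gamma+\alpha)/2}\sqrt{L}$) yields $\mathcal{T} \le \|\bm{z}\|_{1,\bm{u}}/s^{\gamma/2} \lesssim s^{\alpha/2}\sqrt{L}\,(\|\bm{e}\|_2 - \eta)/\sigma_m(\sqrt{\tfrac{m}{n}}\mat{A}^*)$, which combined with the trivial cases is exactly \eqref{eq:Tbound}. I expect the only genuinely delicate part to be the bookkeeping around $\alpha$: everything hinges on the sharp pointwise weight bound on the hyperbolic cross and on correctly tracking the $\sqrt{m/n}$ normalization relating $\sigma_m(\mat{A})$ to $\sigma_m(\sqrt{\tfrac{m}{n}}\mat{A}^*)$; the remaining manipulations are routine.
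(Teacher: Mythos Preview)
Your proposal is correct and follows essentially the same approach as the paper: construct the feasible point $\bm{z} = (1-\eta/\|\bm{e}\|_2)\,\mat{A}^\dagger \bm{e}$, bound its $\ell^2$ norm via the smallest singular value, pass to the weighted $\ell^1$ norm by Cauchy--Schwarz, and estimate $|\Lambda|_{\bm{u}} \le n s^\alpha$ using the hyperbolic-cross constraint. The only cosmetic difference is that the paper bounds $\prod_\ell(2i_\ell+1) \le 2^{\|\bm{i}\|_0}\prod_\ell(i_\ell+1)$ in the Legendre case, whereas you use $\prod_\ell(2i_\ell+1) \le \prod_\ell(i_\ell+1)^2$; both yield the same $s^2$ bound.
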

\begin{proof}
Notice that if the rank of $\mat{A}$ is not full, then $\sigma_m(\sqrt{\frac{m}{n}}\mat{A}^*)=0$ and \eqref{eq:Tbound} is trivially satisfied. Hence, we assume that $\mat{A}$ has full rank. Also if $\eta \geq \|\bm{e}\|_2$  the result holds trivially.  Then, we also suppose that $\eta < \|\bm{e}\|_2$.  

Since $\|\bm{e}\|_2 \neq 0$, we can define the ansatz $\bm{z} := (1 - \eta / \|\bm{e}\|_2 ) \mat{A}^{\dag} \bm{e}$, where $\mat{A}^{\dag} = \mat{A}^*(\mat{A}\mat{A}^*)^{-1}$ is the pseudoinverse.  Then $\bm{z}$ satisfies $\|\mat{A} \bm{z} - \bm{e}\|_{2} = \eta$, and hence, recalling the definition \eqref{eq:defT} of $\mathcal{T}$, we have
$$
s^{\gamma/2} \mathcal{T} 
\leq \|\bm{z}\|_{1,\bm{u}} 
\leq \sqrt{| \Lambda |_{\bm{u}}} \|\bm{z}\|_{2} 
\leq \frac{\sqrt{| \Lambda |_{\bm{u}}}}{\sigma_m(\mat{A}^*)} 
     \left ( \|\bm{e}\|_2- \eta  \right ).
$$
Equation \eqref{thm:WQCBPtailbound:eq:samplecomplexity} implies that $\sqrt{\frac{m}{s^{\gamma}}} \lesssim \sqrt{L}$, and therefore
\begin{equation}
\label{Tbound1}
\mathcal{T} 
\lesssim \sqrt{\frac{| \Lambda |_{\bm{u}}}{n} } \frac{\sqrt{L}}{\sigma_m  \left ( \sqrt{\frac{m}{n}} \mat{A}^* \right )} \left (\|\bm{e}\|_2- \eta  \right ).
\end{equation}
The last step is to estimate $| \Lambda |_{\bm{u}}$ using the explicit formulae \eqref{eq:intrinsicChebLeg} for the intrinsic weights.  For the Chebyshev case, we have 
$$
| \Lambda |_{\bm{u}} 
= \sum_{\bm{i} \in \Lambda} 2^{\|\bm{i}\|_0} 
\leq \sum_{\bm{i} \in \Lambda} \prod^{d}_{\ell=1} \left ( i_\ell + 1 \right ) 
\leq s \sum_{\bm{i} \in \Lambda} 1  
= s n
$$
where in the penultimate step we used the definition of the hyperbolic cross \eqref{eq:defHC}.  For the Legendre case, we have 
$$
| \Lambda |_{\bm{u}} 
= \sum_{\bm{i} \in \Lambda} \prod^{d}_{\ell=1} \left ( 2 i_\ell + 1 \right ) 
\leq \sum_{\bm{i} \in \Lambda} 2^{\|\bm{i}\|_0} \prod^{d}_{\ell=1} \left ( i_\ell + 1 \right ) \leq s^2 n.
$$
This completes the proof. 
\end{proof}

Theorems~\ref{thm:WQCBPerror-blind} \& \ref{thm:WQCBPtailbound} show that the robustness of WQCBP is implied by inequalities of the form $\sigma_m(\sqrt{\frac{m}{n}}\mat{A}^*) \gtrsim 1$. When $d = 1$, this can be achieved by resorting to the spectral theory of random matrices with heavy-tailed rows \cite{Brugiapaglia2017}. Showing this type of inequality when $d > 1$ is still an open problem. However, it is not difficult to show that (see \cite[Lemma 3]{Adcock2017})
$$
\lambda_{\min}\left(\Expe\left[\frac{m}{n}\mat{A}\mat{A}^*\right]\right) = 1-\frac{1}{n}.
$$
By inspecting the proof of Theorem~\ref{thm:WQCBPtailbound}, we notice that
$$
\mathcal{T} 
\lesssim 
\mathcal{Q} \sqrt{L} \max\{\|\bm{e}\|_2 -\eta,0\},
$$
where 
\begin{equation}
\label{eq:defQuA}
\mathcal{Q} = \mathcal{Q}(\mat{A},\Lambda,\bm{u}) := \sqrt{\frac{|\Lambda|_{\bm{u}}}{n}} \frac{1}{\sigma_m(\sqrt{\frac{m}{n}}\mat{A}^*)}.
\end{equation}
This constant can be easily estimated numerically and turns out to have moderate size (see \cite{Adcock2017}). 

To conclude, we notice that Theorem~\ref{thm:WQCBPerror-blind} implies an analogous result in the error-aware scenario, i.e.\ when $\|\bm{e}\|_2 \leq \eta$.

\begin{thm}[Robust recovery of WQCBP in the error-aware setting]
\label{thm:WQCBPerror-aware}
Consider the same setup of Theorem~\ref{thm:WQCBPerror-blind} and let $\|\bm{e}\|_2 \leq \eta$. Then, with probability at least $1-\varepsilon$ the following inequalities hold:
\begin{align*}
\|f - \tilde{f}\|_{L^\infty} 
&\lesssim
\sigma_{s,L}(\bm{x})_{1,\bm{u}} + s^{\gamma/2} \eta,\\
\|f - \tilde{f}\|_{L^2_\nu} 
&\lesssim
\frac{\sigma_{s,L}(\bm{x})_{1,\bm{u}}}{s^{\gamma/2}} +  \eta + \|f-f_\Lambda\|_{L^2_\nu}.
\end{align*}

\end{thm}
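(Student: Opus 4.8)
The plan is to derive this directly from Theorem~\ref{thm:WQCBPerror-blind}, whose hypotheses are identical, by specializing the general bounds \eqref{thm:WQCBPerror-blind:eq:Linferrbound}--\eqref{thm:WQCBPerror-blind:eq:L2errbound} to the regime $\|\bm{e}\|_2 \leq \eta$. First I would invoke Theorem~\ref{thm:WQCBPerror-blind}: under the sample complexity \eqref{thm:WQCBPerror-blind:eq:unifsamplecomplex}, the same event of probability at least $1-\varepsilon$ (on which $\mat{A}$ satisfies the lower robust null space property) guarantees both estimates simultaneously, so no additional probabilistic analysis is required here.

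The crucial observation is that the tail term $\mathcal{T}$ defined in \eqref{eq:defT} vanishes whenever $\eta \geq \|\bm{e}\|_2$. Indeed, the vector $\bm{z} = \bm{0}$ is feasible for the minimization problem \eqref{eq:defT}, since $\|\mat{A}\bm{0} - \bm{e}\|_2 = \|\bm{e}\|_2 \leq \eta$, and it attains objective value $\|\bm{0}\|_{1,\bm{u}}/s^{\gamma/2} = 0$; hence $\mathcal{T} = 0$. (Equivalently, this follows from \eqref{eq:Tbound} in Theorem~\ref{thm:WQCBPtailbound}, where the factor $\max\{\|\bm{e}\|_2 - \eta, 0\}$ is zero in this regime.) With $\mathcal{T} = 0$, the terms $C_3 \mathcal{T}$ and $C_6 \mathcal{T}$ drop out of \eqref{thm:WQCBPerror-blind:eq:Linferrbound} and \eqref{thm:WQCBPerror-blind:eq:L2errbound}, respectively.

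It then remains to collapse the noise contribution. Since $\|\bm{e}\|_2 \leq \eta$, we have $\eta + \|\bm{e}\|_2 \leq 2\eta$, so the terms $C_2(\eta + \|\bm{e}\|_2)$ and $C_5(\eta + \|\bm{e}\|_2)$ are bounded by $2C_2\eta$ and $2C_5\eta$. Absorbing the explicit constants $C_1,\ldots,C_6$ (which depend only on $\rho$ and $\tau$, and hence only on $\delta$) into the implied constant of $\lesssim$ yields exactly the two displayed inequalities. There is essentially no obstacle in this argument; the only point requiring a moment's care is verifying that $\bm{0}$ lies in the feasible set of \eqref{eq:defT}, which forces $\mathcal{T}$ to vanish outright rather than merely be controlled by the singular-value quantity $\mathcal{Q}$ appearing in \eqref{eq:defQuA}.
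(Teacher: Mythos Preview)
Your proposal is correct and matches the paper's approach: the paper simply states that this result follows from Theorem~\ref{thm:WQCBPerror-blind} in the error-aware scenario $\|\bm{e}\|_2 \leq \eta$, without spelling out the details. Your argument---observing that $\bm{z}=\bm{0}$ is feasible in \eqref{eq:defT} so that $\mathcal{T}=0$, and then bounding $\eta+\|\bm{e}\|_2\leq 2\eta$---is exactly the intended specialization.
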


\subsubsection{Robustness of WLASSO}
\label{sec:WLASSO}

In this section, we prove that the WLASSO decoder is robust under unknown error, when the tuning parameter is chosen proportionally to the ratio $\sqrt{K(s)} / \|\bm{e}\|_2$. 
\begin{thm}[WLASSO robust recovery]
\label{thm:WLASSOrecovery}
Let $0 < \varepsilon < 1$, $0 < \delta < 1$, $2 \leq s \leq 2^{d+1}$, $\Lambda = \Lambda^{\text{HC}}_{d,s}$ be the hyperbolic cross index set defined in \eqref{eq:defHC}, and $\{\phi_{\bm{i}}\}_{\bm{i} \in \mathbb{N}_0^d}$ be the tensor Legendre or Chebyshev polynomial basis. Draw $\bm{t}_1,\ldots,\bm{t}_m$ independently according to the corresponding measure $\nu$, with 
$$
m \gtrsim s^\gamma L(s,n,\delta,\varepsilon),
$$
where $\gamma$ is defined as in \eqref{eq:defgamma} and $L(s,n,\delta,\varepsilon)$ is the polylogatrithmic factor defined in \eqref{eq:defpolylog}. Then, the following holds with probability at least $1-\varepsilon$. For any $f \in L^2_\nu(D) \cap L^\infty(D)$ expanded as in \eqref{eq:fexpansion}, the approximation $\tilde{f}$ defined as in \eqref{eq:ftilde} computed using the WLASSO decoder \eqref{eq:WLASSO} with tuning parameter  
\begin{equation}
\label{thm:WLASSOrecovery:eq:lambda}
\lambda = \theta \frac{\sqrt{K(s)}}{\|\bm{e}\|_2}, \quad \text{with } \theta > 0,
\end{equation}
satisfies
\begin{align}
\label{thm:WLASSOrecovery:eq:Linfest}
\|f-\tilde{f}\|_{L^\infty}
& \leq C_1 \sigma_{s,L}(\bm{x})_{1,\bm{u}} + C_2 s^{\gamma/2} \|\bm{e}\|_2,\\
\label{thm:WLASSOrecovery:eq:L2est}
\|f-\tilde{f}\|_{L^2_\nu}
& \leq C_3 \frac{\sigma_{s,L}(\bm{x})_{1,\bm{u}}}{s^{\gamma/2}} + C_4 \|\bm{e}\|_2 + \|f-f_\Lambda\|_{L^2_\nu},
\end{align}
where $f_\Lambda$ is defined as in \eqref{eq:fLambda}, and the constants are
$$
C_1 = \frac{3+\rho}{1-\rho}, \quad 
C_2 = \frac{1}{1-\rho}\left[\frac{\tau^2}{(1+\rho)\theta} + (1+\rho)\theta + 2\tau\right],
$$
and
$$
C_3  = \frac{4(1+\rho)(2+\rho)}{1-\rho}\quad \quad
C_4  = \frac{1}{1-\rho}\left[\frac{(5+\rho)^2 \tau^2}{4(1+\rho)(2+\rho)\theta} + (1+\rho)(2+\rho)\theta +  (5+\rho)\tau
\right],
$$
where $\rho = 4\delta / (1-\delta)$, $\tau = \sqrt{1-\delta}/(1-\delta)$.
\end{thm}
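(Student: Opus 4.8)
The plan is to follow the roadmap of Remark~\ref{rem:proof_startegy} exactly as in the WQCBP case, replacing the feasibility/optimality facts of the constrained program by the first-order consequences of minimizing the WLASSO objective. First I would invoke Theorem~\ref{thm:sampleRIP} and Lemma~\ref{lem:LRIP->LRNSP}: the sample-complexity hypothesis guarantees, with probability at least $1-\varepsilon$, that $\mat{A}$ enjoys the lower robust null space property of order $s$, so that both the $\ell^1_{\bm{u}}$ distance bound (Theorem~\ref{thm:1-lev_dist_bound}) and the $\ell^1_{\bm{u}}\to L^2_\nu$ conversion (Lemma~\ref{lem:l1tol2}) are available. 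All remaining steps are deterministic and take place on this event.

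The essential new input comes from the minimality of $\hat{\bm{x}}_\Lambda$. Writing $r := \|\mat{A}\hat{\bm{x}}_\Lambda - \bm{y}\|_2$ and recalling $\|\mat{A}\bm{x}_\Lambda - \bm{y}\|_2 = \|\bm{e}\|_2$, comparing the objective at $\hat{\bm{x}}_\Lambda$ and at the feasible point $\bm{x}_\Lambda$ yields
$$
\|\hat{\bm{x}}_\Lambda\|_{1,\bm{u}} - \|\bm{x}_\Lambda\|_{1,\bm{u}} \leq \lambda\bigl(\|\bm{e}\|_2^2 - r^2\bigr), \qquad \|\mat{A}(\bm{x}_\Lambda - \hat{\bm{x}}_\Lambda)\|_2 \leq \|\bm{e}\|_2 + r,
$$
the second following from the triangle inequality. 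These play the role that \eqref{thm:WQCBPerror-blind:eq:datafidelity} and the tail term $\mathcal{T}$ played for WQCBP, except that the unknown residual $r$ is now a free nonnegative parameter that must be eliminated by a worst-case argument.

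For \eqref{thm:WLASSOrecovery:eq:Linfest} I would substitute the first inequality into Theorem~\ref{thm:1-lev_dist_bound} (with $\bm{z}=\bm{x}_\Lambda$, $\hat{\bm{z}}=\hat{\bm{x}}_\Lambda$) and the second into its residual term. The key algebraic observation is that the $r$-dependent part collapses into the product form $(\|\bm{e}\|_2 + r)\bigl(a\lambda(\|\bm{e}\|_2 - r)+b\bigr)$, with $a=(1+\rho)/(1-\rho)$ and $b=2\tau\sqrt{K(s)}/(1-\rho)$, since $(\|\bm{e}\|_2+r)(\|\bm{e}\|_2-r)=\|\bm{e}\|_2^2-r^2$. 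Maximizing this scalar expression over $r\ge0$ gives the closed form $a\lambda\|\bm{e}\|_2^2 + b\|\bm{e}\|_2 + b^2/(4a\lambda)$, and inserting the prescribed $\lambda=\theta\sqrt{K(s)}/\|\bm{e}\|_2$ makes every term proportional to $\sqrt{K(s)}\,\|\bm{e}\|_2$. The bound $\sqrt{K(s)}\le s^{\gamma/2}$ of Lemma~\ref{lem:K(s)bounds} then converts this into $C_2\,s^{\gamma/2}\|\bm{e}\|_2$; combining with $\|\bm{x}-\bm{x}_\Lambda\|_{1,\bm{u}}\le\sigma_{s,L}(\bm{x})_{1,\bm{u}}$ and $\|f-\tilde{f}\|_{L^\infty}\le\|\bm{x}-\hat{\bm{x}}_\Lambda\|_{1,\bm{u}}$ delivers $C_1=(3+\rho)/(1-\rho)$.

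The estimate \eqref{thm:WLASSOrecovery:eq:L2est} is obtained the same way, feeding the distance bound into Lemma~\ref{lem:l1tol2}. The two surviving contributions, $\frac{2+\rho}{\sqrt{K(s)}}\|\bm{x}_\Lambda-\hat{\bm{x}}_\Lambda\|_{1,\bm{u}}$ and $\tau\|\mat{A}(\bm{x}_\Lambda-\hat{\bm{x}}_\Lambda)\|_2$, recombine into the same product form $(\|\bm{e}\|_2+r)\bigl(A'(\|\bm{e}\|_2-r)+B'\bigr)$, now with $A'=(2+\rho)a\lambda/\sqrt{K(s)}$ and $B'=(5+\rho)\tau/(1-\rho)$, the latter arising after merging the bare $\tau$ with $\tfrac{2+\rho}{\sqrt{K(s)}}\cdot b$. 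The identical one-variable maximization over $r$, followed by the substitution of $\lambda$, produces $C_4$, while the best-approximation term is handled with the lower bound $K(s)\ge s^{\gamma}/4$ of Lemma~\ref{lem:K(s)bounds} to give $C_3=4(1+\rho)(2+\rho)/(1-\rho)$. I expect the only genuinely nontrivial point to be exactly this elimination of the unknown residual: unlike WQCBP, where feasibility pins the residual at $\eta$, here $r$ is whatever the minimizer returns, and the scaling $\lambda\asymp\sqrt{K(s)}/\|\bm{e}\|_2$ is precisely the choice that balances the $a\lambda\|\bm{e}\|_2^2$ and $b^2/(4a\lambda)$ pieces so that no $r$-dependence survives.
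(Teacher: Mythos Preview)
Your proposal is correct and follows essentially the same route as the paper. The only cosmetic difference is in how the unknown residual $r=\|\mat{A}\hat{\bm{x}}_\Lambda-\bm{y}\|_2$ is eliminated: the paper writes the problem in augmented form $(\hat{\bm{x}}_\Lambda,\hat{\bm{e}})$ and applies Young's inequality $ab\le\omega a^2+b^2/(4\omega)$ with $\omega=(1+\rho)\lambda$ to $2\tau\sqrt{K(s)}\,\|\hat{\bm{e}}\|_2$, then invokes optimality of the full objective, whereas you invoke optimality first and then maximize the resulting quadratic over $r\ge0$; the two procedures are equivalent (your maximizer $r^*=b/(2a\lambda)$ is exactly the equality case of the paper's Young inequality) and yield identical constants $C_1,\dots,C_4$.
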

\begin{proof}
First, notice that \eqref{eq:WLASSO} can be reformulated in augmented form as
$$
(\hat{\bm{x}}_\Lambda, \hat{\bm{e}})
:= \arg \min_{(\bm{z},\bm{d})\in\mathbb{C}^{n}\times \mathbb{C}^m}
\|\bm{z}\|_{1,\bm{u}} + \lambda \|\bm{d}\|_2^2  \text{ s.t. } \mat{A}\bm{z} + \bm{d} = \bm{y}.
$$
Moreover, Theorem~\ref{thm:1-lev_dist_bound} (that holds with probability at least $1-\varepsilon$) implies
\begin{equation}
\label{thm:WLASSOrecovery:eq:l1uest}
\|\bm{x}_\Lambda - \hat{\bm{x}}_\Lambda\|_{1,\bm{u}}
\leq \frac{2(1+\rho)}{1-\rho} \sigma_{s,L}(\bm{x})_{1,\bm{u}}
 + \frac{\xi}{1-\rho},
\end{equation}
where
\begin{equation}
\label{thm:WLASSOrecovery:eq:defxi}
\xi := (1+\rho) (\|\hat{\bm{x}}_\Lambda\|_{1,\bm{u}} - \|\bm{x}_\Lambda\|_{1,\bm{u}}) + 2\tau \sqrt{K(s)} \|\mat{A}(\hat{\bm{x}}_\Lambda - \bm{x}_\Lambda)\|_2.
\end{equation}
Since  $\|\mat{A}(\hat{\bm{x}}_\Lambda - \bm{x}_\Lambda)\|_2 \leq \|\hat{\bm{e}}\|_2 + \|\bm{e}\|_2$, we  estimate
\begin{align*}
\xi
& \leq (1+\rho) \|\hat{\bm{x}}_\Lambda\|_{1,\bm{u}} 
	+ 2\tau\sqrt{K(s)}\|\hat{\bm{e}}\|_2 
	- (1+\rho)\|\bm{x}_{\Lambda}\|_{1,\bm{u}} 
	+ 2\tau\sqrt{K(s)}\|\bm{e}\|_2.
\end{align*}
Now, we employ Young's inequality $ab \leq \omega a^2 + \frac{b^2}{4\omega}$, with $\omega = (1+\rho)\lambda$, $a = \|\hat{\bm{e}}\|_2$, and $b = 2\tau\sqrt{K(s)}$. This, combined with the optimality of $(\hat{\bm{x}}_\Lambda, \hat{\bm{e}})$ yields
\begin{align*}
\xi 
& \leq \frac{\tau^2 K(s)}{(1+\rho)\lambda} 
	+ (1+\rho) \lambda \|\bm{e}\|_2^2
	+2 \tau \sqrt{K(s)}\|\bm{e}\|_2. 
\end{align*}
Now, plugging \eqref{thm:WLASSOrecovery:eq:lambda} into the above relation, we see that
$$
\xi 
\leq \left[\frac{\tau^2}{(1+\rho)\theta} + (1+\rho)\theta + 2\tau\right]
\sqrt{K(s)} \|\bm{e}\|_2.
$$
Finally, recalling Lemma~\ref{lem:K(s)bounds}, from \eqref{thm:WLASSOrecovery:eq:l1uest} we obtain \eqref{thm:WLASSOrecovery:eq:Linfest} as in Theorem~\ref{thm:WQCBPerror-blind}.

Now, Lemma~\ref{lem:l1tol2} combined with \eqref{thm:WLASSOrecovery:eq:l1uest} yields
\begin{equation}
\label{thm:WLASSOrecovery:eq:L2est_partial}
\|f-\tilde{f}\|_{L^2_\nu}
\leq \|f - f_\Lambda\|_{L^2_\nu}
	+\frac{2(1+\rho)(2+\rho)}{1-\rho} \frac{\sigma_{s,L}(\bm{x})_{1,\bm{u}}}{\sqrt{K(s)}}  	+ \frac{\zeta}{(1-\rho)\sqrt{K(s)}} 
\end{equation}
where 
\begin{equation}
\label{thm:WLASSOrecovery:eq:defzeta}
\zeta := (1+\rho)(2+\rho)(\|\hat{\bm{x}}\|_{1,\bm{u}} - \|\bm{x}\|_{1,\bm{u}}) 
	+ (5+\rho)\tau\sqrt{K(s)}\|\mat{A}(\hat{\bm{x}}_\Lambda-\bm{x}_\Lambda)\|_2
\end{equation}
Employing Young's inequality with $\omega = (1+\rho)(2+\rho)\lambda$, $a= \|\hat{\bm{e}}\|_2$, and $(5+\rho)\tau\sqrt{K(s)}$ and using analogous manipulations as before, we have
$$
\zeta \leq 
\left[\frac{(5+\rho)^2 \tau^2}{4(1+\rho)(2+\rho)\theta} + (1+\rho)(2+\rho)\theta +  (5+\rho)\tau
\right]
\sqrt{K(s)} \|\bm{e}\|_2.
$$
Combining the above inequality with \eqref{thm:WLASSOrecovery:eq:L2est_partial} and recalling Lemma~\ref{lem:K(s)bounds} gives \eqref{thm:WLASSOrecovery:eq:L2est}, as required.
\end{proof}

\subsubsection{Robustness of WSR-LASSO}
\label{sec:WSRLASSO}

We prove a robustness under unknown error for the WSR-LASSO decoder \eqref{eq:WSRLASSO}. Our theory suggests that the tuning parameter should be chosen directly proportional to the quantity $\sqrt{K(s)}$. This choice is advantageous, since it is independent of the unknown error $\bm{e}$.

\begin{thm}[WSR-LASSO robust recovery]
\label{thm:WSRLASSOrecovery}
Let $0 < \varepsilon < 1$, $0 < \delta < 1$, $2 \leq s \leq 2^{d+1}$, $\Lambda = \Lambda^{\text{HC}}_{d,s}$ be the hyperbolic cross index set defined in \eqref{eq:defHC}, and $\{\phi_{\bm{i}}\}_{\bm{i} \in \mathbb{N}_0^d}$ be the tensor Legendre or Chebyshev polynomial basis. Draw $\bm{t}_1,\ldots,\bm{t}_m$ independently according to the corresponding measure $\nu$, with 
$$
m \gtrsim s^\gamma L(s,n,\delta,\varepsilon),
$$
where $\gamma$ is defined as in \eqref{eq:defgamma}  and $L(s,n,\delta,\varepsilon)$ is the polylogatrithmic factor defined in \eqref{eq:defpolylog}. Then, the following holds with probability at least $1-\varepsilon$. For any $f \in L^2_\nu(D) \cap L^\infty(D)$ expanded as in \eqref{eq:fexpansion}, the approximation $\tilde{f}$ defined as in \eqref{eq:ftilde} computed using the WSR-LASSO decoder \eqref{eq:WSRLASSO} with tuning parameter 
\begin{equation}
\label{thm:WSRLASSOrecovery:eq:lambda}
\lambda = \theta \sqrt{K(s)}, 
\quad \text{with } \theta \geq \frac{(5+\rho)\tau}{(1+\rho)(2+\rho)}
\end{equation}
where $\rho = 4\delta / (1-\delta)$ and $\tau = \sqrt{1-\delta}/(1-\delta)$, satisfies
\begin{align}
\label{thm:WSRLASSOrecovery:eq:Linfest}
\|f-\tilde{f}\|_{L^\infty} 
&\leq C_1 \sigma_{s,L}(\bm{x})_{1,\bm{u}} + C_2 s^{\gamma/2} \|\bm{e}\|_2,\\
\label{thm:WSRLASSOrecovery:eq:L2est}
\|f-\tilde{f}\|_{L^2_\nu}
& \leq C_3 \frac{\sigma_{s,L}(\bm{x})_{1,\bm{u}}}{s^{\gamma/2}} + C_4 \|\bm{e}\|_2 + \|f-f_\Lambda\|_{L^2_\nu},
\end{align}
where $f_\Lambda$ is defined as in \eqref{eq:fLambda} and the constants are
\begin{align*}
C_1 = \frac{3+\rho}{1-\rho}, \quad 
C_2 =  \frac{(1+\rho)\theta + 2\tau}{1-\rho}, \quad
C_3 = \frac{4(1+\rho)(2+\rho)}{1-\rho}, \quad
C_4 = \frac{(1+\rho)(2+\rho)\theta +(5+\rho) \tau}{1-\rho}.
\end{align*}
\end{thm}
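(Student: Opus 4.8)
The plan is to mirror the argument used for WLASSO (Theorem~\ref{thm:WLASSOrecovery}). Under the stated sample complexity, Theorem~\ref{thm:sampleRIP} together with Lemma~\ref{lem:LRIP->LRNSP} guarantees that $\mat{A}$ has the lower robust null space property of order $s$ with probability at least $1-\varepsilon$, and I work on this event throughout. I would first recast \eqref{eq:WSRLASSO} in augmented form as $(\hat{\bm{x}}_\Lambda,\hat{\bm{e}}) = \arg\min \|\bm{z}\|_{1,\bm{u}} + \lambda\|\bm{d}\|_2$ subject to $\mat{A}\bm{z}+\bm{d}=\bm{y}$, so that $\hat{\bm{e}} = \bm{y}-\mat{A}\hat{\bm{x}}_\Lambda$ and $\mat{A}(\hat{\bm{x}}_\Lambda-\bm{x}_\Lambda)=\bm{e}-\hat{\bm{e}}$. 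Since $(\bm{x}_\Lambda,\bm{e})$ is feasible, optimality yields the single scalar inequality $\|\hat{\bm{x}}_\Lambda\|_{1,\bm{u}}+\lambda\|\hat{\bm{e}}\|_2 \leq \|\bm{x}_\Lambda\|_{1,\bm{u}}+\lambda\|\bm{e}\|_2$. The crucial structural difference from WLASSO is that the penalty is the \emph{unsquared} residual, so this inequality is already linear in $\|\hat{\bm{e}}\|_2$ and no Young's inequality is required.

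For the $L^\infty$ estimate \eqref{thm:WSRLASSOrecovery:eq:Linfest} I would apply the distance bound (Theorem~\ref{thm:1-lev_dist_bound}) with $\bm{z}=\bm{x}_\Lambda$ and $\hat{\bm{z}}=\hat{\bm{x}}_\Lambda$, isolating $\xi := (1+\rho)(\|\hat{\bm{x}}_\Lambda\|_{1,\bm{u}}-\|\bm{x}_\Lambda\|_{1,\bm{u}}) + 2\tau\sqrt{K(s)}\,\|\mat{A}(\hat{\bm{x}}_\Lambda-\bm{x}_\Lambda)\|_2$, exactly as in the WLASSO proof. Bounding $\|\mat{A}(\hat{\bm{x}}_\Lambda-\bm{x}_\Lambda)\|_2 \leq \|\hat{\bm{e}}\|_2+\|\bm{e}\|_2$ and substituting the optimality inequality to eliminate $\|\hat{\bm{x}}_\Lambda\|_{1,\bm{u}}$, the coefficient multiplying the \emph{unknown} $\|\hat{\bm{e}}\|_2$ becomes $2\tau\sqrt{K(s)}-(1+\rho)\lambda$. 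With $\lambda=\theta\sqrt{K(s)}$ this is nonpositive whenever $\theta \geq 2\tau/(1+\rho)$, so that term can simply be dropped, giving $\xi \leq [(1+\rho)\theta+2\tau]\sqrt{K(s)}\,\|\bm{e}\|_2 \leq [(1+\rho)\theta+2\tau]\,s^{\gamma/2}\,\|\bm{e}\|_2$ via Lemma~\ref{lem:K(s)bounds}. Combined with $\|f-\tilde{f}\|_{L^\infty}\leq \sigma_{s,L}(\bm{x})_{1,\bm{u}}+\|\bm{x}_\Lambda-\hat{\bm{x}}_\Lambda\|_{1,\bm{u}}$ this produces $C_1,C_2$ as claimed.

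For the $L^2_\nu$ estimate \eqref{thm:WSRLASSOrecovery:eq:L2est} I would feed the same distance bound into the $\ell^1_{\bm{u}}\to L^2_\nu$ conversion (Lemma~\ref{lem:l1tol2}), merging the two residual contributions into $\zeta := (1+\rho)(2+\rho)(\|\hat{\bm{x}}_\Lambda\|_{1,\bm{u}}-\|\bm{x}_\Lambda\|_{1,\bm{u}}) + (5+\rho)\tau\sqrt{K(s)}\,\|\mat{A}(\hat{\bm{x}}_\Lambda-\bm{x}_\Lambda)\|_2$, precisely the $\zeta$ of the WLASSO proof. Repeating the substitution, the coefficient of $\|\hat{\bm{e}}\|_2$ is now $(5+\rho)\tau\sqrt{K(s)}-(1+\rho)(2+\rho)\lambda$, which is nonpositive \emph{exactly} when $\theta \geq (5+\rho)\tau/[(1+\rho)(2+\rho)]$; this is the origin of the threshold in \eqref{thm:WSRLASSOrecovery:eq:lambda} and the binding constraint, since one checks $(5+\rho)/(2+\rho)\geq 2$ for $\rho<1$, so it dominates the $L^\infty$ threshold. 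Discarding the $\|\hat{\bm{e}}\|_2$ term leaves $\zeta \leq [(1+\rho)(2+\rho)\theta+(5+\rho)\tau]\sqrt{K(s)}\,\|\bm{e}\|_2$, and combining with the $\sigma_{s,L}$ term through $K(s)\geq s^\gamma/4$ (Lemma~\ref{lem:K(s)bounds}) gives $C_3,C_4$.

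The main obstacle is controlling the unknown residual norm $\|\hat{\bm{e}}\|_2$, which cannot be estimated a priori. The reason it can be discarded outright — rather than absorbed via Young's inequality at the cost of the $\|\bm{e}\|_2^{-1}$-dependent tuning of WLASSO — is precisely that the unsquared penalty makes the optimality inequality linear in $\|\hat{\bm{e}}\|_2$, so a single threshold on $\theta$, \emph{independent of} $\|\bm{e}\|_2$, forces its coefficient to be nonpositive. This is the analytic mechanism behind the noise-independence of the optimal WSR-LASSO parameter; the only genuinely new verification beyond the WLASSO template is confirming that the $L^2_\nu$ threshold subsumes the $L^\infty$ one, so both bounds hold under the single hypothesis on $\theta$.
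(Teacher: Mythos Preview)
Your proposal is correct and follows essentially the same approach as the paper's proof. The only cosmetic difference is the order of operations: the paper first uses the threshold inequality $2\tau\sqrt{K(s)}\leq (1+\rho)\lambda$ to enlarge the $\|\hat{\bm{e}}\|_2$ coefficient and then invokes optimality of $(\hat{\bm{x}}_\Lambda,\hat{\bm{e}})$, whereas you first substitute optimality and then observe the resulting coefficient of $\|\hat{\bm{e}}\|_2$ is nonpositive; these are algebraically equivalent, and your explicit check that $(5+\rho)/(2+\rho)\geq 2$ for $\rho<1$ is exactly the inequality the paper uses to deduce the $L^\infty$ threshold from the $L^2_\nu$ one.
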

\begin{proof}
The proof is similar to that of Theorem~\ref{thm:WLASSOrecovery}. First, observe that \eqref{eq:WSRLASSO} admits the equivalent augmented formulation
$$
(\hat{\bm{x}}_\Lambda, \hat{\bm{e}})
= \arg \min_{(\bm{z},\bm{d})\in\mathbb{C}^{n}\times \mathbb{C}^m}
\|\bm{z}\|_{1,\bm{u}} + \lambda \|\bm{d}\|_2  \text{ s.t. } \mat{A}\bm{z} + \bm{d} = \bm{y}.
$$
Moreover, Theorem~\ref{thm:1-lev_dist_bound} (that holds with probability at least $1-\varepsilon$) implies the $\ell^1_{\bm{u}}$ estimate \eqref{thm:WLASSOrecovery:eq:l1uest}.
Now, we observe that $\rho < 1$ and \eqref{thm:WSRLASSOrecovery:eq:lambda} yield
$$
\lambda \geq \frac{(5+\rho)\tau\sqrt{K(s)}}{(1+\rho)(2+\rho)}\geq\frac{2\tau\sqrt{K(s)}}{1+\rho}.
$$ 
Therefore, using the above inequality and the optimality of $(\hat{\bm{x}}_\Lambda,\hat{\bm{e}})$, we obtain
\begin{align*}
\xi
& \leq (1+\rho) \left(\|\hat{\bm{x}}_\Lambda\|_{1,\bm{u}} 
	+ \lambda \|\hat{\bm{e}}\|_2\right)
	+ 2\tau \sqrt{K(s)} \|\bm{e}\|_2
	- (1+\rho) \|\bm{x}_\Lambda\|_{1,\bm{u}}\\
& \leq [(1+\rho) \lambda + 2\tau \sqrt{K(s)}] \|\bm{e}\|_2
 = [(1+\rho)\theta +2 \tau] \sqrt{K(s)}  \|\bm{e}\|_2, 
\end{align*}
where $\xi$ is defined as in \eqref{thm:WLASSOrecovery:eq:defxi}. Plugging the above inequality into \eqref{thm:WLASSOrecovery:eq:l1uest} gives \eqref{thm:WSRLASSOrecovery:eq:Linfest}. 

Again, analogously to the proof of Theorem~\ref{thm:WLASSOrecovery}, we have that the $L^2_\nu$ estimate \eqref{thm:WLASSOrecovery:eq:L2est_partial} holds. Then, using \eqref{thm:WSRLASSOrecovery:eq:lambda} and an argument analogous to that employed to bound $\xi$,  we obtain
$$
\zeta
\leq [(1+\rho)(2+\rho)\theta +(5+\rho) \tau] \sqrt{K(s)}  \|\bm{e}\|_2,
$$
where $\zeta$ is defined as in \eqref{thm:WLASSOrecovery:eq:defzeta}. Recalling Lemma~\ref{lem:K(s)bounds}, we have \eqref{thm:WSRLASSOrecovery:eq:L2est}. The proof is thus complete.
\end{proof}

\subsection{WLAD-LASSO}
\label{sec:l1pen}

The proof strategy to show the robustness of WLAD-LASSO under unknown error still follows the guidelines of Remark~\ref{rem:proof_startegy}. Yet, in this case we need to introduce a further level of technical difficulty, dealing with the framework of sparsity in levels, introduced in \cite{Adcock2017breaking}. In particular, we will make use of a weighted version of the 2-level sparsity. We  introduce this notion and some related technical lemmas in \S\ref{sec:WLADLASSO_prelim}. Then, we  show the robustness of WLAD-LASSO in \S\ref{sec:WLADLASSO}.

For the sake of generality, the results in this section are proved for a variant of \eqref{eq:WLADLASSO} with weighted $\ell^1$ penalization of the data-fidelity term\begin{equation}
\label{eq:WLADLASSOweighted}
\hat{\bm{x}}_\Lambda 
:= \arg \min_{\bm{z} \in \mathbb{C}^n} 
\|\bm{z}\|_{1,\bm{u}} + \lambda \|\mat{A}\bm{z} - \bm{y}\|_{1,\bm{v}},
\end{equation}
where $\bm{v} \in \mathbb{R}^m$ is such that $v_i \geq 1$ for every $i \in [m]$.

\subsubsection{Preliminaries}
\label{sec:WLADLASSO_prelim}


We first introduce some notation regarding the weighted 2-level sparsity framework \cite{Adcock2017breaking}.  

\begin{defn}[Weighted 2-level sparsity notation]
\label{def:2-lev_notation}
Let $n,p \in \mathbb{N}$. Consider a vector $\bm{z} \in \mathbb{C}^{n+p}$ and weights $\bm{w}\in\mathbb{R}^{n+p}$, partitioned as
$$
\bm{z} = \bmat{\bm{z}_1\\ \bm{z}_2}, \quad
\bm{w}=\bmat{\bm{w}_1\\\bm{w}_2},
$$
such that $\bm{z}_1 \in \mathbb{C}^n$, $\bm{z}_2 \in \mathbb{C}^p$, $\bm{w}_1 \in \mathbb{R}^n$, and $\bm{w}_2 \in \mathbb{R}^p$. Then, given $\bm{J}:=(J_1, J_2)\in\mathbb{R}^2$, $\bm{z}$ is said to be \emph{weighted $\bm{J}$-sparse} if
$$
|\supp(\bm{z}_i)|_{\bm{w}_i} \leq J_i, \quad \forall i = 1,2.
$$
The \emph{weighted best $\bm{J}$-term approximation error} to $\bm{z}$ is
$$
\sigma_{\bm{J}}(\bm{z})_{1,\bm{w}} 
:= \inf_{\bm{z}':|\supp(\bm{z}'_i)|_{\bm{w}_i} \leq J_i} \|\bm{z}-\bm{z}'\|_{1,\bm{w}}.
$$
\end{defn}
We also introduce a \emph{scale} $\lambda > 0$ that will allow us to switch from the two- to the one-level formalism.
\begin{defn}[Scaled weights and scaled sparsity]
Let $\lambda >0$ be a \emph{scale}, then for every $\bm{w} = \bmat{\bm{w}_1\\\bm{w}_2} \in \mathbb{C}^{n+p}$ and $\bm{J} = (J_1,J_2) \in \mathbb{R}^{2}$, we define the \emph{scaled weights} $\bm{w}_\lambda$ and the \emph{scaled sparsity} $J_\lambda$ as
\begin{equation}
\label{eq:defwJlambda}
\bm{w}_\lambda := \bmat{\bm{w}_1\\ \lambda \bm{w}_2}, \quad 
J_\lambda = J_1 + \lambda^2 J_2.
\end{equation}
\end{defn}
Notice that if $\bm{z}$ is $\bm{J}$-sparse, then
\begin{equation}
\label{eq:l1wlambda-l2_ineq}
\|\bm{z}\|_{1,\bm{w}_\lambda} \leq \sqrt{J_\lambda} \|\bm{z}\|_2.
\end{equation}
Using the notation above, the WLAD-LASSO optimization program \eqref{eq:WLADLASSOweighted} admits an equivalent augmented formulation
as a weighted basis pursuit program, i.e.\
\begin{equation}
\label{eq:WLADLASSO_aug}
\bmat{\hat{\bm{x}}_\Lambda\\ \hat{\bm{e}}}
= \arg \min_{\bm{z} \in \mathbb{C}^{n+m}}
\|\bm{z}\|_{1,\bm{w}_\lambda}  \text{ s.t. } \mat{M}\bm{z} = \bm{y},
\end{equation}
where $\bm{w}:=\bmat{\bm{u}\\\bm{v}}$, $\mat{M} :=\bmat{\mat{A} & \mat{I}}$ and $\bm{w}_\lambda$ is defined as in \eqref{eq:defwJlambda}. 

We are now in a position to define the 2-level weighted robust null space property.
\begin{defn}[2-level weighted robust null space property]
\label{def:2levNSP}
A matrix $\mat{M} \in \mathbb{C}^{m \times (N+p)}$ is said to satisfy the \emph{2-level weighted robust null space property} of scale $\lambda > 0$ and order $\bm{J} = (J_1, J_2)$ if
$$
\|\bm{z}_{S_1 \cup S_2}\|_2 
\leq 
\frac{\rho}{\sqrt{J_\lambda}} \|\bm{z}_{{(S_1 \cup S_2)}^c}\|_{1,\bm{w}_\lambda}
+ \tau \|\mat{M} \bm{z}\|_2,
$$
for every $S_1 \subseteq [n], S_2 \subseteq n + [p]$, such that $|S_i|_{\bm{w}_i} \leq J_i$, for $i = 1,2$.

\end{defn}
Notice that in the one-level case (i.e., when $p = 0$) and with $J_1 = K(s)$ Definition~\ref{def:2levNSP} and Definition~\ref{def:lowerNSP} are equivalent.

We now prove a result analogous to Theorem~\ref{thm:1-lev_dist_bound}. It is an upper bound to the $\ell^1_{\bm{w}_\lambda}$ distance between two arbitrary finite-dimensional vectors under the weighted robust null space property. This result is the 2-level generalization of \cite[Theorem 4.2]{Rauhut2016} and is based on the same proof strategy. 
\begin{thm}[$\ell^1_{\bm{w}_\lambda}$ distance bound]
\label{thm:2-lev_dist_bound}
If the matrix $\mat{M} \in \mathbb{C}^{m \times (N+p)}$ has the 2-level weighted robust null space property, then for every $\bm{z}, \hat{\bm{z}} \in \mathbb{C}^{n+p}$  the following holds:
\begin{equation}
\|\bm{z}-\hat{\bm{z}}\|_{1,\bm{w}_\lambda} 
\leq 
\frac{1+\rho}{1-\rho} 
(2 \sigma_{\bm{J}}(\bm{z})_{1,\bm{w}_\lambda} 
+ \|\hat{\bm{z}}\|_{1,\bm{w}_\lambda} 
- \|\bm{z}\|_{1,\bm{w}_\lambda})
+ \frac{2\tau \sqrt{J_\lambda}}{1-\rho} 
\|\mat{M}(\bm{z}-\hat{\bm{z}})\|_2.
\end{equation}
\end{thm}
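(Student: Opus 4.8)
The plan is to mirror the classical derivation of an $\ell^1$ distance bound from the robust null space property (as in \cite[Theorem 4.2]{Rauhut2016}), but carried out throughout with the scaled weights $\bm{w}_\lambda$ and the combined sparsity $J_\lambda$. Write $\bm{v} := \bm{z} - \hat{\bm{z}}$. First I would fix a support $S = S_1 \cup S_2$, with $S_1 \subseteq [n]$, $S_2 \subseteq n+[p]$ and $|S_i|_{\bm{w}_i} \leq J_i$, realizing the weighted best $\bm{J}$-term approximation to $\bm{z}$, so that $\|\bm{z}_{S^c}\|_{1,\bm{w}_\lambda} = \sigma_{\bm{J}}(\bm{z})_{1,\bm{w}_\lambda}$. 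Since there are only finitely many admissible supports and the optimal approximant on a fixed support is simply $\bm{z}_S$, the infimum is attained and $S$ obeys the level-wise constraints demanded by the 2-level robust null space property.

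Second, I would establish the key lower-tail/upper-head inequality. Because $\bm{v}_S$ is supported on the $\bm{J}$-sparse set $S$, the scaled inequality \eqref{eq:l1wlambda-l2_ineq} gives $\|\bm{v}_S\|_{1,\bm{w}_\lambda} \leq \sqrt{J_\lambda}\,\|\bm{v}_S\|_2$. Feeding this into the 2-level weighted robust null space property (Definition~\ref{def:2levNSP}) applied to $\bm{v}$ yields
\[
\|\bm{v}_S\|_{1,\bm{w}_\lambda} \leq \rho\,\|\bm{v}_{S^c}\|_{1,\bm{w}_\lambda} + \tau\sqrt{J_\lambda}\,\|\mat{M}\bm{v}\|_2,
\]
and hence, adding $\|\bm{v}_{S^c}\|_{1,\bm{w}_\lambda}$ to both sides,
\[
\|\bm{v}\|_{1,\bm{w}_\lambda} \leq (1+\rho)\,\|\bm{v}_{S^c}\|_{1,\bm{w}_\lambda} + \tau\sqrt{J_\lambda}\,\|\mat{M}\bm{v}\|_2.
\]

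Third, I would control the tail $\|\bm{v}_{S^c}\|_{1,\bm{w}_\lambda}$ using the optimality data encoded in $\|\hat{\bm{z}}\|_{1,\bm{w}_\lambda} - \|\bm{z}\|_{1,\bm{w}_\lambda}$. Writing $\hat{\bm{z}} = \bm{z} - \bm{v}$ and splitting over $S$ and $S^c$, the reverse triangle inequality on $S$ together with the triangle inequality on $S^c$ gives $\|\hat{\bm{z}}\|_{1,\bm{w}_\lambda} \geq \|\bm{z}_S\|_{1,\bm{w}_\lambda} - \|\bm{v}_S\|_{1,\bm{w}_\lambda} + \|\bm{v}_{S^c}\|_{1,\bm{w}_\lambda} - \|\bm{z}_{S^c}\|_{1,\bm{w}_\lambda}$. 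Substituting $\|\bm{z}_S\|_{1,\bm{w}_\lambda} = \|\bm{z}\|_{1,\bm{w}_\lambda} - \sigma_{\bm{J}}(\bm{z})_{1,\bm{w}_\lambda}$ and rearranging produces
\[
\|\bm{v}_{S^c}\|_{1,\bm{w}_\lambda} \leq 2\sigma_{\bm{J}}(\bm{z})_{1,\bm{w}_\lambda} + \|\hat{\bm{z}}\|_{1,\bm{w}_\lambda} - \|\bm{z}\|_{1,\bm{w}_\lambda} + \|\bm{v}_S\|_{1,\bm{w}_\lambda}.
\]
Inserting the robust-null-space bound for $\|\bm{v}_S\|_{1,\bm{w}_\lambda}$ from the previous step and using $\rho < 1$ to absorb the resulting $\rho\,\|\bm{v}_{S^c}\|_{1,\bm{w}_\lambda}$ term, I would solve for $\|\bm{v}_{S^c}\|_{1,\bm{w}_\lambda}$, dividing by $1-\rho$.

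Finally, I would back-substitute this tail estimate into the inequality from the second step; the two $\|\mat{M}\bm{v}\|_2$ contributions combine through $\tfrac{1+\rho}{1-\rho} + 1 = \tfrac{2}{1-\rho}$ to give exactly the claimed bound with the stated constants $\tfrac{1+\rho}{1-\rho}$ and $\tfrac{2\tau\sqrt{J_\lambda}}{1-\rho}$, after recalling $\bm{v} = \bm{z} - \hat{\bm{z}}$. The argument is otherwise routine bookkeeping; the one point requiring genuine care is the first step, namely verifying that the optimal $\bm{J}$-term support respects the two weighted-cardinality budgets $|S_i|_{\bm{w}_i} \leq J_i$ \emph{separately}, since this is precisely what licenses the use of the 2-level (rather than one-level) robust null space property and of the scaled inequality \eqref{eq:l1wlambda-l2_ineq}.
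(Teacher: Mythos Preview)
Your proposal is correct and follows essentially the same route as the paper's proof: fix a support $T=S_1\cup S_2$ realizing $\sigma_{\bm{J}}(\bm{z})_{1,\bm{w}_\lambda}$, combine \eqref{eq:l1wlambda-l2_ineq} with the 2-level robust null space property to bound $\|(\bm{z}-\hat{\bm{z}})_T\|_{1,\bm{w}_\lambda}$, derive the tail estimate via the triangle/reverse triangle inequalities, absorb the $\rho$-term, and back-substitute. Your observation that one must check the optimal support respects the two weighted-cardinality budgets $|S_i|_{\bm{w}_i}\leq J_i$ separately is apt and is handled in the paper exactly as you suggest (the infimum is over finitely many level-wise admissible supports, hence attained).
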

\begin{proof}
Choose $S_1 \subseteq [n]$ and $S_2 \subseteq [p]$ such that $\sigma_{\bm{J}}(\bm{z})_{1,\bm{w}_\lambda} = \|\bm{z}-\bm{z}_{T}\|_{1,\bm{w}_\lambda} = \|\bm{z}_{T^c}\|_{1,\bm{w}_\lambda}$ and define $T:=S_1 \cup S_2$. Using the 2-level weighted robust null space property and recalling \eqref{eq:l1wlambda-l2_ineq}, we obtain
\begin{equation}
\label{thm:distbound:eq:(x-z)_S}
\|(\bm{z}-\hat{\bm{z}})_{T}\|_{1,\bm{w}_\lambda} 
\leq 
\rho \|(\bm{z}-\hat{\bm{z}})_{T^c}\|_{1,\bm{w}_\lambda}
+
\tau \sqrt{J_\lambda} \|\mat{M}(\bm{z}-\hat{\bm{z}})\|_2.
\end{equation}
Then, we estimate
\begin{align*}
\|(\bm{z}-\hat{\bm{z}})_{T^c}\|_{1,\bm{w}_\lambda}
& \leq
\|\bm{z}_{T^c}\|_{1,\bm{w}_\lambda} 
+ \|\hat{\bm{z}}_{T^c}\|_{1,\bm{w}_\lambda}\\
& =
2 \|\bm{z}_{T^c}\|_{1,\bm{w}_\lambda}
+ \|\bm{z}_{T}\|_{1,\bm{w}_\lambda}
- \|\hat{\bm{z}}_{T}\|_{1,\bm{w}_\lambda}
+ \|\hat{\bm{z}}\|_{1,\bm{w}_\lambda}
- \|\bm{z}\|_{1,\bm{w}_\lambda}
\\
& \leq 
2 \sigma_{\bm{J}}(\bm{z})_{1,\bm{w}_\lambda}
+ \|(\bm{z}-\hat{\bm{z}})_{T}\|_{1,\bm{w}_\lambda}
+ \|\hat{\bm{z}}\|_{1,\bm{w}_\lambda}
- \|\bm{z}\|_{1,\bm{w}_\lambda}.
\end{align*}
Plugging \eqref{thm:distbound:eq:(x-z)_S} into the inequality above and solving for $\|(\bm{z}-\hat{\bm{z}})_{T^c}\|_{1,\bm{w}_\lambda}$, we see that
\begin{equation}
\label{thm:distbound:eq:(x-z)_Sc}
\|(\bm{z}-\hat{\bm{z}})_{T^c}\|_{1,\bm{w}_\lambda}
\leq \frac{2 \sigma_{\bm{J}}(\bm{z})_{1,\bm{w}_\lambda}
+ \tau \sqrt{J_\lambda} \|\mat{M}(\bm{z}-\hat{\bm{z}})\|
+ \|\hat{\bm{z}}\|_{1,\bm{w}_\lambda}
- \|\bm{z}\|_{1,\bm{w}_\lambda}}{1-\rho}.
\end{equation}
Combining \eqref{thm:distbound:eq:(x-z)_S} and \eqref{thm:distbound:eq:(x-z)_Sc} with the triangle inequality $
\|\bm{z}-\hat{\bm{z}}\|_{1,\bm{w}_\lambda} 
\leq
\|(\bm{z}-\hat{\bm{z}})_{T}\|_{1,\bm{w}_\lambda}
+
\|(\bm{z}-\hat{\bm{z}})_{T^c}\|_{1,\bm{w}_\lambda}
$
concludes the proof.
\end{proof}


Before proving Theorem~\ref{thm:WLADLASSOrecovery}, we need to introduce some more technical elements to the picture. First, we consider the 2-level weighted restricted isometry property (a weighted version of the restricted isometry property in levels, introduced in \cite{Bastounis2014}, also studied in \cite{Adcock2017compressed} for sparse corruptions). 
\begin{defn}[2-level weighted restricted isometry property]
Let $\bm{J}\in \mathbb{R}^2$, and $\bm{w} \in \mathbb{R}^{n+p}$ partitioned as in Definition~\ref{def:2-lev_notation}. Then, a matrix $\mat{M} \in \mathbb{C}^{m \times (n+p)}$ is said to have the \emph{2-level weighted restricted isometry property} of order $\bm{J}$ if there exists a constant $0 < \delta < 1$ such that 
$$
(1-\delta)\|\bm{z}\|_2^2 \leq \|\mat{M}\bm{z}\|_2^2 \leq (1+\delta)\|\bm{z}\|_2^2,
\quad \forall \bm{z} \in \mathbb{C}^{n+p}: |\supp(\bm{z}_i)|_{\bm{w}_i}\leq J_i, \; \forall i= 1,2.
$$ 
The smallest constant such that this property holds is called the $\bm{J}^{th}$ \emph{2-level weighted restricted isometry constant} of $\mat{M}$ and it is denoted as $\delta_{\bm{J}}$.
\end{defn}

As a consequence of the definition above, we have the following technical result. 
\begin{lem}
\label{lem:disj_bound}
Consider $\bm{z}, \hat{\bm{z}} \in \mathbb{C}^{n+p}$, weights $\bm{w}  \in \mathbb{R}^{n+p}$, and $\bm{J},\hat{\bm{J}} \in \mathbb{R}^2$ partitioned as in Definition~\ref{def:2-lev_notation} and such that $|\supp(\bm{z}_i)|_{\bm{w}_i}\leq J_i$ and $|\supp(\hat{\bm{z}}_i)|_{\bm{w}_i}\leq \hat{J}_i$, for $i = 1,2$. Moreover, assume that $\supp(\bm{z}_1) \cap \supp(\hat{\bm{z}}_1) = \supp(\bm{z}_2) \cap \supp(\hat{\bm{z}}_2) =\emptyset$. Then, it holds
$$
|\left<\mat{M}\bm{z},\mat{M}\hat{\bm{z}}\right>|
\leq \delta_{\bm{J}+\hat{\bm{J}}} \|\bm{z}\|_2 \|\hat{\bm{z}}\|_2, \quad \text{with }\mat{M} = \bmat{\mat{A} & \mat{I}}.
$$
\end{lem}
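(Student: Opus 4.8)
The plan is to prove this lemma by the classical ``restricted isometry implies near-orthogonality on disjoint supports'' argument (cf.\ \cite{Foucart2013}), carefully adapted to the 2-level weighted setting and to complex scalars. First I would dispose of trivial cases: if $\bm{z} = \bm{0}$ or $\hat{\bm{z}} = \bm{0}$ the claimed inequality reads $0 \le 0$, so I may assume both are nonzero and, by homogeneity of the form $(\bm{z},\hat{\bm{z}}) \mapsto \langle \mat{M}\bm{z}, \mat{M}\hat{\bm{z}}\rangle$ together with the fact that rescaling a vector changes neither its support nor the weighted cardinality of its support, normalize so that $\|\bm{z}\|_2 = \|\hat{\bm{z}}\|_2 = 1$.

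Next I would reduce to a real, nonnegative inner product. I multiply $\hat{\bm{z}}$ by a suitable unit-modulus scalar $\mathrm{e}^{\mathrm{i}\theta}$ so that $\langle \mat{M}\bm{z}, \mat{M}\hat{\bm{z}}\rangle$ becomes real and nonnegative; this rotation changes neither $\|\hat{\bm{z}}\|_2$, nor $\supp(\hat{\bm{z}}_i)$ (hence preserves $|\supp(\hat{\bm{z}}_i)|_{\bm{w}_i} \le \hat{J}_i$), nor the disjointness hypothesis, and it turns $\langle \mat{M}\bm{z}, \mat{M}\hat{\bm{z}}\rangle$ into $|\langle \mat{M}\bm{z}, \mat{M}\hat{\bm{z}}\rangle|$. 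Thus it suffices to bound $\operatorname{Re}\langle \mat{M}\bm{z}, \mat{M}\hat{\bm{z}}\rangle$ from above.

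The heart of the argument is to apply the 2-level weighted restricted isometry property of order $\bm{J}+\hat{\bm{J}}$ to the two vectors $\bm{z} \pm \hat{\bm{z}}$. The level-wise disjointness hypothesis $\supp(\bm{z}_i) \cap \supp(\hat{\bm{z}}_i) = \emptyset$ is exactly what makes this legitimate: for $i = 1,2$ the support of $(\bm{z}\pm\hat{\bm{z}})_i$ is the disjoint union $\supp(\bm{z}_i)\cup\supp(\hat{\bm{z}}_i)$, whence $|\supp((\bm{z}\pm\hat{\bm{z}})_i)|_{\bm{w}_i} = |\supp(\bm{z}_i)|_{\bm{w}_i} + |\supp(\hat{\bm{z}}_i)|_{\bm{w}_i} \le J_i + \hat{J}_i$, so that $\bm{z}\pm\hat{\bm{z}}$ is weighted $(\bm{J}+\hat{\bm{J}})$-sparse. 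The same disjointness gives $\|\bm{z}\pm\hat{\bm{z}}\|_2^2 = \|\bm{z}\|_2^2 + \|\hat{\bm{z}}\|_2^2 = 2$. Feeding $\bm{z}+\hat{\bm{z}}$ and $\bm{z}-\hat{\bm{z}}$ into the upper and lower restricted isometry bounds respectively, and using the polarization identity
$$
4\operatorname{Re}\langle \mat{M}\bm{z}, \mat{M}\hat{\bm{z}}\rangle = \|\mat{M}(\bm{z}+\hat{\bm{z}})\|_2^2 - \|\mat{M}(\bm{z}-\hat{\bm{z}})\|_2^2,
$$
I obtain $4\operatorname{Re}\langle \mat{M}\bm{z}, \mat{M}\hat{\bm{z}}\rangle \le 2(1+\delta_{\bm{J}+\hat{\bm{J}}}) - 2(1-\delta_{\bm{J}+\hat{\bm{J}}}) = 4\delta_{\bm{J}+\hat{\bm{J}}}$. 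Combined with the reduction above this yields $|\langle \mat{M}\bm{z}, \mat{M}\hat{\bm{z}}\rangle| \le \delta_{\bm{J}+\hat{\bm{J}}}$ for the normalized vectors, and undoing the normalization by multiplying through by $\|\bm{z}\|_2\|\hat{\bm{z}}\|_2$ gives the claim.

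I do not expect any genuine obstacle here; the lemma is a routine consequence of the restricted isometry property. The only points requiring care --- and the places where the 2-level weighted formulation departs from the textbook statement --- are (i) the phase rotation needed to pass from the real part to the modulus in the complex case, and (ii) the verification that the weighted support cardinality of $\bm{z}\pm\hat{\bm{z}}$ splits additively across the two levels, which is precisely where the level-wise disjointness hypothesis enters. Everything else is the standard parallelogram computation.
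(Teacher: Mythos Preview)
Your proof is correct, but it follows a different route from the paper's. The paper instead uses the Gram-matrix characterization of the restricted isometry constant: since the supports are level-wise disjoint, $\langle \bm{z},\hat{\bm{z}}\rangle = 0$, so with $S_1 := \supp(\bm{z}_1)\cup\supp(\hat{\bm{z}}_1)$ and $S_2 := \supp(\bm{z}_2)\cup\supp(\hat{\bm{z}}_2)$ one writes
\[
|\langle \mat{M}\bm{z},\mat{M}\hat{\bm{z}}\rangle|
= \bigl|\bigl\langle (\mat{M}_{S_1\cup S_2}^*\mat{M}_{S_1\cup S_2}-\mat{I})\,\bm{z}_{S_1\cup S_2},\,\hat{\bm{z}}_{S_1\cup S_2}\bigr\rangle\bigr|
\le \|\mat{M}_{S_1\cup S_2}^*\mat{M}_{S_1\cup S_2}-\mat{I}\|_{2\to 2}\,\|\bm{z}\|_2\|\hat{\bm{z}}\|_2,
\]
and then bounds the operator norm by $\delta_{\bm{J}+\hat{\bm{J}}}$ directly from the definition of the 2-level weighted RIP constant. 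This avoids the normalization and phase-rotation steps you needed and handles the complex case without a separate reduction. Your parallelogram argument, on the other hand, is closer to the textbook treatment in \cite{Foucart2013} and makes more explicit where the two RIP inequalities (upper and lower) are each used; it is arguably more self-contained since it does not invoke the equivalence between the RIP constant and the operator norm of the deviated Gram matrix. Either approach is perfectly acceptable here.
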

\begin{proof}
Let $S_1:=\supp(\bm{z}_1)\cup \supp(\hat{\bm{z}}_1)$ and $S_2:=\supp(\hat{\bm{z}}_2)\cup \supp(\hat{\bm{z}}_2)$. Then $|S_i|_{\bm{w}_i} \leq J_i + \hat{J}_i$, for $i = 1,2$. Using the disjointedness of the supports of $\bm{z}_1$ and $\hat{\bm{z}}_1$ and of those of $\bm{z}_2$ and $\hat{\bm{z}}_2$, we have
\begin{align*}
|\left<\mat{M}\bm{z},\mat{M}\hat{\bm{z}}\right>|
&= \left|\left<\mat{M}\bm{z},\mat{M} \hat{\bm{z}}\right>
-\left<\bm{z},\hat{\bm{z}}\right>\right|
\\
&=\left|\left<\bmat{\mat{A}_{S_1}\ \mat{I}_{S_2}}\bm{z}_{S_1 \cup S_2}\bmat{\mat{A}_{S_1}\ \mat{I}_{S_2}} \hat{\bm{z}}_{S_1 \cup S_2}\right>
-\left<\bm{z}_{S_1 \cup S_2},\hat{\bm{z}}_{S_1 \cup S_2}\right>\right|
\\
& = \left|\left<
\left(\bmat{\mat{A}_{S_1}\ \mat{I}_{S_2}}^*
\bmat{\mat{A}_{S_1}\ \mat{I}_{S_2}}-\mat{I}\right)
\bm{z}_{S_1 \cup S_2},
\hat{\bm{z}}_{S_1 \cup S_2}
\right>\right|
\\
& \leq 
\|\bmat{\mat{A}_{S_1}\ \mat{I}_{S_2}}^*
\bmat{\mat{A}_{S_1}\ \mat{I}_{S_2}}-\mat{I}\|_{2\rightarrow2}
\left\|\bm{z}_{S_1 \cup S_2}\right\|_2
\left\|\hat{\bm{z}}_{S_1 \cup S_2}\right\|_2
\\
& \leq \delta_{\bm{J}+\hat{\bm{J}}}\|\bm{z}\|_2 \|\hat{\bm{z}}\|_2.
\end{align*}
This completes the proof.
\end{proof}

In view of Remark~\ref{rem:proof_startegy}, we now prove that the restricted isometry property implies the robust null space property in the 2-level weighted setting.
\begin{thm}[2-level weighted restricted isometry property  $\Rightarrow$ 2-level weighted robust null space property]
\label{thm:2-levRIP->2-levNSP}
Given $\bm{J} = (K,H)\in \mathbb{R}^2$, any matrix $\mat{M} \in \mathbb{C}^{m \times (n+p)}$ of the form $\mat{M} = \bmat{\mat{A} & \mat{I}}$ with $(3\bm{J})^{th}$ weighted restricted isometry constant
\begin{equation}
\label{thm:2-levRIP->2-levNSP:eq:cond_delta}
\delta_{3\bm{J}}<\frac{1}{1+4\Theta},
\quad \text{with } \Theta := \frac{\sqrt{K + \lambda^2 H}}{\min\{\sqrt{K},\lambda \sqrt{H}\}},
\end{equation}
with respect to weights $\bm{w} = \bmat{\bm{u}\\\bm{v}}\in\mathbb{R}^{n+p}$ such that $\bm{J}\geq \frac{4}{3}(\|\bm{u}\|_{\infty}^2,\|\bm{v}\|_\infty^2)$ satisfies the 2-level weighted robust null space property of order $\bm{J}$ with constants 
\begin{equation}
\label{thm:2-levRIP->2-levNSP:eq:rho_tau}
\rho=\frac{4\delta_{3\bm{J}}\Theta}{(1-\delta_{3\bm{J}})},\quad
\tau=\frac{\sqrt{1+\delta_{3\bm{J}}}}{1-\delta_{3\bm{J}}}.
\end{equation}
\end{thm}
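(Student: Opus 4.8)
The plan is to adapt the classical argument that the restricted isometry property implies the robust null space property (e.g.\ \cite[Theorem~6.13]{Foucart2013}) to the 2-level weighted setting, with the cross-term estimate of Lemma~\ref{lem:disj_bound} playing the role of the usual ``disjoint support'' consequence of the RIP. Fix $\bm{z}\in\mathbb{C}^{n+p}$ and sets $S_1\subseteq[n]$, $S_2\subseteq n+[p]$ with $|S_i|_{\bm{w}_i}\le J_i$, and set $T:=S_1\cup S_2$. The goal is to establish the inequality of Definition~\ref{def:2levNSP} for this $T$; since $\|\bm{z}_T\|_2\le\|\bm{z}_{\tilde T}\|_2$ for any $\tilde T\supseteq T$, it suffices to bound $\|\bm{z}_{\tilde T}\|_2$ for a conveniently enlarged set $\tilde T$.

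First I would build the block decomposition \emph{level by level}. Ordering the entries of $\bm{z}_{S_1^c}$ (resp.\ $\bm{z}_{S_2^c}$) by decreasing modulus, I greedily group consecutive indices into blocks $S_1^{(1)},S_1^{(2)},\dots$ (resp.\ $S_2^{(1)},S_2^{(2)},\dots$) of $\bm{u}$-weighted cardinality at most $K$ (resp.\ $\bm{v}$-weighted cardinality at most $H$). The hypothesis $\bm{J}\ge\frac43(\|\bm{u}\|_\infty^2,\|\bm{v}\|_\infty^2)$ guarantees each block (bar the last) has weighted cardinality comparable to $K$ (resp.\ $H$), which is exactly what a weighted Stechkin-type block-sum estimate (in the spirit of Lemma~\ref{lem:stechkin}) requires. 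Setting $T_k:=S_1^{(k)}\cup S_2^{(k)}$, padding the shorter level with empty sets, each $T_k$ is $\bm{J}$-sparse in the sense of Definition~\ref{def:2-lev_notation}, and I take $\tilde T:=T\cup T_1$, which is $2\bm{J}$-sparse. Applying the lower 2-level RIP bound to $\bm{z}_{\tilde T}$ gives $(1-\delta_{3\bm J})\|\bm{z}_{\tilde T}\|_2^2\le\|\mat{M}\bm{z}_{\tilde T}\|_2^2$ (using $\delta_{2\bm J}\le\delta_{3\bm J}$), after which I expand
$$
\|\mat{M}\bm{z}_{\tilde T}\|_2^2
=\langle\mat{M}\bm{z}_{\tilde T},\mat{M}\bm{z}\rangle
-\sum_{k\ge2}\langle\mat{M}\bm{z}_{\tilde T},\mat{M}\bm{z}_{T_k}\rangle.
$$
The first term is bounded by $\sqrt{1+\delta_{3\bm J}}\,\|\bm{z}_{\tilde T}\|_2\,\|\mat{M}\bm{z}\|_2$ via the upper RIP bound, while each cross term is controlled by Lemma~\ref{lem:disj_bound}: since $\bm{z}_{\tilde T}$ is $2\bm{J}$-sparse and $\bm{z}_{T_k}$ is $\bm{J}$-sparse with supports disjoint from $\tilde T$ in each level, one gets $|\langle\mat{M}\bm{z}_{\tilde T},\mat{M}\bm{z}_{T_k}\rangle|\le\delta_{3\bm J}\|\bm{z}_{\tilde T}\|_2\|\bm{z}_{T_k}\|_2$, where the order $2\bm J+\bm J=3\bm J$ is precisely the origin of the hypothesis on $\delta_{3\bm J}$.

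Dividing through by $\|\bm{z}_{\tilde T}\|_2$ reduces the estimate to controlling $\sum_{k\ge2}\|\bm{z}_{T_k}\|_2$. Here I apply the weighted block-sum estimate separately in each level, obtaining bounds of the form $\sum_{k\ge2}\|\bm{z}_{S_1^{(k)}}\|_2\lesssim\|\bm{z}_{S_1^c}\|_{1,\bm{u}}/\sqrt{K}$ and $\sum_{k\ge2}\|\bm{z}_{S_2^{(k)}}\|_2\lesssim\|\bm{z}_{S_2^c}\|_{1,\bm{v}}/\sqrt{H}$, and then recombine them through
$$
\frac{\|\bm{z}_{S_1^c}\|_{1,\bm{u}}}{\sqrt{K}}
+\frac{\|\bm{z}_{S_2^c}\|_{1,\bm{v}}}{\sqrt{H}}
\le\frac{1}{\min\{\sqrt K,\lambda\sqrt H\}}
\bigl(\|\bm{z}_{S_1^c}\|_{1,\bm{u}}+\lambda\|\bm{z}_{S_2^c}\|_{1,\bm{v}}\bigr)
=\frac{\Theta}{\sqrt{J_\lambda}}\|\bm{z}_{T^c}\|_{1,\bm{w}_\lambda},
$$
using $\|\bm{z}_{T^c}\|_{1,\bm{w}_\lambda}=\|\bm{z}_{S_1^c}\|_{1,\bm{u}}+\lambda\|\bm{z}_{S_2^c}\|_{1,\bm{v}}$, the definition of $\Theta$, and $J_\lambda=K+\lambda^2H$. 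This is exactly where $\Theta$ enters. Collecting terms yields $\|\bm{z}_T\|_2\le\|\bm{z}_{\tilde T}\|_2\le\frac{\rho}{\sqrt{J_\lambda}}\|\bm{z}_{T^c}\|_{1,\bm{w}_\lambda}+\tau\|\mat{M}\bm{z}\|_2$ with $\rho=\frac{4\delta_{3\bm J}\Theta}{1-\delta_{3\bm J}}$ and $\tau=\frac{\sqrt{1+\delta_{3\bm J}}}{1-\delta_{3\bm J}}$, and a direct computation shows $\delta_{3\bm J}<\frac{1}{1+4\Theta}$ is equivalent to $\rho<1$.

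The main obstacle I anticipate is the bookkeeping of the 2-level block decomposition: the two levels generically yield different numbers of blocks and different per-block weighted cardinalities, so pairing them into genuine $\bm{J}$-sparse blocks $T_k$, verifying the disjoint-support hypotheses of Lemma~\ref{lem:disj_bound}, and reconciling the two per-level $\ell^1$ norms with the single scaled norm $\|\cdot\|_{1,\bm{w}_\lambda}$ must be done carefully. It is precisely this last recombination that forces the $\Theta$-dependent constant, in contrast to the $\Theta$-free constant of the one-level Lemma~\ref{lem:LRIP->LRNSP}.
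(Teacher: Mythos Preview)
Your proposal is correct and follows essentially the same argument as the paper's proof: level-by-level greedy block decomposition of the complement, the RIP lower bound on the $2\bm{J}$-sparse head $T\cup T_1$, expansion with Lemma~\ref{lem:disj_bound} handling the cross terms (this is exactly where the order $3\bm{J}$ arises), per-level weighted block-sum bounds $\sum_{k\ge 2}\|\cdot\|_2\le 4\|\cdot\|_{1,\bm{u}}/\sqrt{K}$, and recombination through the $\Theta$ factor. One small clarification: the blocks must be formed by ordering the entries in decreasing order of $|z_i|/w_i$ (not just $|z_i|$), since this is what the weighted block-sum estimate requires and what yields the constant $4$ from the hypothesis $\bm{J}\ge\tfrac{4}{3}(\|\bm{u}\|_\infty^2,\|\bm{v}\|_\infty^2)$.
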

\begin{proof}
Consider  $\bm{x}\in \mathbb{C}^{n}$ and $S \subset[n]$ with $|S|_{\bm{u}}\leq K$. We partition $S^c = [n] \setminus S$ into disjoint blocks $S_1, S_2, \ldots$ with $K-\|\bm{u}\|^2_{\infty} \leq |S_\ell|_{\bm{u}} \leq K$ according to the non-increasing rearrangement of $(|x_j| u_j^{-1})_{j \in S^c}$. As a result, we have $|x_{j}|u_{j}^{-1} \leq |x_{k}|u_{k}^{-1}$ for all $j \in S_\ell, k\in S_{\ell-1}$, and $\ell\geq 2$. Similarly, we let $\bm{e} \in \mathbb{C}^{p}$ and $T \subset [p]$ with $|T|_{\bm{w}_2} \leq H$. We partition $T^c$ into blocks $T_1,T_2,\ldots$ with $H-\|\bm{v}\|^2_{\infty} \leq |S_\ell|_{\bm{v}} \leq H$ according to the non-increasing rearrangement of $(|e_j| v_j^{-1})_{j \in T^c}$. We have $|e_{j}|v_{j}^{-1} \leq |e_{k}|v_{k}^{-1}$ for all $j \in T_\ell, k\in T_{\ell-1}$, and $\ell\geq 2$.  Using  and the 2-level weighted restricted isometry property, we have
\begin{align*}
\left\|\bmat{\bm{x}_{S \cup S_1}\\\bm{e}_{T \cup T_1}}\right\|_2^2 
&\leq \frac{1}{1-\delta_{2\bm{J}}} 
\left\|\mat{M}\bmat{\bm{x}_{S \cup S_1}\\\bm{e}_{T \cup T_1}}\right\|_2^2 \\
&  = \frac{1}{1-\delta_{2\bm{J}}} 
\left<\mat{M}\bmat{\bm{x}_{S \cup S_1}\\\bm{e}_{T \cup T_1}},\mat{M} \bmat{\bm{x} \\\bm{e}}-\sum_{\ell \geq 2}\mat{M} \bmat{\bm{x}_{S_\ell}\\\bm{e}_{T_\ell}}\right>\\
& = \frac{1}{1-\delta_{2\bm{J}}}
\left( 
\left<\mat{M}\bmat{\bm{x}_{S \cup S_1}\\\bm{e}_{T \cup T_1}},\mat{M} \bmat{\bm{x} \\\bm{e}}\right>
 - \sum_{\ell \geq 2}\left<\mat{M}\bmat{\bm{x}_{S \cup S_1}\\\bm{e}_{T \cup T_1}},\mat{M}\bmat{\bm{x}_{S_\ell}\\\bm{e}_{T_\ell}}\right>\right)
\end{align*}
Then, employing agin the restricted isometry property, Lemma~\ref{lem:disj_bound}, and the Cauchy-Schwarz inequality, we see that
\begin{align*}
\left\|\bmat{\bm{x}_{S \cup S_1}\\\bm{e}_{T \cup T_1}}\right\|_2^2 
& \leq \frac{1}{1-\delta_{2\bm{J}}}	\left(
\left\|\mat{M}\bmat{\bm{x}_{S \cup S_1}\\\bm{e}_{T \cup T_1}}\right\|_2
\|\mat{A}\bm{x}+\bm{e}\|_2
 +\delta_{3\bm{J}}
\left\|\bmat{\bm{x}_{S  \cup S_1}\\\bm{e}_{T  \cup T_1}}\right\|_2
\sum_{\ell \geq 2}\left\|\bmat{\bm{x}_{S_\ell}\\\bm{e}_{T_\ell}}\right\|_2\right)
\\
& \leq \frac{\sqrt{1+\delta_{2\bm{J}}}}{1-\delta_{2\bm{J}}}
\left\|\bmat{\bm{x}_{S \cup S_1}\\ \bm{e}_{T \cup T_1}}\right\|_2
\|\mat{A}\bm{x}+\bm{e}\|_2
+\frac{\delta_{3\bm{J}}}{1-\delta_{2\bm{J}}}
\left\|\bmat{\bm{x}_{S \cup S_1} \\ \bm{e}_{T \cup T_1}}\right\|_2
\sum_{\ell \geq 2}\left\|\bmat{\bm{x}_{S_\ell}\\\bm{e}_{T_\ell}}\right\|_2.
\end{align*}
Dividing both sides by $\left\|\bmat{\bm{x}_{S \cup S_1}\\\bm{e}_{T \cup T_1}}\right\|_2$ and using the fact that $\delta_{2\bm{J}} \leq \delta_{3\bm{J}}$ yields
$$
\left\|\bmat{\bm{x}_{S}\\\bm{e}_T}\right\|_2 
\leq \left\|\bmat{\bm{x}_{S \cup S_1}\\\bm{e}_{T \cup T_1}}\right\|_2
\leq  
\frac{\sqrt{1+\delta_{3\bm{J}}}}{1-\delta_{3\bm{J}}}
\|\mat{A}\bm{x}+\bm{e}\|_2
+
\frac{\delta_{3\bm{J}}}{1-\delta_{3\bm{J}}}
\sum_{\ell \geq 2}\left\|\bmat{\bm{x}_{S_\ell}\\\bm{e}_{T_\ell}}\right\|_2.
$$
Thanks to the definition of the $S_\ell$'s, for every $j \in S_\ell$ and $k \in S_{\ell-1}$  with $\ell \geq 2$, we have  $|x_j|u_j^{-1} \leq|x_k|u_k^{-1}$. Moreover, let  $\alpha_k := u_k^2 / |S_{\ell-1}|_{\bm{u}}$. Then, $\sum_{k \in S_{\ell-1}}\alpha_k = 1$ and, for every $j \in S_\ell$ and $\ell \geq 2$, we see that
$$
\frac{|x_j|}{u_j} 
\leq\sum_{k \in S_{\ell-1}}\alpha_k\frac{|x_k|}{u_k}
= \frac{\|\bm{x}_{S_{\ell-1}}\|_{1,\bm{u}}}{|S_{\ell-1}|_{\bm{u}}} 
\leq \frac{\|\bm{x}_{S_{\ell-1}}\|_{1,\bm{u}}}{K-\|\bm{u}\|_{\infty}^2}.
$$
Therefore, for any $\ell \geq 2$, using that $K \geq \frac43 \|\bm{u}\|_\infty^2$, we have
$$
\|\bm{x}_{S_\ell}\|_2
\leq \sqrt{K}\frac{\|\bm{x}_{S_{\ell-1}}\|_{1,\bm{u}}}{K-\|\bm{u}\|_{\infty}^2}
\leq 
\frac{4 \|\bm{x}_{S_{\ell-1}}\|_{1,\bm{u}}}{\sqrt{K}}. 
$$
Using an analogous argument, we obtain
$\|\bm{e}_{T_\ell}\|_2 
\leq \frac{4 \|\bm{e}_{T_{\ell-1}}\|_{1,\bm{v}}}{\sqrt{H}}$ for every $\ell \geq 2$.

Therefore, we estimate
\begin{align*}
\left\|\bmat{\bm{x}_{S}\\\bm{e}_{T}}\right\|_2 
&\leq  \frac{\delta_{3\bm{J}}}{1-\delta_{3\bm{J}}}
\sum_{\ell \geq 2}\left\|\bmat{\bm{x}_{S_\ell}\\\bm{e}_{T_\ell}}\right\|_2
+\frac{\sqrt{1+\delta_{3\bm{J}}}}{1-\delta_{3\bm{J}}}
\|\mat{A}\bm{x}+\bm{e}\|_2
\\
&\leq 
\frac{\delta_{3\bm{J}}}{1-\delta_{3\bm{J}}}
\sum_{\ell \geq 2}\left(\|\bm{x}_{S_\ell}\|_2+\|\bm{e}_{T_\ell}\|_2\right)
+
\frac{\sqrt{1+\delta_{3\bm{J}}}}{1-\delta_{3\bm{J}}}
\|\mat{A}\bm{x}+\bm{e}\|_2\\
&\leq \frac{ 4\delta_{3\bm{J}}}{1-\delta_{3\bm{J}}}
\sum_{\ell \geq 2}\left(\frac{\|\bm{x}_{S_{\ell-1}}\|_{1,\bm{u}}}{\sqrt{K}}+\lambda\frac{\|\bm{e}_{T_{\ell-1}}\|_{1,\bm{v}}}{\lambda\sqrt{H}}\right)
+\frac{\sqrt{1+\delta_{3\bm{J}}}}{1-\delta_{3\bm{J}}}
\|\mat{A}\bm{x}+\bm{e}\|_2\\
& = \frac{4\delta_{3\bm{J}}}{1-\delta_{3\bm{J}}}\left(\frac{\|\bm{x}_{S^c}\|_{1,\bm{u}}}{\sqrt{K}}
+\frac{\lambda\|\bm{e}_{T^c}\|_{1,\bm{v}}}{\lambda\sqrt{H}}\right)
+\frac{\sqrt{1+\delta_{3\bm{J}}}}{1-\delta_{3\bm{J}}}
\|\mat{A}\bm{x}+\bm{e}\|_2\\
& \leq \frac{4\delta_{3\bm{J}}}{1-\delta_{3\bm{J}}}
\max{\left\{\frac{1}{\sqrt{K}},\frac{1}{\lambda\sqrt{H}}\right\}}
\left\|\bmat{\bm{x}_{S^c}\\ \bm{e}_{T^c}}\right\|_{1,\bm{w}_\lambda}
+\frac{\sqrt{1+\delta_{3\bm{J}}}}{1-\delta_{3\bm{J}}}
\left\|\mat{M} \bmat{\bm{x} \\ \bm{e}}\right\|_2.
\end{align*}
Now, observing that
$$
\max\left\{\frac{1}{\sqrt{K}}, \frac{1}{\lambda\sqrt{H}}\right\}
= \frac{\Theta}{\sqrt{H + \lambda^2 K}},
$$
we have shown the 2-level null-space property with constants $\rho$ and $\tau$ defined by \eqref{thm:2-levRIP->2-levNSP:eq:rho_tau}. For $\rho$ to be strictly smaller than $1$, we need condition \eqref{thm:2-levRIP->2-levNSP:eq:cond_delta}. This concludes the proof.
\end{proof}

\begin{rmrk}
The constant 4/3 in the hypothesis $\bm{J} \geq \frac43(\|\bm{u}\|_\infty^2,\|\bm{v}\|_\infty^2)$ of Theorem~\ref{thm:2-levRIP->2-levNSP} can be made arbitrarily close to 1. This particular choice has been made in view of Lemma~\ref{lem:norm_u_inf_bound}.
\end{rmrk}

Finally, we establish the 2-level weighted restricted isometry property for the augmented matrix $\bmat{\mat{A} & \mat{I}}$ with high probability provided a suitable lower bound for the sample complexity $m$.
\begin{thm}[Sample complexity $\Rightarrow$ 2-level weighted restricted isometry property]
\label{thm:sample->2-levRIP}
Let $\bm{J} = (K(s), H)$, $0 < \delta < 1$, and suppose that
$$
m \gtrsim K(s) \max\{L(s,n,\delta,\varepsilon), \delta^{-2} H\}
$$
Then, with probability at least $1-\varepsilon$, the matrix $\bmat{\mat{A} & \mat{I}}$ has the  2-level weighted restricted isometry property of order $\bm{J}$ with weights $(\bm{u}_\Lambda, \bm{v})$ and constant $\delta_{\bm{J}} \leq \delta$.
\end{thm}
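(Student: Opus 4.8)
The plan is to reduce the 2-level weighted restricted isometry property of $\mat{M} = \bmat{\mat{A} & \mat{I}}$ to the one-level \emph{lower} restricted isometry property of $\mat{A}$ supplied by Theorem~\ref{thm:sampleRIP}, together with a deterministic control of the cross term between the two blocks. Writing an admissible vector as $\bm{z} = \bmat{\bm{x}\\ \bm{e}}$ with $|\supp(\bm{x})|_{\bm{u}} \leq K(s)$ and $|\supp(\bm{e})|_{\bm{v}} \leq H$ (so that, since $v_i \geq 1$, the corruption block additionally satisfies $|\supp(\bm{e})| \leq H$), I would expand
$$
\bigl| \|\mat{M}\bm{z}\|_2^2 - \|\bm{z}\|_2^2 \bigr|
= \bigl| (\|\mat{A}\bm{x}\|_2^2 - \|\bm{x}\|_2^2) + 2\,\mathrm{Re}\langle \mat{A}\bm{x}, \bm{e}\rangle \bigr|
\leq \delta_{s,L}\|\bm{x}\|_2^2 + 2\,|\langle \mat{A}\bm{x}, \bm{e}\rangle|,
$$
where the first term is bounded by the lower restricted isometry constant of $\mat{A}$ precisely because $|\supp(\bm{x})|_{\bm{u}} \leq K(s)$. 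The key structural point is that the second diagonal block coming from the identity $\mat{I}$ contributes exactly $\|\bm{e}\|_2^2 - \|\bm{e}\|_2^2 = 0$ and thus drops out entirely.

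The heart of the argument is the cross term. Setting $S_1 = \supp(\bm{x})$ and $S_2 = \supp(\bm{e})$, I would bound $|\langle \mat{A}\bm{x}, \bm{e}\rangle| \leq \|(\mat{A}_{S_1})_{S_2}\|_{2\to2}\,\|\bm{x}\|_2\|\bm{e}\|_2$, where $(\mat{A}_{S_1})_{S_2}$ is the submatrix of $\mat{A}$ with columns in $S_1$ and rows in $S_2$. The crucial observation is that this off-diagonal block can be controlled \emph{deterministically}: using $|A_{ij}|^2 = \tfrac1m |\phi_{\bm{i}_j}(\bm{t}_i)|^2 \leq \tfrac1m \|\phi_{\bm{i}_j}\|_{L^\infty}^2 = \tfrac1m u_{\bm{i}_j}^2$, together with $\sum_{j \in S_1} u_{\bm{i}_j}^2 = |S_1|_{\bm{u}} \leq K(s)$ and $|S_2| \leq H$, the Frobenius norm gives
$$
\|(\mat{A}_{S_1})_{S_2}\|_{2\to2}^2 \leq \|(\mat{A}_{S_1})_{S_2}\|_F^2 = \frac1m \sum_{i\in S_2}\sum_{j \in S_1} |\phi_{\bm{i}_j}(\bm{t}_i)|^2 \leq \frac{H\,K(s)}{m}.
$$
Combined with $2\|\bm{x}\|_2\|\bm{e}\|_2 \leq \|\bm{z}\|_2^2$, this yields the clean estimate $\bigl| \|\mat{M}\bm{z}\|_2^2 - \|\bm{z}\|_2^2 \bigr| \leq \bigl(\delta_{s,L} + \sqrt{H K(s)/m}\bigr)\|\bm{z}\|_2^2$.

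To finish, I would invoke Theorem~\ref{thm:sampleRIP} with target constant $\delta/2$, so that $\delta_{s,L} \leq \delta/2$ holds with probability at least $1-\varepsilon$ under $m \gtrsim K(s)L(s,n,\delta/2,\varepsilon) \asymp K(s)L(s,n,\delta,\varepsilon)$, and choose the hidden constant in the hypothesis so that $m \geq 4HK(s)/\delta^2$ forces $\sqrt{HK(s)/m} \leq \delta/2$. Both requirements are absorbed into $m \gtrsim K(s)\max\{L(s,n,\delta,\varepsilon), \delta^{-2}H\}$. Since the Frobenius bound is uniform over \emph{all} admissible pairs $(S_1,S_2)$, while the lower restricted isometry estimate is uniform over $S_1$ with probability $\geq 1-\varepsilon$, the resulting inequality $\delta_{\bm{J}} \leq \delta$ holds for every admissible $\bm{z}$ simultaneously. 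I expect the only genuinely delicate point to be recognizing that the $\mat{I}$-block decouples and that the cross term requires no fresh randomness: it is tamed entirely by the normalization $u_{\bm{i}} = \|\phi_{\bm{i}}\|_{L^\infty}$ and the weighted sparsity $|S_1|_{\bm{u}} \leq K(s)$, which is exactly where the extra factor $\delta^{-2}H$ in the sample complexity originates.
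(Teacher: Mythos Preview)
Your proposal is correct and follows essentially the same approach as the paper: the paper's proof also reduces to the one-level lower restricted isometry property of $\mat{A}$ via Theorem~\ref{thm:sampleRIP}, combined with the identical deterministic cross-term bound $|\langle \mat{A}\bm{x},\bm{e}\rangle| \leq \sqrt{K(s)H/m}\,\|\bm{x}\|_2\|\bm{e}\|_2$ (which the paper obtains by citing \cite[Lemma~3.14 and Theorem~3.15]{Adcock2017compressed} rather than writing out the Frobenius argument). Your splitting into two $\delta/2$ contributions and the observation that $L(s,n,\delta/2,\varepsilon)\asymp L(s,n,\delta,\varepsilon)$ makes explicit what the paper leaves implicit in the $\gtrsim$ notation.
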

\begin{proof}
First, notice that \cite[Lemma 3.14]{Adcock2017compressed} can  be easily adapted to the weighted case by noticing that $|\left<\mat{A}\bm{x}, \bm{e}\right>| \leq \sqrt{\frac{K(s) H}{m}} \|\bm{x}\|_2 \|\bm{e}\|_2$,  due to the assumption $\bm{v}\geq 1$. As a consequence, \cite[Theorem 3.15]{Adcock2017compressed} admits an analogous generalization to the weighted case. The thesis now follows from Theorem~\ref{thm:sampleRIP}. 
\end{proof}

Finally, we introduce the concept of  quasi-best weighted $K$-term approximation, introduced and studied in \cite{Rauhut2016}.
\begin{defn}[Quasi-best weighted $K$-term approximation]
\label{def:quasi-best}
Given $\bm{z} \in \mathbb{C}^n$, the corresponding \emph{weighted quasi-best $K$-term approximation error} is defined as
$$
\tilde{\sigma}_{K}(\bm{z})_{p,\bm{w}}=\|\bm{z}-\bm{z}_S\|_{p,\bm{w}},
$$
where $S=\{\pi(1),\ldots,\pi(s)\}$ where $s$ is the maximal number such that $\sum_{j=1}^s w^2_{\pi(j)}\leq K$, and $\pi$ is some permutation such that $|z_{\pi(j)}|^p w_j^{-p} \geq |z_{\pi(j+1)}|^p w_{j+1}^{-p}$ for every $j \in [n]$.

\end{defn}

Equipped with the technical elements discussed above, we are now ready to prove the robustness of WLAD-LASSO under unknown error.

\subsubsection{Robustness of WLAD-LASSO}
\label{sec:WLADLASSO}

We prove robust recovery error estimates for the WLAD-LASSO decoder \eqref{eq:WLADLASSOweighted} under unknown error.

\begin{thm}[WLAD-LASSO recovery estimate]
\label{thm:WLADLASSOrecovery}
Let $0 < \varepsilon < 1$, $2 \leq s \leq 2^{d+1}$, $H >0$, $\Lambda = \Lambda^{\text{HC}}_{d,s}$ be the hyperbolic cross index set defined in \eqref{eq:defHC}, and $\{\phi_{\bm{i}}\}_{\bm{i} \in \mathbb{N}_0^d}$ be the tensor Legendre or Chebyshev polynomial basis. Let $0 < \delta < 1$ be such that
$$
\delta \leq \frac{1}{1+4 \Theta}, \quad \text{with }\Theta=\frac{\sqrt{K(s)+\lambda^2 H}}{\min{\{\sqrt{K(s)},\lambda \sqrt{H}\}}}.
$$
Draw $\bm{t}_1,\ldots,\bm{t}_m$ independently according to the corresponding measure $\nu$, with 
\begin{equation}\label{thm:WLADLASSOrecovery:eq:samplecomplex}
m \gtrsim s^{\gamma/2} \max\{L(s,n,\delta,\varepsilon), \delta^{-2} H\}
\end{equation}
where $\gamma$ is defined as in \eqref{eq:defgamma} and $L(s,n,\delta,\varepsilon)$ is the polylogarithmic factor defined in \eqref{eq:defpolylog}. Then, the following holds with probability at least $1-\varepsilon$. For any $f \in L^2_\nu(D) \cap L^\infty(D)$ expanded as in \eqref{eq:fexpansion}, the approximation $\tilde{f}$ defined as in \eqref{eq:ftilde} computed using the WLAD-LASSO decoder \eqref{eq:WLADLASSOweighted} satisfies
\begin{equation}
\label{thm:WLADLASSOrecovery:eq:Linfest}
\|f-\tilde{f}\|_{L^\infty} + \lambda \|\bm{e}-(\bm{y}-\mat{A}\hat{\bm{x}}_\Lambda)\|_{1,\bm{v}}
\leq C_1 \left(\sigma_{s,L}(\bm{x})_{1,\bm{u}} + \lambda \sigma_H(\bm{e})_{1,\bm{v}}\right).
\end{equation}
and, additionally assuming $H\geq 2\|\bm{v}\|_{\infty}^2$,
\begin{equation}
\label{thm:WLADLASSOrecovery:eq:L2est}
\|f-\tilde{f}\|_{L^2_\nu} + \|\bm{e}-(\bm{y}-\mat{A}\hat{\bm{x}}_\Lambda)\|_{2}
\leq C_2 (1+\sqrt{\Theta})\left ( \frac{ \sigma_{s,L}(\bm{x})_{1,\bm{u}}}{s^{\gamma/2}}+ \frac{\sigma_H(\bm{e})_{1,\bm{v}}}{\sqrt{H}} \right )+ \|f-f_{\Lambda}\|_{L^2_\nu},
\end{equation}
where $f_\Lambda$ is defined as in \eqref{eq:fLambda} and the constants are 
$$
C_1 = \frac{3+\rho}{1-\rho}, \quad 
C_2 = \frac{4\sqrt{2}(1+\rho)\max\{\rho,2\sqrt{\rho}\}}{1-\rho},
$$
where $\rho = 4\delta / (1-\delta)$, $\tau = \sqrt{1-\delta}/(1-\delta)$.
\end{thm}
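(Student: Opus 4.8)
The plan is to follow the roadmap of Remark~\ref{rem:proof_startegy} applied to the augmented basis pursuit reformulation \eqref{eq:WLADLASSO_aug}, in which $\bmat{\hat{\bm{x}}_\Lambda\\\hat{\bm{e}}}$ minimizes $\|\cdot\|_{1,\bm{w}_\lambda}$ subject to $\mat{M}\bm{z} = \bm{y}$, with $\mat{M} = \bmat{\mat{A} & \mat{I}}$ and $\bm{w} = \bmat{\bm{u}\\\bm{v}}$. Setting $\bm{J} = (K(s), H)$, the sample complexity hypothesis \eqref{thm:WLADLASSOrecovery:eq:samplecomplex} (which controls $K(s)\max\{L,\delta^{-2}H\}$ since $K(s)\asymp s^\gamma$ by Lemma~\ref{lem:K(s)bounds}) lets me invoke Theorem~\ref{thm:sample->2-levRIP}, applied at order $3\bm{J}$, to obtain with probability at least $1-\varepsilon$ the $2$-level weighted restricted isometry property of $\mat{M}$ with $\delta_{3\bm{J}} \le \delta$. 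Since $\delta \le 1/(1+4\Theta)$ by hypothesis, Theorem~\ref{thm:2-levRIP->2-levNSP} then yields the $2$-level weighted robust null space property of order $\bm{J}$ with $\rho = 4\delta_{3\bm{J}}\Theta/(1-\delta_{3\bm{J}})$ and $\tau = \sqrt{1+\delta_{3\bm{J}}}/(1-\delta_{3\bm{J}})$; its admissibility conditions $K(s) \ge \tfrac43\|\bm{u}_\Lambda\|_\infty^2$ and $H \ge \tfrac43\|\bm{v}\|_\infty^2$ hold by Lemma~\ref{lem:norm_u_inf_bound} and the running assumptions on $H$ and $\bm v$.

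The $L^\infty$ estimate follows almost directly from the distance bound. I apply Theorem~\ref{thm:2-lev_dist_bound} to $\bm{z} = \bmat{\bm{x}_\Lambda\\\bm{e}}$ and $\hat{\bm{z}} = \bmat{\hat{\bm{x}}_\Lambda\\\hat{\bm{e}}}$: both are feasible for \eqref{eq:WLADLASSO_aug} since $\mat{M}\bm{z} = \mat{A}\bm{x}_\Lambda + \bm{e} = \bm{y}$ by \eqref{eq:linsys}, so $\mat{M}(\bm{z}-\hat{\bm{z}}) = \bm{0}$ and, by optimality of $\hat{\bm z}$, $\|\hat{\bm{z}}\|_{1,\bm{w}_\lambda} - \|\bm{z}\|_{1,\bm{w}_\lambda} \le 0$. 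Hence $\|\bm{z}-\hat{\bm{z}}\|_{1,\bm{w}_\lambda} \le \tfrac{2(1+\rho)}{1-\rho}\,\sigma_{\bm{J}}(\bm{z})_{1,\bm{w}_\lambda}$. The best $\bm{J}$-term error decouples across blocks, $\sigma_{\bm{J}}(\bm{z})_{1,\bm{w}_\lambda} = \sigma_{K(s)}(\bm{x}_\Lambda)_{1,\bm{u}} + \lambda\,\sigma_{H}(\bm{e})_{1,\bm{v}} \le \sigma_{s,L}(\bm{x})_{1,\bm{u}} + \lambda\,\sigma_{H}(\bm{e})_{1,\bm{v}}$, using $\sigma_{K(s)}(\cdot)_{1,\bm{u}} \le \sigma_{s,L}(\cdot)_{1,\bm{u}}$. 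Writing $\hat{\bm{e}} = \bm{y}-\mat{A}\hat{\bm{x}}_\Lambda$ and bounding $\|f-\tilde{f}\|_{L^\infty} \le \|\bm{x}-\hat{\bm{x}}_\Lambda\|_{1,\bm{u}} \le \sigma_{s,L}(\bm{x})_{1,\bm{u}} + \|\bm{x}_\Lambda - \hat{\bm{x}}_\Lambda\|_{1,\bm{u}}$, then adding $\lambda\|\bm{e}-\hat{\bm{e}}\|_{1,\bm{v}}$, the left-hand side of \eqref{thm:WLADLASSOrecovery:eq:Linfest} is at most $\sigma_{s,L}(\bm{x})_{1,\bm{u}} + \|\bm{z}-\hat{\bm{z}}\|_{1,\bm{w}_\lambda}$; collecting coefficients (and using $\rho<1$) gives the constant $C_1 = (3+\rho)/(1-\rho)$.

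For the $L^2_\nu$ estimate I must convert the $\ell^1_{\bm{w}_\lambda}$ control of $\bm{\Delta} := \bm{z}-\hat{\bm{z}}$ into an $\ell^2$ bound on each block, since $\|f-\tilde{f}\|_{L^2_\nu} \le \|f-f_\Lambda\|_{L^2_\nu} + \|\bm{x}_\Lambda - \hat{\bm{x}}_\Lambda\|_2$ while the recovered-error term is $\|\bm{e}-\hat{\bm{e}}\|_2$, and together these are at most $\sqrt{2}\,\|\bm{\Delta}\|_2$. I would choose block supports $S_1,S_2$ of the weighted quasi-best approximations (Definition~\ref{def:quasi-best}), so that by the null space property (with the $\tau\|\mat{M}\bm{\Delta}\|_2$ term vanishing) the head satisfies $\|\bm{\Delta}_{S_1\cup S_2}\|_2 \le \tfrac{\rho}{\sqrt{J_\lambda}}\|\bm{\Delta}_{(S_1\cup S_2)^c}\|_{1,\bm{w}_\lambda}$. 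The delicate point is the tail. A crude use of the weighted Stechkin inequality (Lemma~\ref{lem:stechkin}, via $\|\bm{u}_\Lambda\|_\infty^2 \le \tfrac34 K(s)$ and $H \ge 2\|\bm{v}\|_\infty^2$) bounds $\|\bm{\Delta}_{(S_1\cup S_2)^c}\|_2$ by the $\ell^1_{\bm{w}_\lambda}$ norm weighted by $\max\{1/\sqrt{K(s)},\,1/(\lambda\sqrt{H})\} = \Theta/\sqrt{J_\lambda}$, but this would only give a factor \emph{linear} in $\Theta$. To reach the sharp, sub-linear factor $C_2(1+\sqrt{\Theta})$ I must instead reuse the sorting/block-decomposition argument inside the proof of Theorem~\ref{thm:2-levRIP->2-levNSP}, controlling the tail $\ell^\infty$ content by the head $\ell^2$ norm and closing the estimate through a Young-type inequality; this is precisely what converts the leading $\rho$ into $\max\{\rho,2\sqrt{\rho}\}$ and produces the square-root dependence, and it is the main obstacle of the proof. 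Finally, using $\sqrt{J_\lambda} \ge \max\{\sqrt{K(s)},\,\lambda\sqrt{H}\}$ to pass from $\sigma_{\bm{J}}(\bm{z})_{1,\bm{w}_\lambda}/\sqrt{J_\lambda}$ to $\sigma_{s,L}(\bm{x})_{1,\bm{u}}/\sqrt{K(s)} + \sigma_H(\bm{e})_{1,\bm{v}}/\sqrt{H}$, together with $K(s) \ge s^\gamma/4$ (Lemma~\ref{lem:K(s)bounds}) to replace $\sqrt{K(s)}$ by $s^{\gamma/2}$, yields \eqref{thm:WLADLASSOrecovery:eq:L2est} with the stated $C_2$.
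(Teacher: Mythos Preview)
Your proposal is correct and follows essentially the same route as the paper: establish the 2-level weighted RIP via Theorem~\ref{thm:sample->2-levRIP}, pass to the 2-level robust NSP via Theorem~\ref{thm:2-levRIP->2-levNSP}, apply the distance bound Theorem~\ref{thm:2-lev_dist_bound} to the augmented pair $\bm{z}=\bmat{\bm{x}_\Lambda\\\bm{e}}$, $\hat{\bm{z}}=\bmat{\hat{\bm{x}}_\Lambda\\\hat{\bm{e}}}$ (with $\mat{M}(\bm{z}-\hat{\bm{z}})=0$ and optimality killing the two extra terms) to get the $L^\infty$ estimate, and then upgrade to $\ell^2$ via a quasi-best head/tail decomposition as in Definition~\ref{def:quasi-best}.

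One small terminological correction for the $L^2_\nu$ step: the paper does not close the tail estimate with a Young-type inequality. Writing $\bm{g}=\bm{z}-\hat{\bm{z}}$ and $\theta=\max\{\min_{i\in S}|d_i|/u_i,\ \lambda^{-1}\min_{j\in T}|f_j|/v_j\}$, it uses the H\"older-type bound $\|\bm{g}_{(S\cup T)^c}\|_2^2 \le \theta\,\|\bm{g}_{(S\cup T)^c}\|_{1,\bm{w}_\lambda}$, then an averaging argument (the same convexity trick as in the proof of Theorem~\ref{thm:2-levRIP->2-levNSP}) to get $\theta \lesssim \|\bm{g}_{S\cup T}\|_2/\min\{\sqrt{K(s)},\lambda\sqrt{H}\}$, and finally the NSP on the head. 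Chaining these three inequalities directly yields $\|\bm{g}_{(S\cup T)^c}\|_2 \le 2\sqrt{\rho\Theta}\,\beta$ with $\beta=\|\bm{g}_{(S\cup T)^c}\|_{1,\bm{w}_\lambda}/\sqrt{J_\lambda}$, which together with $\|\bm{g}_{S\cup T}\|_2\le\rho\beta$ gives the $\max\{\rho,2\sqrt{\rho}\}(1+\sqrt{\Theta})$ factor you identified. Your description of the mechanism and its role is otherwise accurate.
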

\begin{proof}
First, we observe that the sample complexity lower bound \eqref{thm:WLADLASSOrecovery:eq:samplecomplex} is sufficient to guarantee the  2-level weighted restricted isometry property and  the 2-level weighted robust null space property  for the matrix $\bmat{\mat{A} & \mat{I}}$ with probability at least $1-\varepsilon$, due to Theorems~\ref{thm:sample->2-levRIP} \& \ref{thm:2-levRIP->2-levNSP}. 

Now, considering the augmented formulation \eqref{eq:WLADLASSO_aug} of WLAD-LASSO, we apply Theorem~\ref{thm:2-lev_dist_bound} with $\mat{M} = \bmat{\mat{A}& \mat{I}}$, $\bm{z} = \bmat{\bm{x}_\Lambda\\ \bm{e}}$, and $\hat{\bm{z}} = \bmat{\hat{\bm{x}}_\Lambda \\ \hat{\bm{e}}}$. We obtain
\begin{equation}
\label{thm:WLADLASSOrecovery:eq:l1_err_bound}
\|\bm{x}_\Lambda - \hat{\bm{x}}_\Lambda\|_{1,\bm{u}}
+ \lambda \|\bm{e} - \hat{\bm{e}}\|_{1,\bm{v}}
\leq \frac{2(1+\rho)}{1-\rho} \left(\sigma_{s,L}(\bm{x})_{1,\bm{u}} + \lambda \sigma_H(\bm{e})_{1,\bm{v}}\right),
\end{equation}
which, for reasons analogous to  those of the previous robust recovery theorems, implies \eqref{thm:WLADLASSOrecovery:eq:Linfest}.\footnote{Notice that since $\mat{M}(\bm{z}-\hat{\bm{z}}) = \bm{0}$ the constant $\tau$ does not appear in \eqref{thm:WLADLASSOrecovery:eq:l1_err_bound}. Indeed, we are just using a 2-level weighted version of the so-called \emph{stable} null space property (see \cite[\S 4.2]{Foucart2013}).}

As a second step, we now convert the weighted $\ell^1$ error estimate \eqref{thm:WLADLASSOrecovery:eq:l1_err_bound} into an $L^2_\nu$ error estimate as usual. This will require some extra work in this case. 

Recalling Definition~\ref{def:quasi-best}, let $\bm{d} := \bm{x}_\Lambda - \hat{\bm{x}}_\Lambda$ and $\bm{f} := \bm{e} - \hat{\bm{e}}$. Let $S$ and $T$ be such that $\|\bm{d}-\bm{d}_S\|_{1,\bm{u}}=\|\bm{d}_{S^c}\|_{1,\bm{u}}=\tilde{\sigma}_{K(s)}(\bm{d})_{1,\bm{u}}$ and  $\|\bm{f}-\bm{f}_S\|_{1,\bm{v}}=\|\bm{f}_{T^c}\|_{1,\bm{v}}=\tilde{\sigma}_H(\bm{f})_{1,\bm{v}}$. Moreover, using the 2-level notation of Definition~\ref{def:2-lev_notation}, define $\bm{g} := \bmat{\bm{d}\\\bm{f}}$ and $\bm{J}:=(K(s),H)$. Recalling \eqref{eq:defwJlambda}, let $\bm{w}_\lambda := \bmat{\bm{u}\\ \lambda \bm{v}}$, $J_\lambda := K(s) + \lambda^2 H$. Moreover, define
$$
\theta_{\bm{d}}:=\min_{i \in S}{\frac{|d_i|}{u_i}}, \quad
\theta_{\bm{f}}:=\min_{j \in T}{\frac{|f_j|}{v_j}}, \quad
\theta:=\max\left\{\theta_{\bm{d}},\frac{\theta_{\bm{f}}}{\lambda}\right\}.
$$
Using the fact that $S$ and $T$ realize the quasi-best approximation error, we estimate
\begin{align}
\|\bm{g}_{(S \cup T)^c}\|^2_{2} &= \sum_{i \notin S} |d_i|^2 + \sum_{j \notin T} |f_j|^2 
\leq \theta_{\bm{d}}\sum_{i \notin S}u_i|d_i|+\theta_{\bm{f}} \sum_{j \notin T} v_j|f_j| \\
&\leq \theta\left(\sum_{i \notin S}u_i|d_i|+\lambda\sum_{j \notin T}v_j|f_j|\right)
= \theta \|\bm{g}_{(S \cup T)^c}\|_{1,\bm{w}_\lambda}
\label{thm:WLADLASSOrecovery:eq:norm_g_SUT}
\end{align}
Similarly to the proof of Theorem~\ref{thm:2-levRIP->2-levNSP}, let $\alpha_k :=u_k^2 / |S|_{\bm{u}}$. Notice that $\alpha_k \leq (K(s)-\|\bm{u}_\Lambda\|_{\infty}^2)^{-1}u_k^2$ (due to the maximality of $S$, realizing the quasi-best term approximation) and that $\sum_{k \in S}\alpha_k=1$. Now we observe that 
\begin{align*}
\theta_{\bm{d}}^2 
= \sum_{i \in S}{\alpha_i\min_{k \in S}{\left(\frac{|d_k|}{u_k}\right)^{2}}} 
\leq \sum_{i \in S}{\alpha_i\left(\frac{|d_i|}{u_i}\right)^{2}} 
\leq \frac{1}{K(s)-\|\bm{u}_\Lambda\|_{\infty}^2}
\sum_{i \in S}u_i^2\left(\frac{|d_i|}{u_i}\right)^{2}
= \frac{1}{K(s)-\|\bm{u}_\Lambda\|_{\infty}^2}
\|\bm{d}_S\|_{2}^2.
\end{align*}
Recalling that $\frac{3}{4}K(s) \geq \|\bm{u}_\Lambda\|_{\infty}^2$ (due to Lemma~\ref{lem:norm_u_inf_bound}), we get
$\theta_{\bm{d}}\leq \frac{2}{\sqrt{K(s)}}\|\bm{d}_S\|_{2}$. Similarly, we can deduce that $\theta_{\bm{f}} \leq \frac{\sqrt{2}}{\sqrt{H}}\|\bm{f}_T\|_{2}$ thanks to the hypothesis $H \geq 2 \|\bm{v}\|_\infty^2 $. Therefore, using the 2-level weighted robust null space property, we obtain
\begin{align*}
\theta 
=\max{\left\{\theta_{\bm{d}},\frac{\theta_{\bm{f}}}{\lambda}\right\}} 
\leq \frac{2\left(\|\bm{d}_S\|_{2}+\|\bm{f}_T\|_{2}\right) }{\min{\{\sqrt{K(s)},\lambda\sqrt{H}\}}} \leq \frac{2\sqrt{2}\|\bm{g}_{S \cup T}\|_2}{\min{\{\sqrt{K(s)},\lambda\sqrt{H}\}}} 
\leq \frac{2\sqrt{2}\rho\| \bm{g}_{(S \cup T)^c}\|_{1,\bm{w}_{\lambda}}}{\min{\{\sqrt{K(s)},\lambda\sqrt{H}\}} \sqrt{J_\lambda}}.
\end{align*}
Plugging the above inequality into \eqref{thm:WLADLASSOrecovery:eq:norm_g_SUT} and defining
$
\beta:=\frac{ \| \bm{g}_{(S \cup T)^c} \|_{1,\bm{w}_\lambda}}{\sqrt{J_\lambda}},
$ 
we deduce that
$$
\|\bm{g}_{(S \cup T)^c}\|^2_{2} 
\leq \theta \|\bm{g}_{(S \cup T)^c}\|_{1,\bm{w}_\lambda}
\leq  2\sqrt{2}\rho \Theta  \beta^2 \leq 4 \rho \Theta \beta^2.
$$
Using the above estimate and the weighted 2-level robust null space property again, we see that 
\begin{align*}
\tfrac{1}{\sqrt{2}}\left(\| \bm{d} \|_{2} + \| \bm{f}\|_{2}\right) 
&\leq \| \bm{g}\|_2 \leq \|\bm{g}_{S \cup T}\|_2 + \|\bm{g}_{(S \cup T)^c}\|_2\\ 
&\leq \frac{\rho}{\sqrt{J_\lambda}} \|\bm{g}_{(S \cup T)^c}\|_{1,\bm{w}_\lambda} + \|\bm{g}_{(S \cup T)^c}\|_2
\\
& \leq \rho \beta +2\sqrt{\rho\Theta} \beta
\leq \max\{\rho,2\sqrt{\rho}\}\left(1+\sqrt{\Theta}\right)\beta.
\end{align*}
Now, taking advantage of \eqref{thm:WLADLASSOrecovery:eq:l1_err_bound}, we have  
\begin{align*}
\beta  
\leq \frac{ \| \bm{g}\|_{1,\bm{w}}}{\sqrt{J_\lambda}} 
 \leq \frac{2(1+\rho)}{1-\rho} \frac{\sigma_{s,L}(\bm{x})_{1,\bm{u}} + \lambda \sigma_H(\bm{e})_{1,\bm{v}}}{\sqrt{K(s) + \lambda^2 H}} \leq \frac{2(1+\rho)}{1-\rho} \left(\frac{\sigma_{s,L}(\bm{x})_{1,\bm{u}}}{\sqrt{K(s)}}+ \lambda \frac{\sigma_{H}(\bm{e})_{1,\bm{v}}}{\sqrt{H}}\right),
\end{align*}
The above inequalities, combined with Lemma~\ref{lem:K(s)bounds} and with the inequality $\|f-\tilde{f}\|_{L^2_\nu} \leq \|f-f_{\Lambda}\|_{L^2_\nu} + \|\bm{d}\|_2$ yield  \eqref{thm:WLADLASSOrecovery:eq:L2est}. This concludes the proof.
\end{proof}

\section{Conclusions}
\label{sec:concl}

In the context of sparse high-dimensional approximation from pointwise samples, we have considered four decoders for weighted $\ell^1$ minimization: weighted quadratically-constrained basis pursuit, weighted LASSO, weighted square-root LASSO, and weighted LAD-LASSO. We have compared these decoders from the  theoretical and the numerical perspectives, focusing on the case where the samples are corrupted by unknown error (such as truncation, discretization, and numerical error). 

On the theoretical side, we have proved uniform robust recovery guarantees for all the decoders considered, showing that they achieve the best $s$-term approximation error in lower sets, provided a suitable lower bound on the sample complexity and up to the error level (see \S\ref{sec:rec_err} and Theorems~\ref{thm:WQCBPerror-blind}, \ref{thm:WLASSOrecovery}, \ref{thm:WSRLASSOrecovery}, and \ref{thm:WLADLASSOrecovery}). Our analysis suggests optimal strategies for the choice of the respective tuning parameters (see equations \eqref{eq:optimaleta}, \eqref{eq:lambdaWLASSO}, \eqref{eq:lambdaWSRLASSO}, and \eqref{eq:lambdaWLADLASSO}). 

From the numerical viewpoint, we have compared the decoders' performance on a synthetic example, were $f$ is explicitly available (\S\ref{sec:par_vs_err} \& \ref{sec:m_vs_err}), and on a more applicable case, where the function is defined as the quantity of interest of a parametric ODE or PDE with random inputs (\S\ref{sec:UQ}). The first set of experiments corroborates the robustness and the reliability of the decoders under examination and confirms the optimality of the choices of the tuning parameters suggested by the theory.  Conversely, for the latter examples, our numerical results suggest that careful parameter tuning may not always be needed, possibly due to the structured nature of the error arising in these types of problems (see \S \ref{ss:discussion}).  A proper theoretical understanding of this phenomenon is still an open issue. Moreover, devising suitable strategies to incorporate some \emph{a priori} information on $f$ to the proposed approximation scheme is another open problem. For example, when $f$ is known to have more variability along certain axial directions than others, one might replace the hyperbolic cross with some anisotropic set $\Lambda$ or modify the weights $\bm{u}$ used in the decoders. Analyzing these issues is beyond the scope of this paper and it is left to future work.

%

Finally, we remark the following.  In light of the comparison made in this paper, we suggest that the square-root LASSO should be the decoder of choice out of those studied (apart from the case of sparse corruptions, where the best choice is of course the weighted LAD-LASSO). Indeed, this decoder has the same theoretical guarantees as the other ones and exhibits comparable (and sometimes better) numerical performance. Yet, the optimal choice of its tuning parameter does not depend on the unknown error. Although this fact is well-known in the statistics community, we think that it has not been fully exploited in high-dimensional function approximation and in compressed sensing. We hope that this paper could convince the reader to consider the square-root LASSO as a valid alternative to quadratically-constrained basis pursuit and LASSO in their future investigations.

\section*{Acknowledgements}
BA, AB and SB acknowledge the Natural Sciences and Engineering Research Council of Canada through grant 611675 and the Alfred P. Sloan Foundation and the Pacific Institute for the Mathematical Sciences (PIMS) Collaborative Research Group ``High-Dimensional Data Analysis''. SB acknowledges the support of the PIMS Post-doctoral Training Center in Stochastics. The authors are grateful to Claire Boyer, John Jakeman, Richard Lockhart, Akil Narayan, and Clayton G. Webster for interesting and fruitful discussions.



\small
\bibliographystyle{plain}
\bibliography{library}

\end{document}